\documentclass[12pt,reqno]{amsart}

\usepackage{mathabx}
\DeclareMathAlphabet{\mathsl}{OT1}{cmss}{m}{sl}
\SetMathAlphabet{\mathsl}{bold}{OT1}{cmss}{bx}{sl}

\DeclareFontEncoding{LS1}{}{}
\DeclareFontSubstitution{LS1}{stix}{m}{n}
\DeclareMathAlphabet{\mathscr}{LS1}{stixscr}{m}{n} 

\usepackage{geometry}\geometry{a4paper,top=20mm,left=18mm,right=18mm,bottom=15mm,headsep=2mm,footskip=8mm}

\usepackage{currfile,datetime}
\usepackage[sort,compress]{cite}

\usepackage{mathrsfs}

\usepackage{amsthm,amsmath,amssymb,bbm,geometry,epsfig,listings,
	paralist,enumerate,comment,nicefrac,verbatim,multirow,xcolor,wasysym,tikz}
\usepackage[pagewise]{lineno}[4.41]
\usepackage{marginnote}\reversemarginpar

\usepackage[colorlinks,linkcolor=red,citecolor=red]{hyperref}

\usepackage[capitalise,compress]{cleveref} 


\crefname{equation}{}{}

\newtheorem{lemma}{Lemma}[section]
\newtheorem{remark}[lemma]{Remark}

\newtheorem{theorem}[lemma]{Theorem}

\newtheorem{setting}[lemma]{Setting}

\crefname{subsection}{Subsection}{Subsections}
\crefname{enumi}{item}{items}
\creflabelformat{enumi}{(#2\textup{#1}#3)}

\providecommand{\N}{{\ensuremath{\mathbbm{N}}}}
\providecommand{\Z}{{\ensuremath{\mathbbm{Z}}}}
\providecommand{\R}{{\ensuremath{\mathbbm{R}}}}

\renewcommand{\P}{{\ensuremath{\mathbbm{P}}}}
\providecommand{\E}{{\ensuremath{\mathbbm{E}}}}

\providecommand{\bfD}{{\ensuremath{\mathbf{D}}}}
\providecommand{\calR}{{\ensuremath{\mathcal{R}}}}
\providecommand{\calD}{{\ensuremath{\mathcal{D}}}}
\providecommand{\bfN}{{\ensuremath{\mathbf{N}}}}

\newcommand{\calP}{\mathcal{P}}
\newcommand{\xeqref}[1]{}

\newcommand{\supnorm}[1]{{\left\vert\kern-0.25ex\left\vert\kern-0.25ex\left\vert #1 
    \right\vert\kern-0.25ex\right\vert\kern-0.25ex\right\vert}}
\renewcommand{\gets}{\curvearrowleft}

\title[]{Rectified deep neural networks overcome the curse of dimensionality\\ when approximating solutions of \\McKean--Vlasov stochastic differential equations}

\author[A. Neufeld]{Ariel Neufeld$^{1}$}
\address{$^1$  Division of Mathematical Sciences, School of Physical and Mathematical Sciences, Nanyang Technological University, Singapore}
\email{ariel.neufeld@ntu.edu.sg}

\author[T.A. Nguyen]{Tuan Anh Nguyen$^{2}$}
\address{$^2$  Faculty of Mathematics, Bielefeld University, Bielefeld, Germany}
\email{tnguyen@math.uni-bielefeld.de}

\thanks{
Financial support by the Nanyang Assistant Professorship Grant (NAP Grant) \emph{Machine Learning based Algorithms in
Finance and Insurance} is gratefully acknowledged.}

\keywords{McKean--Vlasov SDEs, 
high-dimensional SDEs,
rectified deep neural networks,
complexity analysis,
curse of dimensionality,
multilevel Picard approximation}

\subjclass[2010]{60H30, 60H35, 65C05, 65C30, 65M75}


\sloppy
\allowdisplaybreaks
\begin{document}

\begin{abstract}
In this paper we prove that rectified deep neural networks do not suffer from the curse of dimensionality when approximating McKean--Vlasov SDEs in the sense that 
the number of parameters in the deep neural networks only  grows polynomially in the space dimension $d$ of the SDE and the reciprocal
 of the accuracy $\epsilon$.
\end{abstract}
	\maketitle
\section{Introduction}

In probability theory, a McKean--Vlasov 
stochastic differential equation (SDE)
is an SDE where the coefficients of the diffusion depend on the distribution of the solution itself.
These equations are a model for the Vlasov equation and were first studied by McKean~\cite{HPMcK1966}. With
McKean--Vlasov SDEs we can have a stochastic representation
of solutions of nonlinear, possibly non-local parabolic partial  differential equations (PDEs), e.g.,  Vlasov’s equation, Boltzmann’s equation, or Burgers’ equation.
 Moreover, it is known that weakly dependent
diffusion processes converge to independent solutions of McKean--Vlasov SDEs as the system size tends
to infinity. This phenomenon was called \emph{propagation of chaos} by Kac 
\cite{Kac1956}
 and is well studied
in the literature, see, e.g., \cite{Szn1991}. 
Numerical approximation of these McKean--Vlasov SDEs is an important topic in computational mathematics, see, e.g.,
\cite{RS2022,KNRS2022,BST2019,BH2022,dRES2022,dRST2023,BRRS2023,chen2023wellposedness,leobacher2022well,reisinger2021fast,bao2021first,agarwal2023numerical,chen2022euler,biswas2022explicit,chen2022flexible,reisinger2020regularity,reisinger2020path,reisinger2020posteriori,reisinger2020optimal,cass2019support,szpruch2019iterative,szpruch2019antithetic,alrachid2019new}. 

We are interested in the numerical approximation of \emph{high-dimensional} McKean--Vlasov SDEs, more precisely, in the following question: which approximation methods do not suffer from the curse of dimensionality 
in the sense that the computational complexity only grows polynomially in the space dimension $d$ of the SDE and the reciprocal
$\frac{1}{\epsilon}$ of the accuracy $\epsilon$?
To the best of our knowledge there has been only a few papers which have discussed McKean--Vlasov SDEs in high dimensions. First, \cite{HKN2022} has proven that multilevel Picard approximations do not suffer the curse of dimensionality when approximately solving McKean--Vlasov SDEs. 
Moreover, \cite{GMW2022,HHL2023,PW2022} developed deep learning based algorithms to approximately solve high-dimensional McKean--Vlasov equations. Furthermore, \cite{HHL2023} provided a priori error estimates under a suitable metric between the McKean--Vlasov equations under consideration and their standard forward backward SDE approximations which are used in their algorithm that can be controlled to be arbitrarily small and are dimension-free. However, it is left open whether the number of parameters in the deep neural networks used to approximate these standard forward backward SDEs suffers from the curse of dimensionality, or not.

In this paper we  show that rectified deep neural networks (DNNs) do not suffer from the curse of dimensionality when approximating McKean--Vlasov SDEs, either, in the sense that
the number of parameters in the deep neural networks only  grows polynomially in the space dimension $d$ of the SDE and the reciprocal
 of the accuracy~$\epsilon$.

\subsection{Notations}\label{s01}
Throughout this paper we use the following notations.
Let $\R$ denote the set of all real numbers. Let $\Z$ denote the set of all integers. Let $\N$ denote the set of all natural numbers starting at $1$ and let $\N_0=\N\cup\{0\}$. 
Let $\left \lVert\cdot \right\rVert, \supnorm{\cdot} \colon (\cup_{d\in \N} \R^d) \to [0,\infty)$, 
$\dim \colon (\cup_{d\in \N}\R^d) \to \N$
satisfy for all $d\in \N$, $x=(x_1,\ldots,x_d)\in \R^d$ that $\lVert x\rVert=\sqrt{\sum_{i=1}^d(x_i)^2}$, $\supnorm{x}=\max_{i\in [1,d]\cap \N}|x_i|$, and
$\dim (x)=d$.
For every probability space $(\Omega,\mathcal{F}, \P)$,  $d\in \N$,
$p\in [1,\infty)$, and every random variable $X\colon \Omega\to\R^d$ let $\lVert X\rVert_{L^p(\P;\R^d)} \in [0,\infty]$ satisfy that
$\lVert X\rVert_{L^p(\P;\R^d)}^p= \E[\lVert X\rVert^p]$.

\subsection{A mathematical framework for DNNs}

First of all in \cref{m07} below we introduce a mathematical framework for DNNs with ReLU activation function.

\begin{setting}[A mathematical framework for DNNs]\label{m07}
Let
 $\mathbf{A}_{d}\colon \R^d\to\R^d $, $d\in \N$, satisfy for all $d\in\N$, $x=(x_1,\ldots,x_d)\in \R^d$ that 
\begin{align}
\mathbf{A}_{d}(x)= \left(\max\{x_1,0\},\max\{x_2,0\},\ldots,\max\{x_d,0\}\right).
\end{align}
Let $\mathbf{D}=\cup_{H\in \N} \N^{H+2}$. Let
\begin{align}
\mathbf{N}= \bigcup_{H\in  \N}\bigcup_{(k_0,k_1,\ldots,k_{H+1})\in \N^{H+2}}
\left[ \prod_{n=1}^{H+1} \left(\R^{k_{n}\times k_{n-1}} \times\R^{k_{n}}\right)\right].
\end{align} Let $\mathcal{D}\colon \mathbf{N}\to\mathbf{D}$, 
$\mathcal{P}\colon \mathbf{N}\to \N$,
$
\mathcal{R}\colon \mathbf{N}\to (\cup_{k,l\in \N} C(\R^k,\R^l))$
satisfy that
for all $H\in \N$, $k_0,k_1,\ldots,k_H,k_{H+1}\in \N$,
$
\Phi = ((W_1,B_1),\ldots,(W_{H+1},B_{H+1}))\in \prod_{n=1}^{H+1} \left(\R^{k_n\times k_{n-1}} \times\R^{k_n}\right), 
$
$x_0 \in \R^{k_0},\ldots,x_{H}\in \R^{k_{H}}$ with the property that
$\forall\, n\in \N\cap [1,H]\colon x_n = \mathbf{A}_{k_n}(W_n x_{n-1}+B_n )
$ we have that
\begin{align}
\mathcal{P}(\Phi)=\sum_{n=1}^{H+1}k_n(k_{n-1}+1),
\quad 
\mathcal{D}(\Phi)= (k_0,k_1,\ldots,k_{H}, k_{H+1}),
\end{align}
$
\mathcal{R}(\Phi )\in C(\R^{k_0},\R ^ {k_{H+1}}),
$
and
\begin{align}
 (\mathcal{R}(\Phi)) (x_0) = W_{H+1}x_{H}+B_{H+1}.
\end{align}
\end{setting}
Let us comment on the mathematical objects  in  \cref{m07}. For all $ d\in \N $, $ \mathbf{A}_d\colon\R^d\to\R^d$  refers to the componentwise  rectified linear unit (ReLU) activation function. 
Moreover, by $ \mathbf{N} $ we denote the set of all
parameters characterizing artificial feed-forward DNNs. 
In addition, by $ \calD $ we denote the function that maps the parameters characterizing a DNN to the vector of its layer dimensions. Furthermore, by $ \calR $   we denote the operator that maps each parameters characterizing a DNN to its corresponding function. Finally, by $ \calP $ we denote the function that counts the number of parameters of the corresponding DNN.

\subsection{Main result}
\begin{theorem}\label{g01b}
 Assume \cref{m07}. Let $T\in(0,\infty)$,
$c\in [1,\infty)$,
$r\in\N$. For every
$d\in \N$, $\varepsilon\in [0,1)$ let $\mu_{d,\varepsilon}\in C(\R^d\times\R^d,\R^d)$,
$f_{d,\varepsilon}\in C(\R^d,\R)$.
For every
$d\in \N$, $\varepsilon\in (0,1)$ let
$\Phi_{\mu_{d,\varepsilon}},\Phi_{f_{d,\varepsilon}}\in \bfN$
satisfy that
$\mu_{d,\varepsilon}=\calR(\Phi_{\mu_{d,\varepsilon}}) $ and
$f_{d,\varepsilon}=\calR(\Phi_{f_{d,\varepsilon}}) $. Moreover,
 assume for all 
$d\in \N$,
$\varepsilon\in (0,1)$, $x,y,x_1,x_2,y_1,y_2\in \R^d$  that
\begin{align}\label{g04}
\left\lVert
\mu_{d,\varepsilon}(x_1,y_1)-
\mu_{d,\varepsilon}(x_2,y_2)\right\rVert\leq 0.5c\lVert x_1-y_1\rVert+0.5c \lVert x_2-y_2\rVert,
\end{align}
\begin{align}\label{g06}
\left\lVert
\mu_{d,\varepsilon}(x_1,y_1)-
\mu_{d,0}(x_1,y_1)\right\rVert\leq cd^c\varepsilon+ 0.5cd^c\varepsilon\lVert x_1\rVert^r+
0.5cd^c\varepsilon\lVert y_1\rVert^r,
\end{align}
\begin{align}\label{g11}
\left \lvert f_{d,\varepsilon}( x )-f_{d,\varepsilon}(y)\right\rvert\leq c\lVert x-y\rVert,
\end{align}
\begin{align}\label{g07}
\left\lvert
f_{d,\varepsilon}(x)-f_{d,0}(x)
\right\rvert\leq \varepsilon(1+ \lVert x\rVert^r),
\end{align}
\begin{align}\label{g08}
\left\lvert
f_{d,\varepsilon}(0)
\right\rvert+
\left\lVert\mu_{d,\varepsilon}(0,0)\right\rVert
\leq cd^c,
\end{align}
and
\begin{align}\label{g09}
\max\!\left\{\calP(\Phi_{\mu_{d,\varepsilon}}),
\calP(\Phi_{f_{d,\varepsilon}})\right\}\leq d^c\varepsilon^{-c}.
\end{align}
Then the following items hold.
\begin{enumerate}[i)]
\item There exists a probability space $(\Omega,\mathcal {F}, \P)$, a family of standard Brownian motions $W^d\colon [0,T]\times \Omega\to \R^d$, $d\in \N$, and  $(X^{d,x}(t))_{d\in\N,x\in\R^d,t\in [0,T]}$, $d\in \N$, such that for every $d\in \N$, $ (X^{d,x}(t))_{t\in [0,T]} $ 
is
$(\sigma( (W_s)_{s\in[0,t]} ))_{t\in[0,T]}$-adapted stochastic process with continuous sample paths
  and satisfies for all $t\in [0,T]$, $m\in [1,\infty)$ that
$
\E\!\left[
\sup_{s\in [0,T]}\left\lvert
X^{d,x}(s)\right\rvert^m\right]<\infty
$ and
$\P$-a.s.
\begin{align} \begin{split} X^{d,x}(t)=x+\int_{0}^{t}\E \!\left[\mu_{d,0}\bigl(y,X^{d,x}(s)\bigr)
\right]\bigr|_{y=X^{d,x}(s)}\,ds+W^{d}(t).\end{split}\label{g10}
\end{align}
\item There exist $\eta\in (0,\infty)$ and $(\Psi_{d,\epsilon}) _{d\in\N,\epsilon\in \N}\subseteq\bfN$ 
such that for all $d\in \N$, $\epsilon\in (0,1)$ we have that
$
\calP(\Psi_{d,\epsilon})\leq\eta d^{\eta}\epsilon^{-\eta}
$ and
\begin{align}
\left(
\int_{[0,1]^d}
\left\lvert
(\calR(\Psi_{d,\epsilon}))(x)
-\E\!\left[f_{d,0}(X^{d,x}(T))\right]\right\rvert^2 dx
\right)^\frac{1}{2}
<\epsilon.
\end{align}
\end{enumerate}

\end{theorem}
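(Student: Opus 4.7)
My approach has three stages --- (I) item~(i) by classical MV-SDE theory, (II) construction of a multilevel Picard (MLP) approximation to $u_d(x):=\E[f_{d,0}(X^{d,x}(T))]$, and (III) realisation of a favourable instance of that MLP approximation as a ReLU DNN --- leading to a polynomially bounded parameter count. For stage~(I), passing $\varepsilon\downarrow 0$ in \cref{g04,g08} gives global Lipschitz continuity of $\mu_{d,0}$ in both arguments and the bound $\lVert\mu_{d,0}(0,0)\rVert\le cd^c$. The MV-SDE \cref{g10} then falls within the classical framework of Sznitman~\cite{Szn1991}: Picard iteration in the $L^2$-Wasserstein space on a canonical probability space carrying a sufficiently rich family of independent Brownian motions $(W^d)_{d\in\N}$ yields existence, uniqueness, and moments of all orders, establishing~(i).

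For stage~(II), I would (a) first replace $(\mu_{d,0},f_{d,0})$ by $(\mu_{d,\varepsilon},f_{d,\varepsilon})$ for a polynomially small $\varepsilon$, using the quantitative perturbation bounds \cref{g06,g07}, the Lipschitz estimates \cref{g04,g11}, and the moment bounds via Gr\"onwall to control the induced bias on $u_d$; and (b) apply the MLP result of \cite{HKN2022} (quantified in terms of $n$ recursion levels and $M$ base Monte Carlo samples per level) to the smoothed MV-SDE, producing a random iterate $\mathcal{U}^{d,\varepsilon}_{n,M}(x,\omega)$ built from nested evaluations of $\mu_{d,\varepsilon},f_{d,\varepsilon}$, affine operations, and injected Brownian increments together with uniform time samples. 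The MLP error analysis combined with (a) yields $\lVert\mathcal{U}^{d,\varepsilon}_{n,M}-u_d\rVert_{L^2([0,1]^d\times\Omega)}<\epsilon$ for $n,M$ polynomial in $d,\epsilon^{-1}$. A Fubini/pigeonhole argument then extracts a single realisation $\omega_\ast$ with $\lVert\mathcal{U}^{d,\varepsilon}_{n,M}(\cdot,\omega_\ast)-u_d\rVert_{L^2([0,1]^d)}<\epsilon$.

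For stage~(III), the map $x\mapsto\mathcal{U}^{d,\varepsilon}_{n,M}(x,\omega_\ast)$ is, by construction of the MLP scheme, an explicit composition of finitely many copies of $\Phi_{\mu_{d,\varepsilon}}$ and $\Phi_{f_{d,\varepsilon}}$ with affine maps (carrying the realised noise and the Monte Carlo weights). Using the standard ReLU DNN calculus (composition and parallelisation lemmas), I would inductively assemble $\Psi_{d,\epsilon}\in\bfN$ representing this map and prove
\[
\calP(\Psi_{d,\epsilon})\le C\,(nM)^{C}\bigl(\calP(\Phi_{\mu_{d,\varepsilon}})+\calP(\Phi_{f_{d,\varepsilon}})\bigr).
\]
Combining with \cref{g09} and the polynomial scaling of $n,M,\varepsilon^{-1}$ in $d,\epsilon^{-1}$ gives $\calP(\Psi_{d,\epsilon})\le\eta d^\eta\epsilon^{-\eta}$, proving~(ii).

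The main obstacle is the simultaneous balance of the smoothing scale $\varepsilon$, the MLP parameters $(n,M)$, and the DNN parameter count in stage~(III): each nested recursion level in the MLP scheme \emph{multiplies} the emerging DNN size by a factor that grows with $n$, so a crude composition bound would be exponential in the Picard depth and would destroy the claimed polynomial growth. The technical heart of the proof will therefore be a tight inductive parameter-count lemma treating $\Phi_{\mu_{d,\varepsilon}},\Phi_{f_{d,\varepsilon}}$ as fixed sub-networks whose weights enter only once per recursion call, coupled with the sharp MLP error estimates of \cite{HKN2022}, chosen so that the final exponent $\eta$ is independent of $\epsilon$.
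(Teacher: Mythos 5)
Your overall architecture --- existence via classical McKean--Vlasov theory, perturbation from $(\mu_{d,0},f_{d,0})$ to the DNN coefficients $(\mu_{d,\varepsilon},f_{d,\varepsilon})$, the MLP/Monte Carlo approximation of \cite{HKN2022}, a Fubini/pigeonhole choice of a realisation $\omega_\ast$, and assembly of the realised approximation by ReLU calculus --- coincides with the paper's route (cf.\ \cref{a01b,a01,k12,s14} and the proof of \cref{g01}). The gap is in your stage~(III) parameter count, which is the quantitative heart of the theorem. The bound $\calP(\Psi_{d,\epsilon})\le C\,(nM)^{C}\bigl(\calP(\Phi_{\mu_{d,\varepsilon}})+\calP(\Phi_{f_{d,\varepsilon}})\bigr)$, justified by treating $\Phi_{\mu_{d,\varepsilon}},\Phi_{f_{d,\varepsilon}}$ as sub-networks ``whose weights enter only once per recursion call'', is not available in the formalism of \cref{m07}: $\calP$ counts every entry of every weight matrix of a plain feedforward network, with no weight sharing, and the MLP iterate in \eqref{a05b} contains at least $m^{\,n-1}$ nested evaluations of $\mu$, each driven by its own independent time and Brownian samples and hence realising a genuinely different function once $\omega_\ast$ is fixed; the composition and parallelisation lemmas necessarily duplicate the $\mu$-subnetwork (block-diagonally) for each such evaluation, so the network size is exponential in the Picard depth $n$, exactly as in the paper's bound $\supnorm{\calD(\Phi^{\theta}_{n,t})}\le c(5m)^{n}$ in \cref{k12}. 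Consequently your companion claim that it suffices to know $n,M$ are polynomial in $d,\epsilon^{-1}$ cannot be combined with any correct parameter count to yield the polynomial bound in item~(ii).

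The repair is not weight reuse but a sharper error-versus-size balance. The MLP error decays superexponentially in the depth when $m=n$ (roughly $e^{m/2}m^{-n/2}$ times polynomial factors, by \cite[Theorem~3.1]{HKN2022}; cf.\ \eqref{a76}), so one chooses the \emph{minimal} depth $N_{d,\epsilon}$ for which this error is below $\epsilon/2$ (the paper's \eqref{a61}); minimality forces $N_{d,\epsilon}$ to grow only like $\log(\epsilon^{-1})/\log\log(\epsilon^{-1})$, whence the exponential-in-$N$ quantities $5^{N}$, $N^{N}$ and the outer Monte Carlo sample count $K=N^{N}$ are all bounded by a fixed power of $\epsilon^{-1}$ up to the constant $C_\delta$ in \eqref{k90}; combining with \eqref{g09} then gives $\calP(\Psi_{d,\epsilon})\le\eta d^{\eta}\epsilon^{-\eta}$. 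Without this balancing step your argument does not deliver item~(ii). Stages (I) and (II) are otherwise consistent with the paper; in particular your observation that \eqref{g04} and \eqref{g08} pass to $\varepsilon=0$ via \eqref{g06} is indeed what makes the existence result and the perturbation lemma applicable to $\mu_{d,0}$.
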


\begin{remark}

For every $d\in \N$ let $P_2(\R^d)$ denote the set of probability measure on $\R^d$ with finite second moments 
and
let $b_{d,0}\colon \R^d\times {P}_2(\R^d)\to \R^d$ satisfy for all $x\in\R^d$, $\nu\in P_2(\R^d)$ that
$
b_{d,0}(x,\nu)=\int\mu_{d,0} (x,y)\,\nu(dy).
$
Then we have that
\begin{align}
X^{d,x}(t)
&=x+\int_{0}^{t}\E \!\left[\mu_{d,0}\bigl(y,X^{d, x}(s)\bigr)
\right]\bigr|_{y=X^{d, x}(s)}\,ds+W^{d,\theta}(t)\nonumber\\
&=x+\int_{0}^{t}b_{d,0}(X^{d,x}(s),\P_{X^{d, x}(s)})\,ds+W^{d,\theta}(t).
\end{align}
\end{remark}
Let us comment on the mathematical objects in \cref{g01b}.
For every $d\in \N$, $x\in \R^d$ we want to approximate $\E\!\left[f_{d,0}(X^{d,x}(T))\right]$ where $f_{d,0}\in C(\R^d,\R)$ and $X^{d,x}(T)$ is the terminal condition of the corresponding McKean--Vlasov SDE in \eqref{g10}. 
Note that in the non-McKean--Vlasov setting, 
$\E\!\left[f_{d,0}(X^{d,x}(T))\right]$, $d\in \N$, are solutions of the associated linear Kolmogorov PDEs.
Conditions \eqref{g06}, \eqref{g07}, and \eqref{g09}
state that the functions $\mu_{d,0}$, $f_{d,0}$, $d\in \N$, can be approximated by  functions $\mu_{d,\varepsilon}$, $f_{d,\varepsilon}$, $d\in \N$, $\varepsilon\in (0,1)$, which are generated by  DNNs, without the curse of dimensionality. Conditions \eqref{g04} and \eqref{g11} are Lipschitz-type conditions. 
Condition \eqref{g08} states that the initial values of the functions $\mu_{d,\varepsilon}$ and $f_{d,\varepsilon}$, $\varepsilon\in (0,1)$, $d\in \N$,
grow polynomially in the dimension $d\in \N$.
Note that \eqref{g04} is  needed for the existence and uniqueness of \eqref{g10}, which we will discuss later in the proof of the main theorem.  Under these assumptions \cref{g01b} states that, roughly speaking, if DNNs can approximate the drift coefficients $\mu_{d,0}$
and the functions $f_{d,0}$, 
 $d\in \N$, without the curse of dimensionality, then they can also approximate the expectations
$\E\!\left[f_{d,0}(X^{d,x}(T))\right]$
 without the curse of dimensionality.
We also refer to \cite{AJK+2023,CR2023,CHW2022,GHJvW2023,HJKN2020a,JSW2021,NNW2023,GS2021a,GS2021,JKvW2023} for similar results  that DNNs can overcome the curse of dimensionality when approximating PDEs and  SDEs (but not of McKean--Vlasov type).

\subsection{Outline of the proof and organization of our paper}
\cref{g01b} follows directly from \cref{g01} in \cref{s02}, see the proof of \cref{g01b} after the proof of \cref{g01}. To prove \cref{g01} we need \cref{a01,k12,s14} in \cref{s03,s04}. 
The main idea of the proof are
the fact that 
multilevel Picard approximations (see \eqref{a05b}) are proven (cf. \cite[Theorem~1.1]{HKN2022}) to overcome the curse of dimensionality when approximating the 
solutions of \eqref{g10}, $d\in \N$, together with
 the fact that 
when the corresponding  drift coefficients $\mu_{d,0}$, $d\in \N$, and the functions $f_{d,0}$, $d\in \N$, can be represented by DNNs, then the corresponding multilevel Picard  and  Monte Carlo approximations can also be represented by DNNs (cf. \cref{k12,s14}).
To prove \cref{k12,s14} we need some basic DNN calculus, especially, the fact that the composition and the sum of functions generated by DNNs with the same length are again a function generated by DNNs (cf. \cref{m11b,b01b}) and the fact that the identity can be represented by DNNs (cf. \cref{b03}).
 Since $\mu_{d,0}$ and $f_{d,0}$, $d\in \N$, are not supposed to be represented by DNNs but to be approximated by them, we need a perturbation lemma, \cref{a01}. Combining \cref{a01} with the error estimate for the corresponding multilevel Picard and the Monte Carlo approximations we obtain the error estimate for the multilevel Picard and the Monte Carlo approximations with coefficient functions generated by DNNs (cf. \eqref{a76} in the proof of \cref{g01}).

\section{Perturbation lemma}\label{s03}
First of all, we discuss existence and uniqueness of the McKean--Vlasov SDE~\eqref{g10} in \cref{a01b} below. Afterwards, we formulate and prove \cref{a01}, which is the perturbation lemma mentioned in the outline of the proof.
\begin{lemma}[Existence and uniqueness]\label{a01b}
 Let $T\in(0,\infty)$,
$c\in [1,\infty)$,
$d\in \N$, 
$\mu_{}\in C(\R^d\times\R^d,\R^d)$
satisfy 
  for all 
$x_1,x_2,y_1,y_2\in \R^d$  that
\begin{align}
\left\lVert
\mu(x_1,y_1)-
\mu(x_2,y_2)\right\rVert\leq 0.5c\lVert x_1-y_1\rVert+0.5c \lVert x_2-y_2\rVert.\label{c02}
\end{align}
Let $(\Omega,\mathcal{F},\P)$ be a  probability space. Let $W=(W(t))_{t\in [0,T]}\colon [0,T]\times\Omega\to\R^d$ be a standard
$d$-dimensional Brownian motion with continuous sample paths. Then for every $x\in \R^d$ there exists
a 
$(\sigma( (W(s))_{s\in[0,t]} ))_{t\in[0,T]}$-adapted stochastic process  $X^{x}=(X^{x}(t))_{t\in [0,T]}\colon [0,T]\times \Omega\to \R^d$  with continuous sample paths 
satisfying for all $t\in [0,T]$, $m\in [1,\infty)$ that
$
\E\!\left[
\sup_{s\in [0,T]}\left\lvert
X^{x}(s)\right\rvert^m\right]<\infty
$ and
\begin{align}
X^{x}(t)=x+\int_{0}^{t}
\E\!\left[\mu(y,X^{x}(s))\right]\bigr|_{y=X^{x}(s)}\,ds +W(t).
\end{align}
\end{lemma}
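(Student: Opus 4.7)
The plan is to construct $X^x$ via Picard iteration in a suitable complete space of adapted continuous processes, following the classical Sznitman-type argument for McKean--Vlasov SDEs. Fix $x\in\R^d$, let $\mathcal{S}$ denote the space of $(\sigma((W(s))_{s\in[0,t]}))_{t\in[0,T]}$-adapted continuous processes $Y\colon[0,T]\times\Omega\to\R^d$ with $\lVert Y\rVert_{\mathcal{S}}^2:=\E[\sup_{s\in[0,T]}\lVert Y(s)\rVert^2]<\infty$, and define the Picard iterates by $X_0(t)\equiv x$ and
\begin{equation*}
X_{n+1}(t) = x + \int_0^t \E[\mu(y, X_n(s))]|_{y=X_n(s)}\,ds + W(t), \qquad t\in[0,T],\; n\in\N_0.
\end{equation*}

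First I would extract a linear growth bound on $\mu$ from the Lipschitz hypothesis \eqref{c02}: choosing $x_2=y_2=0$ gives $\lVert\mu(x_1,y_1)\rVert \le \lVert\mu(0,0)\rVert + \tfrac{c}{2}\lVert x_1-y_1\rVert$, so $\lVert\mu(y,z)\rVert \le C(1+\lVert y\rVert+\lVert z\rVert)$ for some finite $C$ depending only on $c$ and $\mu(0,0)$. Combined with Jensen's inequality, standard estimates for $\E[\sup_{t\le T}\lVert W(t)\rVert^m]$, and a Gronwall argument applied to $t\mapsto\E[\sup_{s\le t}\lVert X_n(s)\rVert^m]$, this yields the uniform-in-$n$ moment bound $\sup_{n\in\N_0}\E[\sup_{t\in[0,T]}\lVert X_n(t)\rVert^m]<\infty$ for every $m\in[1,\infty)$.

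I would then establish that the Picard map $\Phi\colon Y\mapsto x+\int_0^{\cdot}\E[\mu(y,Y(s))]|_{y=Y(s)}\,ds+W$ is a contraction in the weighted norm $\lVert Y\rVert_\lambda^2:=\sup_{t\in[0,T]}e^{-2\lambda t}\,\E[\sup_{s\in[0,t]}\lVert Y(s)\rVert^2]$ for $\lambda>0$ sufficiently large. The key estimate derived from \eqref{c02} is
\begin{equation*}
\lVert\E[\mu(y_1,Y(s))]|_{y_1=Y(s)} - \E[\mu(y_2,Z(s))]|_{y_2=Z(s)}\rVert \le \tfrac{c}{2}\lVert Y(s)-Z(s)\rVert + \tfrac{c}{2}\,\E[\lVert Y(s)-Z(s)\rVert],
\end{equation*}
which controls the difference of the Picard integrands by $Y-Z$ both pathwise and in expectation. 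Integrating, taking the supremum in time, and absorbing the time integral against the exponential weight yields a contraction constant strictly less than $1$ for $\lambda$ large. Banach's fixed-point theorem then delivers a unique $X^x\in\mathcal{S}$ satisfying the McKean--Vlasov SDE, and the estimate $\E[\sup_{s\in[0,T]}\lVert X^x(s)\rVert^m]<\infty$ for every $m\in[1,\infty)$ follows from the uniform Picard moment bounds by Fatou's lemma.

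The principal obstacle is the double role played by the solution in the drift: $X^x(s)$ appears both inside the expectation (law-dependence) and as the argument at which the resulting deterministic function is evaluated (state-dependence). The contraction argument must therefore simultaneously control a pathwise and a law-level error, and it is crucial that the Lipschitz estimate \eqref{c02} splits into a sum of two terms so that each contribution can be absorbed separately; the weighted-norm trick then converts what would otherwise be an $L^\infty$-in-time Gronwall circularity into a genuine Banach contraction.
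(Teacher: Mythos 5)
Your construction is correct, but it takes a genuinely different route from the paper. The paper does not run a fixed-point argument at all: it rewrites the drift as $b(x,\nu)=\int\mu(x,y)\,\nu(dy)$ on $\R^d\times P_2(\R^d)$, verifies from \eqref{c02} that $b$ is $0.5c$-Lipschitz in the measure variable with respect to the Wasserstein $2$-distance, $0.5c$-Lipschitz in the state variable, and satisfies the one-sided monotonicity bound $\langle x_1-x_2,b(x_1,\nu)-b(x_2,\nu)\rangle\leq 0.5c\lVert x_1-x_2\rVert^2$, and then invokes an off-the-shelf well-posedness theorem for McKean--Vlasov SDEs (Theorem~3.3 of the cited reference), which also supplies the moment bounds. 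Your Sznitman-style Picard/Banach argument in the weighted norm $\sup_{t}e^{-2\lambda t}\E[\sup_{s\leq t}\lVert Y(s)\rVert^2]$ proves the same statement self-containedly, and your key estimate, which splits the drift difference into a pathwise term $\tfrac c2\lVert Y(s)-Z(s)\rVert$ and a law term $\tfrac c2\E\lVert Y(s)-Z(s)\rVert$, is exactly the mechanism that the citation hides; the price is that you must supply the uniform-in-$n$ higher-moment bounds yourself (your ``Gr\"onwall on $\E[\sup_{s\leq t}\lVert X_n(s)\rVert^m]$'' is really an iteration across $n$ of an affine integral bound, which is standard but should be stated as such before applying Fatou). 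One point worth being aware of: your two-term Lipschitz estimate uses the hypothesis in the form $\lVert\mu(x_1,y_1)-\mu(x_2,y_2)\rVert\leq 0.5c\lVert x_1-x_2\rVert+0.5c\lVert y_1-y_2\rVert$ rather than \eqref{c02} as literally typeset (which compares $\lVert x_1-y_1\rVert$ and $\lVert x_2-y_2\rVert$); the paper's own derivation of the Wasserstein and state Lipschitz bounds relies on the same reading, so this reflects an apparent typo in \eqref{c02} rather than a gap in your argument.
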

\begin{proof}
[Proof of \cref{a01b}]
Let $P_2(\R^d)$ denote the set of probability measure on $\R^d$ with finite second moments. Let $\mathbf{W}_2\colon P_2(\R^d)\times P_2(\R^d)\to [0,\infty)$ denote the Wasserstein $2$-distance. Recall that for $\nu_1,\nu_2\in P_2(\R^d)$ the Wasserstein $2$-distance between $\nu_1$ and $\nu_2$ is defined as
\begin{align}
\mathbf{W}_2(\nu_1,\nu_2)=
\inf\!\left\{\left(\hat{\E} \!\left[\lVert Y_1-Y_2 \rVert^2\right]\right)^{1/2}\colon
\begin{aligned}
(\hat{\Omega},\hat{\mathcal{F}}, \hat{\P}) \text{ is a probability space,}\\
Y_1\in L^2(\hat{\P},\R^d),Y_2\in L^2(\hat{\P},\R^d),\\
\hat{\P}\circ Y_1^{-1}=\nu_1,\hat{\P}\circ Y_2^{-1}=\nu_2
\end{aligned}
\right\}.	
\end{align}
Let $b\colon \R^d\times {P}_2(\R^d)\to \R^d$ satisfy for all $x\in\R^d$, $\nu\in P_2(\R^d)$ that
\begin{align}\label{c03}
b(x,\nu)=\int\mu (x,y)\,\nu(dy).
\end{align}
Then \eqref{c02} shows that for all $x\in\R^d$, all probability space $(\hat{\Omega},\hat{\mathcal{F}},\hat{\P})$, and all random variables $Y_1,Y_2\colon \hat{\Omega}\to \R^d$
with the property that
$\hat{\P}(Y_1\in \cdot )=\nu_1$,
$\hat{\P}(Y_2\in \cdot )=\nu_2$
we have that
\begin{align} 
\left\lVert
b(x,\nu_1)-b(x,\nu_2)\right\rVert&=
\left\lVert
\int\mu (x,y)\,\nu_1(dy)
-
\int\mu (x,y)\,\nu_2(dy)
\right\rVert\nonumber\\&
=\left\lVert\hat{\E}\!\left[ \mu(x,Y_1)\right]-
\hat{\E}\!\left[ \mu(x,Y_2)\right]\right\rVert\nonumber\\
&\leq 0.5c \lVert Y_1-Y_2\rVert_{L^2(\hat{\P},\R^d)}.\label{c04}
\end{align}
This and the definition of the Wasserstein $2$-distance show for all $x\in\R^d$, $\nu_1,\nu_2\in P_2(\R^d)$ that 
\begin{align}
\left\lvert
b(x,\nu_1)-b(x,\nu_2)\right\rvert\leq 0.5c\mathbf{W}_2(\nu_1,\nu_2).
\end{align}
Furthermore, \eqref{c03}, \eqref{c02}, and the Cauchy-Schwarz inequality show for all $x_1 ,x_2\in \R^d$, $\nu\in P_2(\R^d)$ that
\begin{align}
\left\lVert
b(x_1,\nu)-b(x_2,\nu)
\right\rVert&= 
\left\lVert
\int \mu (x_1,y)-\mu(x_2,y)\,\nu(dy)
\right\rVert\nonumber\\&\leq 
\int\left\lVert \mu (x_1,y)-\mu(x_2,y)\right\rVert\nu(dy)
\nonumber\\
&\leq 0.5c \left\lVert x_1-x_2\right\rVert 
\end{align}
and\begin{align}
\left\langle
x_1-x_2,
b(x_1,\nu)-b(x_2,\nu)\right\rangle
&
\leq \left\lVert x_1-x_2\right\rVert\left\lVert
b(x_1,\nu)-b(x_2,\nu)
\right\rVert\leq 0.5c \left\lVert x_1-x_2\right\rVert ^2,
\end{align}
where $\langle\cdot,\cdot\rangle$ denotes the standard scalar product on $\R^d$.
This, \eqref{c04}, and a result on existence and uniqueness of McKean--Vlasov SDEs (see, e.g.,
\cite[Theorem 3.3]{RST2019}) show that 
for every  $x\in \R^d$
there exists a unique 
$(\sigma( (W(s))_{s\in[0,t]} ))_{t\in[0,T]}$-adapted 
stochastic process 
$(X^{x}(t))_{t\in [0,T]}$
such that
for all $t\in [0,T]$
we have that $\P$-a.s.\ 
\begin{align}  X^{x}(t)&=x+\int_{0}^{t}b(X^{x}(s),\P_{X^{x}(s)})\,ds+W(t)\nonumber\\&=
x+\int_{0}^{t}\int \mu(X^{x}(s),y)
\P_{X^{x}(s)}(dy)
\,ds+W(t)\nonumber\\
&=x+\int_{0}^{t}\E \!\left[\mu\bigl(y,X^{x}(s)\bigr)
\right]\bigr|_{y=X^{x}(s)}\,ds+W(t)
\end{align}
and 
such that for all $m\in [1,\infty)$ we have that
$
\E\!\left[
\sup_{s\in [0,T]}\left\lvert
X^{x}(s)\right\rvert^m\right]<\infty.
$ This completes the proof of \cref{a01b}.
\end{proof}
\begin{lemma}[Perturbation lemma]\label{a01}
Let $T\in(0,\infty)$,
$b,c\in [1,\infty)$,
$d,p,r\in\N$.  For every $\varepsilon\in [0,1)$ let $\mu_\varepsilon\in C(\R^d\times\R^d,\R^d)$,
$f_\varepsilon\in C(\R^d,\R)$.
Assume for all $\varepsilon\in [0,1)$, $x,y,x_1,x_2,y_1,y_2\in \R^d$  that
\begin{align}\label{a02}
\left\lVert
\mu_\varepsilon(x_1,y_1)-
\mu_\varepsilon(x_2,y_2)\right\rVert\leq 0.5c\lVert x_1-y_1\rVert+0.5c \lVert x_2-y_2\rVert,
\end{align}
\begin{align}\label{a03}
\left\lVert
\mu_\varepsilon(x_1,y_1)-
\mu_0(x_1,y_1)\right\rVert\leq b\varepsilon+ 0.5b\varepsilon\lVert x_1\rVert^r+
0.5b\varepsilon\lVert y_1\rVert^r,
\end{align}
\begin{align}
\left\lvert
f_\varepsilon(x)-f_0(x)
\right\rvert\leq \varepsilon(1+ \lVert x\rVert^r),\quad \left \lvert f_0( x )-f_0(y)\right\rvert\leq c\lVert x-y\rVert.\label{a04b}
\end{align}
Let $(\Omega,\mathcal{F},\P)$ be a  probability space. Let $W=(W(t))_{t\in [0,T]}\colon [0,T]\times\Omega\to\R^d$ be a standard
$d$-dimensional Brownian motion with continuous sample paths. For every 
$x\in \R^d$,
$\varepsilon\in [0,1)$ let $X^{\varepsilon,x}=(X^{\varepsilon,x}(t))_{t\in [0,T]}\colon [0,T]\times \Omega\to \R^d$ be a 
$(\sigma( (W(s))_{s\in[0,t]} ))_{t\in[0,T]}$-adapted stochastic process with continuous sample paths satisfying for all $t\in [0,T]$, $m\in [1,\infty)$ that
$
\E\!\left[
\sup_{s\in [0,T]}\left\lvert
X^{x}(s)\right\rvert^m\right]<\infty
$  and
\begin{align}\label{a04}
X^{\varepsilon,x}(t)=x+\int_{0}^{t}
\E\!\left[\mu_\varepsilon(y,X^{\varepsilon,x}(s))\right]\bigr|_{y=X^{\varepsilon,x}(s)}\,ds +W(t)
\end{align}
(for existence and uniqueness cf. \cref{a01b}).
Then the following items hold.
\begin{enumerate}[(i)]
 \item \label{a07b} For all
$\varepsilon\in [0,1)$, 
$t\in [0,T]$,
$x\in\R^d$ we have that 
\begin{align}
\left\lVert 
X^{\varepsilon,x}(t)\right\rVert_{L^{pr}(\P;\R^d)}\leq
\left(
\lVert x\rVert+T\left\lVert\mu_\varepsilon(0,0)\right\rVert+\sqrt{T(d+2pr)}\right)e^{ct}.
\end{align}
\item \label{a11}For all
$\varepsilon\in [0,1)$, 
$t\in [0,T]$,
$x\in\R^d$ we have that
\begin{align} \begin{split} 
&
\left\lVert
X^{\varepsilon,x}(t)-X^{0,x}(t)\right\rVert_{L^p(\P;\R^d)}\leq 
Tb\varepsilon \left(1+
\lVert x\rVert+T\left\lVert\mu_0(0,0)\right\rVert+\sqrt{T(d+2pr)}\right)^re^{(r+1)cT}.
\end{split}\end{align}
\item \label{c01}
For all
$\varepsilon\in [0,1)$, 
$t\in [0,T]$,
$x\in\R^d$ we have that
\begin{align} \begin{split} 
&\left\lVert f_\varepsilon(X^{\varepsilon,x}(t))
-
f_0(X^{0,x}(t))\right\rVert_{L^p(\P;\R)}
\\
&\leq 
b\varepsilon \left(1+
\lVert x\rVert+T
\max\{
\left\lVert\mu_\varepsilon(0,0)\right\rVert,
\left\lVert\mu_0(0,0)\right\rVert
\}
+\sqrt{T(d+2pr)}\right)^re^{(r+2)cT}
.
\end{split}
\end{align}
\end{enumerate}

\end{lemma}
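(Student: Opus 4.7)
The plan is to prove items (i), (ii), (iii) in order, since (ii) depends on (i) and (iii) depends on both. All three proofs use the integral equation \eqref{a04}, Minkowski's integral inequality, Jensen's inequality, the standard Brownian moment bound $\lVert W(t)\rVert_{L^{pr}(\P;\R^d)} \leq \sqrt{t(d+2pr)}$, and Gronwall's lemma.

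For item (i), I take $L^{pr}$-norms in \eqref{a04}, apply Minkowski's integral inequality to the drift integral, and use \eqref{a02} with $x_2 = y_2 = 0$ to obtain the pointwise linear growth bound $\lVert \mu_\varepsilon(y,z)\rVert \leq \lVert \mu_\varepsilon(0,0)\rVert + \tfrac{c}{2}\lVert y\rVert + \tfrac{c}{2}\lVert z\rVert$, together with Jensen's inequality to control the inner expectation $\E[\mu_\varepsilon(y,X^{\varepsilon,x}(s))]|_{y=X^{\varepsilon,x}(s)}$. This leads to the integral inequality
\[
\lVert X^{\varepsilon,x}(t)\rVert_{L^{pr}} \leq \lVert x\rVert + T\lVert \mu_\varepsilon(0,0)\rVert + \sqrt{T(d+2pr)} + c\int_0^t \lVert X^{\varepsilon,x}(s)\rVert_{L^{pr}}\,ds,
\]
and Gronwall's lemma delivers item (i).

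For item (ii), I subtract \eqref{a04} for $\varepsilon$ and $0$, and split the integrand as $\Delta_1 + \Delta_2$, where $\Delta_1 := \E[\mu_\varepsilon(y,X^{\varepsilon,x}(s))]|_{y=X^{\varepsilon,x}(s)} - \E[\mu_0(y,X^{\varepsilon,x}(s))]|_{y=X^{\varepsilon,x}(s)}$ and $\Delta_2$ is the remaining telescoping piece. The perturbation bound \eqref{a03} controls $\Delta_1$ pointwise by $b\varepsilon + (b\varepsilon/2)\lVert X^{\varepsilon,x}(s)\rVert^r + (b\varepsilon/2)\E[\lVert X^{\varepsilon,x}(s)\rVert^r]$; after taking $L^p$-norms, applying the identity $\lVert\lVert X\rVert^r\rVert_{L^p} = \lVert X\rVert_{L^{pr}}^r$, Jensen, and item (i), the $\Delta_1$-contribution is bounded by a constant of order $Tb\varepsilon(1+M)^r e^{rcT}$ with $M := \lVert x\rVert + T\lVert \mu_\varepsilon(0,0)\rVert + \sqrt{T(d+2pr)}$. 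For $\Delta_2$, the drift-in-measure analysis already carried out in the proof of \cref{a01b} shows Lipschitz continuity in the state argument (constant $c/2$) and in the Wasserstein-$2$ argument (constant $c/2$); combined with $\mathbf{W}_2(\P_{X^{\varepsilon,x}(s)},\P_{X^{0,x}(s)}) \leq \lVert X^{\varepsilon,x}(s) - X^{0,x}(s)\rVert_{L^2}$, this yields $\lVert \Delta_2\rVert_{L^p} \leq c\,\lVert X^{\varepsilon,x}(s) - X^{0,x}(s)\rVert_{L^{\max(p,2)}}$. Gronwall's lemma then produces item (ii); the passage from $\lVert \mu_\varepsilon(0,0)\rVert$ to $\lVert \mu_0(0,0)\rVert$ (using \eqref{a03} at the origin) together with $1 + a^r \leq (1+a)^r$ for $r \in \N$, $a \geq 0$, brings the bound into the stated form with factor $e^{(r+1)cT}$.

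For item (iii), the triangle inequality together with \eqref{a04b} gives
\[
\lVert f_\varepsilon(X^{\varepsilon,x}(t)) - f_0(X^{0,x}(t))\rVert_{L^p} \leq \varepsilon\bigl(1 + \lVert X^{\varepsilon,x}(t)\rVert_{L^{pr}}^r\bigr) + c\,\lVert X^{\varepsilon,x}(t) - X^{0,x}(t)\rVert_{L^p},
\]
which is controlled by items (i) and (ii) respectively; collecting constants (the extra Lipschitz constant $c$ converts $e^{(r+1)cT}$ into $e^{(r+2)cT}$) yields the claim. The main obstacle I anticipate is careful bookkeeping across the various $L^q$-norms ($L^p$, $L^{pr}$, $L^{\max(p,2)}$) appearing at intermediate steps, and tracking constants so that the final bounds exhibit precisely the prescribed dependence on $\lVert \mu_0(0,0)\rVert$ (resp.\ $\max\{\lVert \mu_\varepsilon(0,0)\rVert, \lVert \mu_0(0,0)\rVert\}$ in item (iii)); the SDE-analytic substance is otherwise standard.
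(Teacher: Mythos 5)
Your items (i) and (iii) follow the paper's argument essentially verbatim (same $L^{pr}$ bound for $W(t)$, same Gr\"onwall scheme, same splitting $f_\varepsilon(X^{\varepsilon,x})-f_0(X^{\varepsilon,x})$ plus $f_0(X^{\varepsilon,x})-f_0(X^{0,x})$ with $1+cT\le e^{cT}$). The gaps are in item (ii), and they matter because the claim is a precise quantitative inequality. First, you telescope in the opposite order from the paper: you evaluate the coefficient perturbation $\mu_\varepsilon-\mu_0$ along $X^{\varepsilon,x}$, so after item (i) your Gr\"onwall driver carries $\lVert\mu_\varepsilon(0,0)\rVert$, whereas the statement carries $\lVert\mu_0(0,0)\rVert$. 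Your proposed repair via \eqref{a03} at the origin only gives $\lVert\mu_\varepsilon(0,0)\rVert\le\lVert\mu_0(0,0)\rVert+b\varepsilon$, which inserts an extra $Tb\varepsilon$ inside the $r$-th power, i.e.\ a factor $(1+Tb\varepsilon/A)^r$ with $A$ independent of $b$; since $b\in[1,\infty)$ is arbitrary and the exponential budget $e^{(r+1)cT}$ is already fully used ($e^{rcT}$ from the moment bound, $e^{cT}$ from Gr\"onwall), this factor cannot be absorbed, so your route only yields a weaker bound than stated. The paper telescopes the other way: it changes the process first under $\mu_\varepsilon$ (cost $c\lVert X^{\varepsilon,x}(s)-X^{0,x}(s)\rVert_{L^p}$ by \eqref{a02}) and evaluates the coefficient difference along $X^{0,x}$, so item (i) is invoked with $\varepsilon=0$ and $\lVert\mu_0(0,0)\rVert$ appears exactly as claimed.

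Second, your treatment of the measure-dependence term $\Delta_2$ through Lipschitzness in $\mathbf{W}_2$ plus $\mathbf{W}_2\le\lVert X^{\varepsilon,x}(s)-X^{0,x}(s)\rVert_{L^2}$ produces a Gr\"onwall inequality with $\lVert X^{\varepsilon,x}(s)-X^{0,x}(s)\rVert_{L^{\max(p,2)}}$ on the right and the $L^p$-norm on the left. For $p=1$ this does not close; the natural detour (prove the case $p=2$ and use $\lVert\cdot\rVert_{L^1}\le\lVert\cdot\rVert_{L^2}$) replaces $\sqrt{T(d+2r)}$ by $\sqrt{T(d+4r)}$ and again misses the stated constant. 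The paper avoids the Wasserstein detour altogether: it merges the two conditional expectations into one inner expectation over (an independent copy of) the synchronously coupled pair $(X^{\varepsilon,x}(s),X^{0,x}(s))$, applies Jensen and disintegration so that every norm stays in $L^p$, and then \eqref{a02} delivers the Lipschitz constant $c$ directly at the level of $\lVert X^{\varepsilon,x}(s)-X^{0,x}(s)\rVert_{L^p}$. Both defects are repairable by adopting these two changes, but as written your item (ii) does not establish the inequality in the form stated.
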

\begin{proof}[Proof of \cref{a01}]
First, note for all $t\in (0,T]$ that $\left\lVert \frac{W(t)}{\sqrt{t}}\right\rVert^2$ is chi-squared distributed with $d$ degrees of freedom. This and Jensen's inequality show for all $t\in (0,T]$
 that
\begin{align}
\left(
\E \!\left[
\left\lVert W(t)\right\rVert^{{pr}}
\right]\right)^2\leq 
\E \!\left[
\left\lVert W(t)\right\rVert^{2pr}
\right]=(2t)^{{pr}}\frac{\Gamma(0.5d+{pr})}{\Gamma(0.5d)}
=(2t)^{{pr}}\prod_{k=0}^{{pr}-1}(0.5d+k)
\end{align} and
\begin{align}
\left\lVert
W(t)
\right\rVert_{L^{{pr}}(\P;\R^d)}
&=\left(
\E \!\left[
\left\lVert W(t)\right\rVert^{{pr}}
\right]\right)^\frac{2}{2pr}
\leq (2t)^\frac{1}{2}
\left(\prod_{k=0}^{{pr}-1}(0.5d+k)\right)^\frac{1}{2pr}
\nonumber
\\&\leq \sqrt{2t\left(\frac{d}{2}+{pr}-1\right)}\leq \sqrt{t(d+2pr)}.\label{a05}
\end{align}
Thus, the triangle inequality, Jensen's inequality, the disintegration theorem, and \eqref{a02} imply for all $\varepsilon\in [0,1)$, 
$t\in[0,T]$,
$x\in \R^d$ that
\begin{align} 
\left\lVert 
X^{\varepsilon,x}(t)\right\rVert_{L^{pr}(\P;\R^d)}&\leq 
\lVert x\rVert+\int_{0}^{t}
\left\lVert
\E\!\left[\mu_\varepsilon(y,X^{\varepsilon,x}(s))\right]\bigr|_{y=X^{\varepsilon,x}(s)}
\right\rVert_{L^{pr}(\P;\R^d)}
ds +\left\lVert W(t)\right\rVert_{L^{pr}(\P;\R^d)}\nonumber\\
&\leq 
 \lVert x\rVert+\int_{0}^{t}
\left\lVert
\left\lVert\mu_\varepsilon(y,X^{\varepsilon,x}(s))\right\rVert_{L^{pr}(\P;\R^d)} \bigr|_{y=X^{\varepsilon,x}(s)}
\right\rVert_{L^{pr}(\P;\R^d)}
ds +\left\lVert W(t)\right\rVert_{L^{pr}(\P;\R^d)}\nonumber\\
&
\leq \lVert x\rVert+\int_{0}^{t}
\left\lVert\mu_\varepsilon(X^{\varepsilon,x}(s),X^{\varepsilon,x}(s))\right\rVert_{L^{pr}(\P;\R^d)}ds+\xeqref{a05}
\sqrt{t(d+2pr)}\nonumber\\
&\leq \lVert x\rVert+\int_{0}^{t}\left(\xeqref{a02}
\left\lVert\mu_\varepsilon(0,0)\right\rVert+c
\left\lVert X^{\varepsilon,x}(s)\right\rVert_{L^{pr}(\P;\R^d)}\right)ds+\sqrt{t(d+2pr)}\nonumber\\
&\leq \lVert x\rVert+T\left\lVert\mu_\varepsilon(0,0)\right\rVert+\sqrt{T(d+2pr)}+
c
\int_{0}^{t}
\left\lVert X^{\varepsilon,x}(s)\right\rVert_{L^{pr}(\P;\R^d)}ds.
\end{align}
This, the fact that
$\forall\,\varepsilon\in [0,1), x\in\R^d\colon \int_{0}^{T} \left\lVert X^{\varepsilon,x}(t)\right\rVert_{L^{pr}(\P;\R^d)}\,dt <\infty$, and Gr\"onwall's inequality prove for all
$\varepsilon\in [0,1)$, 
$t\in [0,T]$,
$x\in\R^d$ that
\begin{align}
\left\lVert 
X^{\varepsilon,x}(t)\right\rVert_{L^{pr}(\P;\R^d)}\leq
\left(
\lVert x\rVert+T\left\lVert\mu_\varepsilon(0,0)\right\rVert+\sqrt{T(d+2pr)}\right)e^{ct}.\label{a07}
\end{align}
This shows \eqref{a07b}.

Next, \eqref{a03}, \eqref{a07}, and the triangle inequality prove for all 
$\varepsilon\in [0,1)$, 
$s\in [0,T]$,
$x\in\R^d$ that
\begin{align}  
&
\left\lVert\mu_\varepsilon(X^{0,x}(s),X^{0,x}(s))-\mu_0(X^{0,x}(s),X^{0,x}(s))\right\rVert_{L^p(\P;\R^d)}\nonumber
\\
&=
\Bigl\lVert
\left\lVert
\mu_\varepsilon(X^{0,x}(s),X^{0,x}(s))-\mu_0(X^{0,x}(s),X^{0,x}(s))
\right\rVert
\Bigr\rVert_{L^p(\P;\R)}\nonumber
\\
&\leq \xeqref{a03}
\left\lVert b\varepsilon+
0.5b\varepsilon\left\lVert X^{0,x}(s)\right\rVert^r+
0.5b\varepsilon\left\lVert X^{0,x}(s)\right\rVert^r
\right\rVert_{L^p(\P;\R)}\nonumber
\\
&\leq b\varepsilon+b\varepsilon
\left\lVert \left\lVert X^{0,x}(s)\right\rVert^r
\right\rVert_{L^p(\P;\R)}\nonumber\\
&
= b\varepsilon+b\varepsilon
\left(\E\!\left[ \left\lVert X^{0,x}(s)\right\rVert^{pr}\right]\right)^\frac{1}{p}\nonumber\\&=
b\varepsilon+
b\varepsilon\left\lVert X^{0,x}(s)\right\rVert_{L^{pr}(\P;\R^d)}^r\nonumber\\&\leq
b\varepsilon+ b\varepsilon\xeqref{a07}
\left(
\lVert x\rVert+T\left\lVert\mu_0(0,0)\right\rVert+\sqrt{T(d+2pr)}\right)^re^{rcT}\nonumber\\
&
\leq 
b\varepsilon
\left(1+
\lVert x\rVert+T\left\lVert\mu_0(0,0)\right\rVert+\sqrt{T(d+2pr)}\right)^re^{rcT}
.
\label{a08}\end{align}
Hence, 
\eqref{a04}, 
the triangle inequality, Jensen's inequality, the disintegration theorem, and \eqref{a02} show for all
$\varepsilon\in [0,1)$, 
$t\in [0,T]$,
$x\in\R^d$ that
\begin{align}  
&\left\lVert
X^{\varepsilon,x}(t)-X^{0,x}(t)\right\rVert_{L^p(\P;\R^d)}\nonumber\\
&
\leq \int_{0}^{t}\left\lVert
\E\!\left[\mu_\varepsilon(y,X^{\varepsilon,x}(s))\right]\bigr|_{y=X^{\varepsilon,x}(s)}
-
\E\!\left[\mu_0(z,X^{0,x}(s))\right]\bigr|_{z=X^{0,x}(s)}
\right\rVert_{L^p(\P;\R^d)} ds\nonumber\\
& \leq \int_{0}^{t}\left\lVert
\E\!\left[\mu_\varepsilon(y,X^{\varepsilon,x}(s))-\mu_0(z,X^{0,x}(s))\right]\bigr|_{y=X^{\varepsilon,x}(s) ,z=X^{0,x}(s)}
\right\rVert_{L^p(\P;\R^d)} ds\nonumber\\
& = \int_{0}^{t}\left\lVert
\left\lVert
\E\!\left[\mu_\varepsilon(y,X^{\varepsilon,x}(s))-\mu_0(z,X^{0,x}(s))\right]\right\rVert\bigr|_{y=X^{\varepsilon,x}(s) ,z=X^{0,x}(s)}
\right\rVert_{L^p(\P;\R)} ds\nonumber\\
& \leq  \int_{0}^{t}\left\lVert
\left\lVert\mu_\varepsilon(y,X^{\varepsilon,x}(s))-\mu_0(z,X^{0,x}(s))\right\rVert_{L^p(\P;\R^d)}\bigr|_{y=X^{\varepsilon,x}(s) ,z=X^{0,x}(s)}
\right\rVert_{L^p(\P;\R)} ds\nonumber\\
&=
\int_{0}^{t}
\left\lVert\mu_\varepsilon(X^{\varepsilon,x}(s),X^{\varepsilon,x}(s))-\mu_0(X^{0,x}(s),X^{0,x}(s))\right\rVert_{L^p(\P;\R^d)}ds\nonumber\\
&\leq 
\int_{0}^{t}\Bigl[
\left\lVert\mu_\varepsilon(X^{\varepsilon,x}(s),X^{\varepsilon,x}(s))-\mu_\varepsilon(X^{0,x}(s),X^{0,x}(s))\right\rVert_{L^p(\P;\R^d)}\nonumber\\
&\qquad \qquad\qquad 
+
\left\lVert\mu_\varepsilon(X^{0,x}(s),X^{0,x}(s))-\mu_0(X^{0,x}(s),X^{0,x}(s))\right\rVert_{L^p(\P;\R^d)}
\Bigr]
ds\nonumber\\
&\leq c\int_{0}^{t}
\left\lVert X^{\varepsilon,x}(s)-X^{0,x}(s)
\right\rVert_{L^p(\P;\R^d)}
ds +T\xeqref{a08}b\varepsilon \left(1+
\lVert x\rVert+T\left\lVert\mu_0(0,0)\right\rVert+\sqrt{T(d+2pr)}\right)^r e^{rcT}.
\end{align}
This, 
the fact that
$\forall\,\varepsilon\in [0,1), x\in\R^d\colon \int_{0}^{T} \left\lVert X^{\varepsilon,x}(t)\right\rVert_{L^{pq}(\P;\R^d)}dt <\infty$, and
Gr\"onwall's inequality show for all
$\varepsilon\in [0,1)$, 
$t\in [0,T]$,
$x\in\R^d$ that
\begin{align} \begin{split} 
\left\lVert
X^{\varepsilon,x}(t)-X^{0,x}(t)\right\rVert_{L^p(\P;\R^d)}&\leq 
Tb\varepsilon \left(1+
\lVert x\rVert+T\left\lVert\mu_0(0,0)\right\rVert+\sqrt{T(d+2pr)}\right)^re^{rcT}e^{ct}\\
&\leq
Tb\varepsilon \left(1+
\lVert x\rVert+T\left\lVert\mu_0(0,0)\right\rVert+\sqrt{T(d+2pr)}\right)^re^{(r+1)cT}.
 \end{split}\label{a12}
\end{align}
This shows \eqref{a11}. 

Next, \eqref{a04b}, the triangle inequality, and  \eqref{a07} show for all $\varepsilon\in (0,1)$, $t\in [0,T]$, $x\in \R^d$ that
\begin{align} 
\left\lVert
f_\varepsilon(X^{\varepsilon,x}(t))
-
f_0(X^{\varepsilon,x}(t))\right\rVert_{L^p(\P;\R)}
&\leq \varepsilon\xeqref{a04b}\left(
\left\lVert
1+
\left\lVert
X^{\varepsilon,x}(t)
\right\rVert^r
\right\rVert_{L^p(\P;\R)}
\right)\nonumber\\&\leq 
\varepsilon\left(
1+\left\lVert
\left\lVert
X^{\varepsilon,x}(t)
\right\rVert^r
\right\rVert_{L^p(\P;\R)}
\right)\nonumber\\
&=
\varepsilon\left(
1+\left\lVert
X^{\varepsilon,x}(t)
\right\rVert_{L^{pr}(\P;\R^d)}^r
\right)\nonumber
\\&\leq 
\varepsilon\left(
1+\left(
\xeqref{a07}
\left(
\lVert x\rVert+T\left\lVert\mu_\varepsilon(0,0)\right\rVert+\sqrt{T(d+2pr)}\right)e^{cT}
\right)^r
\right)\nonumber
\\
&=\varepsilon\left(
1+
\left(
\lVert x\rVert+T\left\lVert\mu_\varepsilon(0,0)\right\rVert+\sqrt{T(d+2pr)}\right)^re^{rcT}
\right)\nonumber\\
&\leq 
\varepsilon
\left(1+
\lVert x\rVert+T\left\lVert\mu_\varepsilon(0,0)\right\rVert+\sqrt{T(d+2pr)}\right)^re^{rcT}.
\label{a14}
\end{align}
In addition, \eqref{a04b} and \eqref{a12}
prove for all $\varepsilon\in (0,1)$, $t\in [0,T]$, $x\in \R^d$ that
\begin{align} \begin{split} 
\left\lVert
f_0(X^{\varepsilon,x}(t))-
f_0(X^{0,x}(t))\right\rVert_{L^p(\P;\R)}&\leq \xeqref{a04b}c
\left\lVert
X^{\varepsilon,x}(t)-X^{0,x}(t)
\right\rVert_{L^p(\P;\R^d)}\\
&
\leq c \xeqref{a12}
Tb\varepsilon \left(1+
\lVert x\rVert+T\left\lVert\mu_0(0,0)\right\rVert+\sqrt{T(d+2pr)}\right)^re^{(r+1)cT}.
\end{split}\label{a15}
\end{align}
This, the triangle inequality,  \eqref{a14}, and the fact that $1+cT\leq e^{cT}$
imply for all $\varepsilon\in (0,1)$, $t\in [0,T]$, $x\in \R^d$ that
\begin{align} \begin{split} 
&
\left\lVert f_\varepsilon(X^{\varepsilon,x}(t))
-
f_0(X^{0,x}(t))\right\rVert_{L^p(\P;\R)}
\\&\leq 
\left\lVert
f_\varepsilon(X^{\varepsilon,x}(t))
-
f_0(X^{\varepsilon,x}(t))\right\rVert_{L^p(\P;\R)}
+
\left\lVert
f_0(X^{\varepsilon,x}(t))-
f_0(X^{0,x}(t))\right\rVert_{L^p(\P;\R)}
\\
&\leq \xeqref{a14}
\varepsilon
\left(1+
\lVert x\rVert+T\left\lVert\mu_\varepsilon(0,0)\right\rVert+\sqrt{T(d+2pr)}\right)^re^{rcT}\\
&\quad 
+\xeqref{a15}
c 
Tb\varepsilon \left(1+
\lVert x\rVert+T\left\lVert\mu_0(0,0)\right\rVert+\sqrt{T(d+2pr)}\right)^re^{(r+1)cT}\\
&\leq 
b\varepsilon \left(1+
\lVert x\rVert+T
\max\{
\left\lVert\mu_\varepsilon(0,0)\right\rVert,
\left\lVert\mu_0(0,0)\right\rVert
\}
+\sqrt{T(d+2pr)}\right)^re^{(r+2)cT}.
\end{split}
\end{align}
This shows \eqref{c01}.
The proof of \cref{a01} is thus completed.
\end{proof}

\section{Deep Neural Networks}\label{s04}
\subsection{Basic DNN calculus}
For the proof of our main result in this section, \cref{k12,s14}, we need several auxiliary results, Lemmas \ref{k43}--\ref{b04}, which are basic facts on DNNs. The proof of \cref{k43}--\ref{b04} can be found in 
\cite{CHW2022,HJKN2020a,NNW2023} and therefore omitted.
\begin{setting}\label{m07b}
Assume \cref{m07}.
Let $\odot \colon \mathbf{D}\times \mathbf{D} \to\mathbf{D} $ satisfy 
for all $H_1,H_2\in \N$, $ \alpha=(\alpha_0,\alpha_1,\ldots,\alpha_{H_1},\alpha_{H_1+1})\in\N^{H_1+2}$, $\beta=(\beta_0,\beta_1,\ldots,\beta_{H_2},\beta_{H_2+1})\in\N^{H_2+2}$
that
$
\alpha\odot \beta= (\beta_{0},\beta_{1},\ldots,\beta_{H_2},\beta_{H_2+1}+\alpha_{0},\alpha_{1},\alpha_{2},\ldots,\alpha_{H_1+1})\in \N^{H_1+H_2+3}
$.
Let $\boxplus \colon \mathbf{D}\times \mathbf{D} \to\mathbf{D}  $ satisfy
for all $H\in \N$, 
$\alpha= (\alpha_0,\alpha_1,\ldots,\alpha_{H},\alpha_{H+1})\in \N^{H+2}$,
$\beta= (\beta_0,\beta_1,\beta_2,\ldots,\beta_{H},\beta_{H+1})\in \N^{H+2}$
that
$
\alpha \boxplus \beta =(\alpha_0,\alpha_1+\beta_1,\ldots,\alpha_{H}+\beta_{H},\beta_{H+1})\in \N^{H+2}.
$
Let $\boxdot\colon \mathbf{D}\times\mathbf{D}\to \mathbf{D}$ satisfy for all $H\in \N$, 
$\alpha= (\alpha_0,\alpha_1,\ldots,\alpha_{H},\alpha_{H+1})\in \N^{H+2}$,
$\beta= (\beta_0,\beta_1,\beta_2,\ldots,\beta_{H},\beta_{H+1})\in \N^{H+2}$
that $\alpha \boxdot \beta =(\alpha_{0},\alpha_1+\beta_1,\ldots,\alpha_H+\beta_H, \alpha_{H+1}+\beta_{H+1} )\in \N^{H+2}$.
Let $\mathfrak{n}_n^d
\in \mathbf{D} $, $n\in [3,\infty)\cap\Z$, $d\in \N$,  satisfy for all $n\in [3,\infty)\cap\N$, $d\in \N$ that
\begin{align}
\label{k07}
\mathfrak{n}_n^d= (d,\underbrace{2d,\ldots,2d}_{(n-2)\text{ times}},d)\in \N^n
.
\end{align} Let $\mathfrak{n}_n\in \mathbf{D}$, $n\in [3,\infty)$, satisfy for all
$n\in [3,\infty)$ that $\mathfrak{n}_n=\mathfrak{n}^1_n$.

\end{setting}

\begin{lemma}[$\odot$ is associative--{\cite[Lemma 3.3]{HJKN2020a}}]\label{k43}Assume \cref{m07b} and let $\alpha,\beta,\gamma\in \bfD$. Then we have that
$(\alpha\odot\beta)\odot \gamma
= \alpha\odot(\beta\odot \gamma)$.
\end{lemma}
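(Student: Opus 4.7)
The plan is to verify associativity directly from the definition of $\odot$ by writing out both $(\alpha\odot\beta)\odot\gamma$ and $\alpha\odot(\beta\odot\gamma)$ coordinate-by-coordinate and observing that the two tuples agree term for term. Since $\odot$ acts on layer-dimension vectors in $\bfD=\bigcup_{H\in\N}\N^{H+2}$ by concatenation, where the trailing entry of the right operand is summed with the leading entry of the left operand, the whole argument is a bookkeeping exercise; no properties of the network realization map $\calR$ are needed.

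Concretely, I would fix $H_1,H_2,H_3\in\N$ and write
$\alpha=(\alpha_0,\ldots,\alpha_{H_1+1})\in\N^{H_1+2}$,
$\beta=(\beta_0,\ldots,\beta_{H_2+1})\in\N^{H_2+2}$,
$\gamma=(\gamma_0,\ldots,\gamma_{H_3+1})\in\N^{H_3+2}$.
First I compute
\begin{align*}
\alpha\odot\beta=(\beta_0,\beta_1,\ldots,\beta_{H_2},\beta_{H_2+1}+\alpha_0,\alpha_1,\ldots,\alpha_{H_1+1})\in\N^{H_1+H_2+3},
\end{align*}
noting that its first entry is $\beta_0$ and its last entry is $\alpha_{H_1+1}$. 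Applying the definition of $\odot$ once more with $\gamma$ on the right then yields
\begin{align*}
(\alpha\odot\beta)\odot\gamma=(\gamma_0,\ldots,\gamma_{H_3},\gamma_{H_3+1}+\beta_0,\beta_1,\ldots,\beta_{H_2},\beta_{H_2+1}+\alpha_0,\alpha_1,\ldots,\alpha_{H_1+1}).
\end{align*}

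Symmetrically, I compute
\begin{align*}
\beta\odot\gamma=(\gamma_0,\ldots,\gamma_{H_3},\gamma_{H_3+1}+\beta_0,\beta_1,\ldots,\beta_{H_2+1})\in\N^{H_2+H_3+3},
\end{align*}
whose last entry is $\beta_{H_2+1}$, and therefore
\begin{align*}
\alpha\odot(\beta\odot\gamma)=(\gamma_0,\ldots,\gamma_{H_3},\gamma_{H_3+1}+\beta_0,\beta_1,\ldots,\beta_{H_2},\beta_{H_2+1}+\alpha_0,\alpha_1,\ldots,\alpha_{H_1+1}).
\end{align*}
Comparing the two expressions entry by entry, they coincide; both tuples live in $\N^{H_1+H_2+H_3+4}$, the two summation points occur at indices $H_3+1$ and $H_2+H_3+2$ in either case, and the summed values are $\gamma_{H_3+1}+\beta_0$ and $\beta_{H_2+1}+\alpha_0$ respectively. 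This gives $(\alpha\odot\beta)\odot\gamma=\alpha\odot(\beta\odot\gamma)$ as required.

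The only subtle point — and the one I would be careful about when writing the proof cleanly — is making sure the positions of the two merging entries are tracked correctly in both bracketings, since $\odot$ is not symmetric and reverses the apparent left-right order of its arguments (the right operand's entries appear first in the output). Once that index bookkeeping is pinned down, the identity is immediate and no further machinery is required.
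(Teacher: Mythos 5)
Your computation is correct: both bracketings expand to the tuple $(\gamma_0,\ldots,\gamma_{H_3},\gamma_{H_3+1}+\beta_0,\beta_1,\ldots,\beta_{H_2},\beta_{H_2+1}+\alpha_0,\alpha_1,\ldots,\alpha_{H_1+1})\in\N^{H_1+H_2+H_3+4}$, and the index bookkeeping at the two merged entries is tracked accurately. The paper itself omits the proof and cites \cite{HJKN2020a}, where the argument is exactly this direct entrywise verification from the definition of $\odot$, so your approach coincides with the standard one.
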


\begin{lemma}[$\boxplus$ and associativity--{\cite[Lemma 3.4]{HJKN2020a}}]Assume \cref{m07b},
let $H,k,l \in \N$, and let $\alpha,\beta,\gamma\in \left( \{k\}\times \N^{H} \times \{l\}\right)$.
Then
\begin{enumerate}[(i)]
\item we have that $\alpha\boxplus\beta\in \left(\{k\}\times \N^{H} \times \{l\}\right)$,
\item we have that $\beta\boxplus \gamma\in \left(\{k\}\times \N^{H} \times \{l\}\right)$, and 
\item we have that $(\alpha\boxplus\beta)\boxplus \gamma
= \alpha\boxplus(\beta\boxplus \gamma)$.
\end{enumerate}
\end{lemma}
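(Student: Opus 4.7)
The plan is to prove all three items by a direct unpacking of the definition of $\boxplus$ given in \cref{m07b}. Since $\alpha,\beta,\gamma\in \{k\}\times\N^{H}\times\{l\}$, I would first write $\alpha=(k,\alpha_1,\ldots,\alpha_H,l)$, $\beta=(k,\beta_1,\ldots,\beta_H,l)$, and $\gamma=(k,\gamma_1,\ldots,\gamma_H,l)$, so that $\alpha_0=\beta_0=\gamma_0=k$ and $\alpha_{H+1}=\beta_{H+1}=\gamma_{H+1}=l$.

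For item (i), the definition of $\boxplus$ in \cref{m07b} yields
\begin{align*}
\alpha\boxplus\beta
=(\alpha_0,\alpha_1+\beta_1,\ldots,\alpha_H+\beta_H,\beta_{H+1})
=(k,\alpha_1+\beta_1,\ldots,\alpha_H+\beta_H,l),
\end{align*}
which lies in $\{k\}\times\N^{H}\times\{l\}$ because $\N$ is closed under addition. Item (ii) follows by the identical calculation applied to the pair $(\beta,\gamma)$ in place of $(\alpha,\beta)$.

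For item (iii), I would use items (i) and (ii) to conclude that both $\alpha\boxplus\beta$ and $\beta\boxplus\gamma$ again lie in $\{k\}\times\N^{H}\times\{l\}$, so that $\boxplus$ can be applied once more. A second application of the definition, combined with the associativity of addition on $\N$, shows that both $(\alpha\boxplus\beta)\boxplus\gamma$ and $\alpha\boxplus(\beta\boxplus\gamma)$ reduce to the single tuple $(k,\alpha_1+\beta_1+\gamma_1,\ldots,\alpha_H+\beta_H+\gamma_H,l)$, which establishes equality.

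The argument has no genuine obstacle; it is purely index bookkeeping. The only small point worth flagging is the asymmetry in the definition of $\boxplus$ (the first coordinate is inherited from the left operand while the last is inherited from the right operand), but since all three input tuples share the common boundary values $k$ and $l$, this asymmetry has no effect on the outputs in either order of evaluation.
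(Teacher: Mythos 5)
Your proof is correct: the paper itself omits the argument (citing Lemma 3.4 of \cite{HJKN2020a}), and your direct unpacking of the definition of $\boxplus$ — closedness of $\N$ under addition for (i) and (ii), associativity of addition in each coordinate for (iii), with the matching boundary entries $k$ and $l$ handling the left/right asymmetry of the definition — is exactly the intended elementary bookkeeping.
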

 \begin{lemma}[Triangle inequality--{\cite[Lemma 3.5]{HJKN2020a}}]\label{b15}
Assume \cref{m07b},
let $H,k,l \in \N$, and let $\alpha,\beta\in \{k\}\times \N^{H} \times \{l\}$.
Then we have that
$\supnorm{\alpha\boxplus\beta}\leq\supnorm{\alpha}+
\supnorm{\beta} $.
\end{lemma}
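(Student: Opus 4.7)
The plan is to unfold the definition of $\boxplus$ under the hypothesis and verify the inequality coordinate by coordinate. Since $\alpha,\beta\in\{k\}\times \N^{H}\times\{l\}$, I can write $\alpha=(k,\alpha_1,\ldots,\alpha_H,l)$ and $\beta=(k,\beta_1,\ldots,\beta_H,l)$, so by the definition of $\boxplus$ in \cref{m07b} we have
\begin{align}
\alpha\boxplus\beta=(k,\alpha_1+\beta_1,\ldots,\alpha_H+\beta_H,l)\in\{k\}\times\N^{H}\times\{l\}.
\end{align}
Because $\supnorm{\cdot}$ is defined as the maximum of the absolute values of the coordinates, and the entries here are positive integers, the task reduces to bounding every one of the $H+2$ entries of $\alpha\boxplus\beta$ by $\supnorm{\alpha}+\supnorm{\beta}$.

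First I would handle the two boundary coordinates. The first entry of $\alpha\boxplus\beta$ is $k=\alpha_0\leq \supnorm{\alpha}$, and the last entry is $l=\beta_{H+1}\leq \supnorm{\beta}$; each is therefore bounded by $\supnorm{\alpha}+\supnorm{\beta}$ since all entries are nonnegative. Then I would handle the interior coordinates: for every $i\in\{1,\ldots,H\}$,
\begin{align}
(\alpha\boxplus\beta)_i=\alpha_i+\beta_i\leq \supnorm{\alpha}+\supnorm{\beta}.
\end{align}
Taking the maximum over all $H+2$ coordinates on the left then yields $\supnorm{\alpha\boxplus\beta}\leq\supnorm{\alpha}+\supnorm{\beta}$, which is the claim.

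The only place that asks for any care at all is the slight asymmetry in the definition of $\boxplus$, namely that the first coordinate of $\alpha\boxplus\beta$ is taken from $\alpha$ while the last is taken from $\beta$. This is precisely why the hypothesis $\alpha,\beta\in\{k\}\times \N^{H}\times\{l\}$ is imposed: it guarantees $\alpha_0=\beta_0=k$ and $\alpha_{H+1}=\beta_{H+1}=l$, so the boundary entries behave symmetrically. I expect no real obstacle; the lemma is a direct componentwise chase of a few lines and serves purely as a bookkeeping tool for bounding layer widths of sums of neural networks later on.
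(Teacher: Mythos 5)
Your componentwise argument is correct and complete: unfolding the definition of $\boxplus$, using $\alpha_0=\beta_0=k$ and $\alpha_{H+1}=\beta_{H+1}=l$ to handle the boundary entries, and bounding each interior entry $\alpha_i+\beta_i$ by $\supnorm{\alpha}+\supnorm{\beta}$ is exactly the standard proof. The paper itself omits the proof and cites \cite[Lemma~3.5]{HJKN2020a}, whose argument is this same coordinatewise check, so there is nothing to add.
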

\begin{lemma}[DNNs for affine transformations--{\cite[Lemma 3.7]{HJKN2020a}}]\label{p01}
Assume \cref{m07} and let $d,m\in \N$, 
$\lambda\in \R$,
$b\in\R^d$, $a\in\R^m$, $\Psi\in\mathbf{N}$ satisfy that $\mathcal{R}(\Psi)\in C(\R^d,\R^m)$. Then we have that
$
\lambda\left((\mathcal{R}(\Psi))(\cdot +b)+a\right)\in \mathcal{R}(\{\Phi\in\mathbf{N}\colon \mathcal{D}(\Phi)=\mathcal{D}(\Psi)\}).
$
\end{lemma}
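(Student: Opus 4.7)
The plan is to construct the desired DNN $\Phi$ explicitly by modifying only the first and last affine layers of $\Psi$, leaving the architecture vector $\mathcal{D}(\Psi)$ untouched. The shift of the input by $b$ will be absorbed into the bias of the first layer, while the outer scaling by $\lambda$ and the additive offset $a$ will be absorbed into the last affine layer.

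More concretely, I will write $\Psi=((W_1,B_1),(W_2,B_2),\ldots,(W_{H+1},B_{H+1}))\in\prod_{n=1}^{H+1}(\R^{k_n\times k_{n-1}}\times\R^{k_n})$ where $(k_0,k_1,\ldots,k_{H+1})=\mathcal{D}(\Psi)$, with $k_0=d$ and $k_{H+1}=m$. I then define
\begin{align*}
\Phi:=\bigl((W_1,\,B_1+W_1 b),\,(W_2,B_2),\,\ldots,\,(W_H,B_H),\,(\lambda W_{H+1},\,\lambda B_{H+1}+\lambda a)\bigr).
\end{align*}
Since each block of $\Phi$ lives in the same space $\R^{k_n\times k_{n-1}}\times\R^{k_n}$ as the corresponding block of $\Psi$, we have $\Phi\in\mathbf{N}$ and $\mathcal{D}(\Phi)=\mathcal{D}(\Psi)$.

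To verify that $\mathcal{R}(\Phi)$ realizes $x\mapsto\lambda[(\mathcal{R}(\Psi))(x+b)+a]$, I will track the hidden activations. For $x\in\R^d$, let $x_0^{\Psi}=x+b$ and $x_n^{\Psi}=\mathbf{A}_{k_n}(W_n x_{n-1}^{\Psi}+B_n)$ for $n\in[1,H]\cap\N$, so that $(\mathcal{R}(\Psi))(x+b)=W_{H+1}x_H^{\Psi}+B_{H+1}$ by \cref{m07}. For $\Phi$, set $x_0^{\Phi}=x$; then the first hidden activation equals
\begin{align*}
x_1^{\Phi}=\mathbf{A}_{k_1}\!\bigl(W_1 x+(B_1+W_1 b)\bigr)=\mathbf{A}_{k_1}\!\bigl(W_1(x+b)+B_1\bigr)=x_1^{\Psi}.
\end{align*}
An immediate induction using that the blocks $(W_n,B_n)$ for $n\in[2,H]\cap\N$ are identical in $\Phi$ and $\Psi$ yields $x_n^{\Phi}=x_n^{\Psi}$ for every $n\in[1,H]\cap\N$. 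Applying the definition of $\mathcal{R}$ to the last (unactivated) block of $\Phi$ then gives
\begin{align*}
(\mathcal{R}(\Phi))(x)=\lambda W_{H+1}x_H^{\Phi}+\lambda B_{H+1}+\lambda a=\lambda\bigl(W_{H+1}x_H^{\Psi}+B_{H+1}\bigr)+\lambda a=\lambda\bigl[(\mathcal{R}(\Psi))(x+b)+a\bigr],
\end{align*}
which is the required identity.

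There is essentially no obstacle beyond bookkeeping: the key observation is that an affine pre-shift of the input to a fully connected layer and an affine post-transformation of an affine output layer can each be folded into the existing weight/bias data without introducing any extra layer or changing any layer width. Care is only needed in the edge case $H+1=1$, but the same construction applies, since the first and last blocks coincide and one simply sets $\Phi=((\lambda W_1,\,\lambda W_1 b+\lambda B_1+\lambda a))$; this still satisfies $\mathcal{D}(\Phi)=\mathcal{D}(\Psi)=(d,m)$ and realizes the desired function. Hence $\lambda[(\mathcal{R}(\Psi))(\cdot+b)+a]\in\mathcal{R}(\{\Phi\in\mathbf{N}\colon\mathcal{D}(\Phi)=\mathcal{D}(\Psi)\})$, completing the proof.
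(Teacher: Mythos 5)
Your construction is correct and is exactly the standard argument behind the cited result \cite[Lemma~3.7]{HJKN2020a} (the paper itself omits the proof and defers to that reference): absorb the input shift into the first bias via $B_1+W_1b$ and the outer scaling and offset into the last affine layer via $(\lambda W_{H+1},\lambda B_{H+1}+\lambda a)$, leaving $\mathcal{D}(\Phi)=\mathcal{D}(\Psi)$ unchanged. The only cosmetic remark is that the edge case $H+1=1$ you discuss cannot occur under \cref{m07}, since $H\in\N$ forces at least two affine layers, so the first and last blocks are always distinct.
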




\begin{lemma}[Composition of functions generated by DNNs--{\cite[Lemma 3.8]{HJKN2020a}}]\label{m11b}
Let \cref{m07b} be given and let $d_1,d_2,d_3\in\N$, $f\in C(\R^{d_2},\R^{d_3})$, $g\in C(  \R^{d_1}, \R^{d_2}) $, 
$\alpha,\beta\in \mathbf{D}$ satisfy that
$f\in \mathcal{R}(\{\Phi\in \mathbf{N}\colon \mathcal{D}(\Phi)=\alpha\})$
and
$g\in \mathcal{R}(\{\Phi\in \mathbf{N}\colon \mathcal{D}(\Phi)=\beta\})$.
Then we have
that $(f\circ g)\in \mathcal{R}(\{\Phi\in \mathbf{N}\colon \mathcal{D}(\Phi)=\alpha\odot\beta\})$.
\end{lemma}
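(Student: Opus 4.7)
The plan is to give a direct explicit construction of a network $\Phi$ realizing $f\circ g$ with architecture exactly $\alpha\odot\beta$. Write $\Phi^{(f)}=((W_n^f,B_n^f))_{n=1}^{H_1+1}$ and $\Phi^{(g)}=((W_n^g,B_n^g))_{n=1}^{H_2+1}$ for networks satisfying $\calR(\Phi^{(f)})=f$, $\calR(\Phi^{(g)})=g$, with $\calD(\Phi^{(f)})=\alpha=(\alpha_0,\ldots,\alpha_{H_1+1})$ and $\calD(\Phi^{(g)})=\beta=(\beta_0,\ldots,\beta_{H_2+1})$. From $f\in C(\R^{d_2},\R^{d_3})$ and $g\in C(\R^{d_1},\R^{d_2})$ we have $\beta_0=d_1$, $\beta_{H_2+1}=d_2=\alpha_0$, and $\alpha_{H_1+1}=d_3$.

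The main obstacle is the stitching at the junction. The last ``layer'' of $\Phi^{(g)}$ produces the unrectified, possibly signed output $y:=W_{H_2+1}^g x_{H_2}+B_{H_2+1}^g\in\R^{d_2}$, while $f$ begins with the affine--then--ReLU step $\mathbf{A}_{\alpha_1}(W_1^f\,\cdot\,+B_1^f)$. Naively collapsing the two affine maps would give a shorter network than the depth-$(H_1+H_2+2)$ network prescribed by $\alpha\odot\beta$. Instead I use the signed-duplication trick: replace $(W_{H_2+1}^g,B_{H_2+1}^g)$ by the stacked layer whose weight matrix has block rows $W_{H_2+1}^g$ and $-W_{H_2+1}^g$ and whose bias has stacked blocks $B_{H_2+1}^g$ and $-B_{H_2+1}^g$, producing a preactivation $(y,-y)\in\R^{2d_2}$. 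Componentwise ReLU then yields $((y)_+,(-y)_+)\in\R^{2d_2}$, from which $y=(y)_+-(-y)_+$ can be recovered linearly. The next layer uses the block weight $(W_1^f,-W_1^f)\in\R^{\alpha_1\times 2d_2}$ and bias $B_1^f$, whose preactivation equals $W_1^f(y)_+-W_1^f(-y)_++B_1^f=W_1^f y+B_1^f$, exactly the preactivation of $f$'s first hidden layer evaluated at $y=g(\text{input})$.

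Concretely I define $\Phi=((\tilde W_n,\tilde B_n))_{n=1}^{H_1+H_2+2}$ by setting $(\tilde W_n,\tilde B_n)=(W_n^g,B_n^g)$ for $n=1,\ldots,H_2$, placing the two stitching layers described above at positions $H_2+1$ and $H_2+2$, and then $(\tilde W_n,\tilde B_n)=(W_{n-H_2-1}^f,B_{n-H_2-1}^f)$ for $n=H_2+3,\ldots,H_1+H_2+2$. A direct induction on the forward pass starting from an arbitrary $x_0\in\R^{d_1}$ shows that after the first $H_2$ layers the activation coincides with $x_{H_2}^g$ from $g$'s forward pass; after the two stitching layers the activation equals $\mathbf{A}_{\alpha_1}(W_1^f g(x_0)+B_1^f)$, which is $f$'s first hidden activation evaluated at $g(x_0)$; and the remaining $H_1$ layers of $f$ then produce $(\calR(\Phi))(x_0)=f(g(x_0))$, as required. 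Reading off the layer widths in order yields $(\beta_0,\beta_1,\ldots,\beta_{H_2},2d_2,\alpha_1,\ldots,\alpha_{H_1+1})=(\beta_0,\ldots,\beta_{H_2},\beta_{H_2+1}+\alpha_0,\alpha_1,\ldots,\alpha_{H_1+1})=\alpha\odot\beta$, so $\calD(\Phi)=\alpha\odot\beta$ and the proof is complete.
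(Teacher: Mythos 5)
Your construction is correct, and it is essentially the standard argument behind this lemma: the paper itself omits the proof and cites \cite[Lemma~3.8]{HJKN2020a}, where the composition is realized by exactly this signed-duplication stitching (using $y=(y)_+-(-y)_+$ at the junction, giving the width $\beta_{H_2+1}+\alpha_0=2d_2$ prescribed by $\odot$). Both the forward-pass verification and the reading-off of $\calD(\Phi)=\alpha\odot\beta$ are accurate, so nothing is missing.
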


\begin{lemma}[Sum of DNNs of the same length--{\cite[Lemma 3.9]{HJKN2020a}}]
\label{b01b}
Assume \cref{m07b} and let $M,H,p,q\in \N$,  $h_1,h_2,\ldots,h_M\in\R$,
 $k_i\in \mathbf{D} $,
$f_i\in C(\R^{p},\R^{q})$,
$i\in [1,M]\cap\N$, satisfy 
for all $i\in [1,M]\cap\N$
that $ \dim(k_i)=H+2$ and
$f_i\in 
\mathcal{R}(\{\Phi\in\mathbf{N}\colon \mathcal{D}(\Phi)=k_i\}).
$
Then
we have that 
$
\sum_{i=1}^{M}h_if_i
\in\mathcal{R}\left(\left\{ \Phi\in\mathbf{N}\colon
\mathcal{D}(\Phi)=\boxplus_{i=1}^Mk_i\right\}\right).
$
\end{lemma}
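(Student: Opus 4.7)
The plan is to explicitly construct, from the given realizing networks $\Phi_1,\ldots,\Phi_M$ of $f_1,\ldots,f_M$, a single network $\Psi$ whose layer-dimension vector is $\boxplus_{i=1}^M k_i$ and whose realization is $\sum_{i=1}^M h_i f_i$. Write $k_i=(p,k_1^{(i)},\ldots,k_H^{(i)},q)$ (which is possible since $f_i\in C(\R^p,\R^q)$ and $\dim(k_i)=H+2$) and fix $\Phi_i=((W_1^{(i)},B_1^{(i)}),\ldots,(W_{H+1}^{(i)},B_{H+1}^{(i)}))$ with $\mathcal{R}(\Phi_i)=f_i$ and $\mathcal{D}(\Phi_i)=k_i$. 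Unwinding the definition of $\boxplus$ iteratively, using that $k_i^{(1)}=p$ and $k_i^{(H+1)}=q$ for every $i$, one obtains
\begin{align*}
\boxplus_{i=1}^M k_i=\Bigl(p,\,{\textstyle\sum_{i=1}^M} k_1^{(i)},\,\ldots,\,{\textstyle\sum_{i=1}^M} k_H^{(i)},\,q\Bigr),
\end{align*}
so the target architecture is determined.

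The network $\Psi=((V_1,C_1),\ldots,(V_{H+1},C_{H+1}))$ will be defined by stacking: in the first layer I would set
\begin{align*}
V_1=\begin{pmatrix}W_1^{(1)}\\ \vdots\\ W_1^{(M)}\end{pmatrix},\qquad C_1=\begin{pmatrix}B_1^{(1)}\\ \vdots\\ B_1^{(M)}\end{pmatrix},
\end{align*}
in every intermediate layer $n\in\{2,\ldots,H\}$ take $V_n$ to be the block-diagonal matrix with diagonal blocks $W_n^{(1)},\ldots,W_n^{(M)}$ and $C_n$ the vertical stack of $B_n^{(1)},\ldots,B_n^{(M)}$, and in the final layer set
\begin{align*}
V_{H+1}=\bigl(h_1 W_{H+1}^{(1)},\,h_2 W_{H+1}^{(2)},\,\ldots,\,h_M W_{H+1}^{(M)}\bigr),\qquad C_{H+1}=\sum_{i=1}^M h_i B_{H+1}^{(i)},
\end{align*}
which is a horizontal concatenation that contracts the $M$ parallel channels down to a single $\R^q$-valued output. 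Directly from this prescription one checks that $\mathcal{D}(\Psi)=\boxplus_{i=1}^M k_i$ and that $\Psi\in\mathbf{N}$.

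The main step is then to verify that $\mathcal{R}(\Psi)=\sum_{i=1}^M h_i f_i$. I would do this by induction on $n\in\{1,\ldots,H\}$: denoting by $x_n^{(i)}$ the activation of $\Phi_i$ at layer $n$ on input $x_0\in\R^p$ and by $y_n$ the activation of $\Psi$ at layer $n$, the base case $n=1$ follows from the definition of $V_1,C_1$ together with the fact that $\mathbf{A}_{k_1}$ acts componentwise, giving $y_1=(x_1^{(1)},\ldots,x_1^{(M)})^\top$. The inductive step uses that block-diagonality of $V_n$ and componentwise action of the ReLU preserve the block decomposition, so $y_n=(x_n^{(1)},\ldots,x_n^{(M)})^\top$ for every $n\le H$. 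Finally, evaluating the affine output map yields
\begin{align*}
(\mathcal{R}(\Psi))(x_0)=V_{H+1}y_H+C_{H+1}=\sum_{i=1}^M h_i\bigl(W_{H+1}^{(i)}x_H^{(i)}+B_{H+1}^{(i)}\bigr)=\sum_{i=1}^M h_i f_i(x_0).
\end{align*}

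\textbf{Expected difficulty.} There is no conceptual obstacle; the entire argument is a bookkeeping construction. The only delicate point is being careful with the two asymmetric layers, namely the first (where the common input $x_0\in\R^p$ is broadcast to all $M$ sub-networks rather than stacked, reflecting the asymmetry in the definition of $\boxplus$ which keeps $\alpha_0$ rather than summing) and the last (where the $M$ parallel $\R^q$-valued outputs are collapsed by an $h_i$-weighted horizontal concatenation rather than stacked). Making sure the resulting matrix shapes match the prescribed $\boxplus_{i=1}^M k_i$ entry by entry is the only thing that requires any attention.
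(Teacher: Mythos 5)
Your construction is correct, and it is essentially the standard argument behind the cited result \cite[Lemma~3.9]{HJKN2020a} (whose proof the paper omits): broadcast the input in the first layer, run the $M$ networks in parallel via block-diagonal hidden layers, and collapse with an $h_i$-weighted affine output layer, exactly mirroring the stacking construction the paper writes out for the merging lemma, \cref{p11}, except for the final layer. Apart from a trivial notational slip ($k_i^{(1)}=p$, $k_i^{(H+1)}=q$ should refer to the first and last entries of $k_i$), nothing needs to change.
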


\begin{lemma}[DNNs with $H$ hidden layers for $\mathrm{Id}_{\R^d}$--{\cite[Lemma 3.6]{CHW2022}}]\label{b03}
Assume \cref{m07b} and let $d,H\in \N$.
Then we have that
$\mathrm{Id}_{\R^d}\in \mathcal{R}(\{\Phi\in\mathbf{N}\colon\mathcal{D}(\Phi)=\mathfrak{n}^d_{H+2} \}) $.
\end{lemma}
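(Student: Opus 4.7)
The core identity is $x = \max(x,0) - \max(-x,0)$, valid componentwise, i.e.\ $\mathrm{Id}_{\R^d}(x) = \mathbf{A}_d(x) - \mathbf{A}_d(-x)$. The plan is therefore to construct one explicit network $\Phi \in \mathbf{N}$ with $\calD(\Phi) = \mathfrak{n}^d_{H+2} = (d, 2d, \ldots, 2d, d) \in \N^{H+2}$ (with $H$ middle entries equal to $2d$) whose realization is $\mathrm{Id}_{\R^d}$, and to check by a short induction that intermediate hidden layers can be chosen to act trivially on the ``non-negative lift'' of $x$.

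\textbf{Construction.} I would take
\[
W_1 = \begin{pmatrix} I_d \\ -I_d \end{pmatrix}\in\R^{2d\times d},\quad B_1=0\in\R^{2d},
\]
\[
W_n = I_{2d}\in\R^{2d\times 2d},\quad B_n = 0\in\R^{2d}\quad \text{for } n\in\{2,\ldots,H\},
\]
\[
W_{H+1} = \bigl(I_d,\,-I_d\bigr)\in\R^{d\times 2d},\quad B_{H+1}=0\in\R^d,
\]
and let $\Phi = ((W_1,B_1),\ldots,(W_{H+1},B_{H+1}))$. By inspection $\calD(\Phi) = \mathfrak{n}^d_{H+2}$.

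\textbf{Verification.} For arbitrary $x_0=x\in\R^d$, the first hidden activation is
\[
x_1 = \mathbf{A}_{2d}(W_1 x_0 + B_1) = \bigl(\mathbf{A}_d(x),\,\mathbf{A}_d(-x)\bigr)\in[0,\infty)^{2d}.
\]
Since $x_1\geq 0$ componentwise and $W_n=I_{2d}$, $B_n=0$ for $n\in\{2,\ldots,H\}$, a trivial induction using $\mathbf{A}_{2d}(y)=y$ for $y\in[0,\infty)^{2d}$ gives $x_n = x_1$ for every $n\in\{1,\ldots,H\}$. Finally, using the ReLU identity cited above,
\[
(\calR(\Phi))(x) = W_{H+1} x_H + B_{H+1} = \mathbf{A}_d(x) - \mathbf{A}_d(-x) = x.
\]
Hence $\calR(\Phi) = \mathrm{Id}_{\R^d}$ and $\calD(\Phi) = \mathfrak{n}^d_{H+2}$, which proves the lemma.

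\textbf{Main obstacle.} There is no genuine technical obstacle; the only care needed is bookkeeping with the indexing convention of \cref{m07} (that a network with dimension vector of length $H+2$ has $H$ hidden layers and exactly $H+1$ affine maps, so the explicit construction must produce $H+1$ weight–bias pairs), and the observation that the chosen intermediate layers are well-defined because the image of the first hidden layer lies in the non-negative orthant, which makes the ReLU act as the identity from layer $2$ through layer $H$.
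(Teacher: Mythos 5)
Your construction is correct, and it is exactly the standard argument behind the cited result (\cite[Lemma 3.6]{CHW2022}), whose proof the paper omits: lift $x$ to $(\mathbf{A}_d(x),\mathbf{A}_d(-x))$, propagate it unchanged through the hidden layers because ReLU is the identity on the non-negative orthant, and recombine via $x=\max(x,0)-\max(-x,0)$, with the dimension vector matching $\mathfrak{n}^d_{H+2}$. No issues.
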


\begin{lemma}[{\cite[Lemma 3.7]{CHW2022}}]\label{b02}
Assume \cref{m07}, let $H,p,q\in \N$, and let $g\in C(\R^p,\R^q)$ satisfy that
$g\in \calR(\{\Phi\in \bfN\colon \dim(\calD(\Phi))=H+2\})$. Then for all
$n\in \N_0$ we have that
$g\in \calR(\{\Phi\in \bfN\colon \dim(\calD(\Phi))=H+2+n\})$.
\end{lemma}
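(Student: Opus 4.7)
The plan is to proceed by induction on $n\in\N_0$. The base case $n=0$ is exactly the hypothesis. The whole induction reduces to proving a \emph{one-layer extension} statement: whenever a continuous function $h\colon\R^p\to\R^q$ admits a DNN representation with architecture dimension $M\geq 3$, it also admits one with architecture dimension $M+1$.

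To prove this extension, I would use the elementary componentwise identity $y=\mathbf{A}_1(y)-\mathbf{A}_1(-y)$ for $y\in\R$, applied coordinatewise to vectors in $\R^q$. Concretely, given a representing DNN
\begin{align*}
\Phi=((W_1,B_1),\ldots,(W_{L+1},B_{L+1}))
\end{align*}
with $\calD(\Phi)=(k_0,k_1,\ldots,k_L,q)$, I would construct $\Phi'$ of architecture $(k_0,k_1,\ldots,k_L,2q,q)$ by keeping the first $L$ affine layers unchanged and replacing the final linear read-out with the pair of affine maps
\begin{align*}
W'_{L+1}=\begin{pmatrix}W_{L+1}\\ -W_{L+1}\end{pmatrix},\quad B'_{L+1}=\begin{pmatrix}B_{L+1}\\ -B_{L+1}\end{pmatrix},\quad W'_{L+2}=(I_q,-I_q),\quad B'_{L+2}=0.
\end{align*}
A direct unfolding of the definition of $\calR$ using the ReLU identity then shows $\calR(\Phi')(x)=\calR(\Phi)(x)=h(x)$ for every $x\in\R^{k_0}$, and by construction $\dim(\calD(\Phi'))=M+1$. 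Iterating this construction $n$ times delivers the claim for all $n\in\N_0$.

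The only real subtlety is that one cannot simply argue $h=\mathrm{Id}_{\R^q}\circ h$ together with \cref{b03} and \cref{m11b} to obtain the $+1$ extension: the composition operator $\odot$ always adds an extra ``merge layer,'' so the shallowest identity network available in the framework (one hidden layer, dimension $3$) raises the architecture dimension by $2$ rather than $1$. The ReLU-doubling trick on the output layer is precisely what bridges this $+1$ gap, and once it is established the rest of the argument is a purely formal iteration. (For $n\geq 2$ one could alternatively use \cref{b03} with $H=n-1$ hidden layers together with \cref{m11b}; combining that with the direct one-step construction for $n=1$ covers all cases, but the single uniform induction above is cleaner.)
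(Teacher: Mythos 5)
Your proof is correct. Note that the paper itself gives no proof of this lemma: it is quoted as \cite[Lemma~3.7]{CHW2022} and the proof is explicitly omitted, so there is nothing to compare line by line; your argument is a valid self-contained substitute. The one-layer extension via the coordinatewise identity $y=\mathbf{A}_1(y)-\mathbf{A}_1(-y)$, splitting the final affine map into $\bigl(\begin{smallmatrix}W_{L+1}\\-W_{L+1}\end{smallmatrix}\bigr)$, $\bigl(\begin{smallmatrix}B_{L+1}\\-B_{L+1}\end{smallmatrix}\bigr)$ followed by $(I_q,-I_q)$, is exactly the standard identity-emulation device that also underlies \cref{b03}, and iterating it gives all $n\in\N_0$. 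Your side remark is also accurate and worth keeping in mind: within this paper's calculus the composition $\odot$ inserts a merge layer, so $\mathrm{Id}_{\R^q}\circ g$ together with \cref{b03} and \cref{m11b} only yields depth increments of at least $2$, which is precisely why the direct one-layer construction (or an equivalent "extension" lemma) is needed for the increment by $1$.
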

\begin{lemma}[{\cite[Lemma 4.10]{NNW2023}}]\label{b04}
Assume \cref{m07b}. Then for all $n\in \N $, $d_0,d_1,\ldots, d_n\in \N$, 
$f_1\in C(\R^{d_1},\R^{d_{0}}), f_2\in C(\R^{d_2},\R^{d_{1}}), \ldots ,f_n\in C(\R^{d_n},\R^{d_{n-1}})$, 
 $\phi_1,\phi_2,\ldots,\phi_n\in \bfN$ 
with 
 $\forall i\in [1,n]\cap\Z\colon f_i=\calR(\phi_i)$ we have that
\begin{align}
\supnorm{\operatorname*{\odot}_{i=1}^n\mathcal{D}(\phi_i)}\leq \max\left\{
\supnorm{\calD(\phi_1)},
\supnorm{\calD(\phi_2)},\ldots,\supnorm{\calD(\phi_n)}, 2d_1,2d_2,\ldots,2d_{n-1}
\right\}.
\end{align}
\end{lemma}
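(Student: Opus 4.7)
\textbf{Proof proposal for \cref{b04}.}
I will proceed by induction on $n$, using the associativity of $\odot$ established in \cref{k43} together with the explicit recursion for $\odot$ in \cref{m07b}. The guiding observation is that the binary operation $\alpha\odot\beta$ on architecture vectors, as defined in \cref{m07b}, produces a vector every entry of which either appears in $\alpha$, appears in $\beta$, or equals the single ``gluing'' entry $\beta_{\text{last}}+\alpha_{\text{first}}$. In our composition setting, because $\phi_i$ generates $f_i\in C(\R^{d_i},\R^{d_{i-1}})$, the architecture $\calD(\phi_i)$ necessarily starts at $d_i$ and ends at $d_{i-1}$, so the gluing entry between $\calD(\phi_i)$ and $\calD(\phi_{i+1})$ is exactly $d_i+d_i=2d_i$.

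The base case $n=1$ is trivial since $\operatorname*{\odot}_{i=1}^{1}\calD(\phi_1)=\calD(\phi_1)$, and the claimed upper bound reduces to $\supnorm{\calD(\phi_1)}$. For the inductive step, suppose the statement holds for $n-1$ applied to $\phi_2,\ldots,\phi_n$ (with the corresponding dimensions $d_1,\ldots,d_n$ and composable output/input structure $f_2\circ\cdots\circ f_n\in C(\R^{d_n},\R^{d_1})$). By \cref{k43},
\begin{align*}
\operatorname*{\odot}_{i=1}^{n}\calD(\phi_i)=\calD(\phi_1)\odot\gamma,\qquad \gamma:=\operatorname*{\odot}_{i=2}^{n}\calD(\phi_i).
\end{align*}
Writing $\calD(\phi_1)=(d_1,a_1,\ldots,a_{H_1},d_0)$ and noting that $\gamma$ begins with $d_n$ and ends with $d_1$, say $\gamma=(d_n,g_1,\ldots,g_M,d_1)$, the definition of $\odot$ in \cref{m07b} gives
\begin{align*}
\calD(\phi_1)\odot\gamma=(d_n,g_1,\ldots,g_M,\,2d_1,\,a_1,\ldots,a_{H_1},d_0).
\end{align*}
Hence $\supnorm{\calD(\phi_1)\odot\gamma}\leq\max\{\supnorm{\gamma},\supnorm{\calD(\phi_1)},2d_1\}$.

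Applying the inductive hypothesis to bound $\supnorm{\gamma}\leq\max\{\supnorm{\calD(\phi_2)},\ldots,\supnorm{\calD(\phi_n)},2d_2,\ldots,2d_{n-1}\}$ and combining yields exactly the claimed bound, completing the induction. I do not anticipate a genuine obstacle here: the argument is a bookkeeping exercise whose only subtle point is correctly tracking that the first and last coordinates of $\gamma$ are $d_n$ and $d_1$ respectively, which follows by an inner induction using \cref{m07b} (each application of $\odot$ preserves the first coordinate of its right argument and the last coordinate of its left argument). The structure of $\odot$ ensures that no coordinates other than the $2d_i$'s get merged, so the final sup-norm is the maximum of the individual $\supnorm{\calD(\phi_i)}$'s and the $n-1$ doubled interface dimensions $2d_1,\ldots,2d_{n-1}$.
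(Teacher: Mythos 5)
Your proof is correct. The paper itself gives no proof of \cref{b04} — it is quoted from \cite{NNW2023} and explicitly omitted — so there is nothing in the text to compare against; your induction on $n$, using associativity from \cref{k43}, the explicit form of $\odot$ from \cref{m07b} (every entry of $\alpha\odot\beta$ is an entry of $\alpha$, an entry of $\beta$, or the single glued entry $\beta_{\mathrm{last}}+\alpha_{\mathrm{first}}$), and the observation that the hypothesis $f_i=\calR(\phi_i)\in C(\R^{d_i},\R^{d_{i-1}})$ forces each glued entry to equal $2d_i$, is exactly the standard argument and is complete, including the inner check that $\odot$ preserves the first coordinate of its right factor and the last coordinate of its left factor.
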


The following result, \cref{p11}, is a modification of \cite[Lemma 3.9]{HJKN2020a}.
\begin{lemma}[DNN representing the merging]\label{p11}
Assume \cref{m07b}, 
let $M,H,p\in \N$, and for every
$i\in [1,M]\cap\Z$ let $q_i\in \N$, $f_i\in C(\R^p,\R^{q_i})$, $k_i\in \mathbf{D}$ satisfy that
$\dim (k_i)=H+2$ and $ f_i\in\mathcal{R} (\{\Phi\in\mathbf{N}\colon \mathcal{D}(\Phi)=k_i\})$. Then $(f_1,\ldots,f_M)^{ \top}\in\mathcal{R} (\{\Phi\in\mathbf{N}\colon \mathcal{D}(\Phi)=\boxdot_{i=1}^M k_i\}) $.
\end{lemma}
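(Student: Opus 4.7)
The plan is to construct, from given DNN parameters $\Phi_i$ realising each $f_i$, an explicit merged DNN $\Phi$ whose hidden activations at each depth are exactly the concatenation of the hidden activations of the $\Phi_i$ and whose output is $(f_1,\ldots,f_M)^\top$. This is the analogue of \cref{b01b}, except that the final affine layer concatenates instead of summing, which is precisely what produces the $\boxdot$ operation at the output coordinate.

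For each $i\in [1,M]\cap \Z$ let me write
\begin{align*}
\Phi_i=\bigl((W^{(i)}_1,B^{(i)}_1),(W^{(i)}_2,B^{(i)}_2),\ldots,(W^{(i)}_{H+1},B^{(i)}_{H+1})\bigr),
\end{align*}
with $W^{(i)}_n\in\R^{(k_i)_n\times (k_i)_{n-1}}$ and $B^{(i)}_n\in\R^{(k_i)_n}$, and $(k_i)_0=p$, $(k_i)_{H+1}=q_i$. I then build $\Phi=((W_1,B_1),\ldots,(W_{H+1},B_{H+1}))$ by stacking the first layers vertically (they all share the input of dimension $p$) and by arranging all subsequent layers in block-diagonal form:
\begin{align*}
W_1=\begin{pmatrix} W^{(1)}_1\\ \vdots\\ W^{(M)}_1\end{pmatrix},\qquad
B_1=\begin{pmatrix} B^{(1)}_1\\ \vdots\\ B^{(M)}_1\end{pmatrix},
\end{align*}
and for $n\in [2,H+1]\cap\N$
\begin{align*}
W_n=\begin{pmatrix} W^{(1)}_n & & \\ & \ddots & \\ & & W^{(M)}_n\end{pmatrix},\qquad
B_n=\begin{pmatrix} B^{(1)}_n\\ \vdots\\ B^{(M)}_n\end{pmatrix}.
\end{align*}
A direct read-off shows $\calD(\Phi)=(p,\sum_{i=1}^M (k_1)_1,\ldots,\sum_{i=1}^M (k_i)_H,\sum_{i=1}^M q_i)=\boxdot_{i=1}^M k_i$.

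The core verification is then a short induction on the layer index. If $x_0\in\R^p$ and, for each $i$, $x^{(i)}_n$ denotes the post-activation at layer $n$ of $\Phi_i$ on input $x_0$, I claim that the post-activation $x_n$ of $\Phi$ at layer $n\in [1,H]\cap\N$ equals the concatenation $(x^{(1)}_n,\ldots,x^{(M)}_n)^\top$. For $n=1$ this holds because $W_1 x_0+B_1$ stacks the vectors $W^{(i)}_1 x_0+B^{(i)}_1$ and $\mathbf{A}$ acts componentwise. For the inductive step the block-diagonality of $W_n$ together with the componentwise action of $\mathbf{A}$ give the same concatenation property. Applying the last (affine, non-activated) layer then yields
\begin{align*}
(\calR(\Phi))(x_0)=W_{H+1}x_H+B_{H+1}=\bigl(W^{(1)}_{H+1}x^{(1)}_H+B^{(1)}_{H+1},\ldots,W^{(M)}_{H+1}x^{(M)}_H+B^{(M)}_{H+1}\bigr)^\top=(f_1(x_0),\ldots,f_M(x_0))^\top,
\end{align*}
which is exactly the asserted realisation with the desired architecture.

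I do not expect any serious obstacle: the construction is elementary linear algebra and the proof is essentially bookkeeping, closely parallel to the proof of \cref{b01b}. The only point requiring minor care is the very first layer, where the weight matrices must be stacked vertically rather than placed block-diagonally (because the input is shared); this is what makes the $0$-th coordinate of $\boxdot$ equal $p$ rather than $Mp$, in contrast to the dimensions at hidden layers.
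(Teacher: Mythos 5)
Your construction (vertical stacking of the first layer, block-diagonal stacking of all subsequent layers, induction on the layer index showing the hidden activations concatenate) is exactly the paper's own proof of \cref{p11}, and your verification of the output layer and of $\calD(\Phi)=\boxdot_{i=1}^M k_i$ matches it. The proposal is correct; only the trivial typo $\sum_{i=1}^M (k_1)_1$ (which should read $\sum_{i=1}^M (k_i)_1$) needs fixing.
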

\begin{proof}[Proof of \cref{p11}]
Let $\phi_i\in \mathbf{N}$, $i\in [1,M]\cap \Z$, and $k_{i,n}\in \N$,
$i\in [1,M]\cap\Z$, $n\in [0,H+1]\cap\Z$, 
satisfy for all $i\in [1,M]\cap\Z$ that
$\mathcal{D}(\phi_i)=k_i=(k_{i,0},\ldots,k_{i,H+1})$
and $\mathcal{R}(\phi_i)=f_i$, for every $i\in [1,M]\cap\Z$ let 
$((W_{i,1},B_{i,1}),\ldots,(W_{i,H+1},B_{i,H+1}) )\in \prod_{n=1}^{H+1}
\R^{k_{i,n}\times k_{i,n-1} } \times \R^{k_{i,n}}
$ satisfy that
$\phi_i = ((W_{i,1},B_{i,1}),\ldots, (W_{i,H+1},B_{i,H+1}) )$,
let $k_n^\boxdot\in \N$, $n\in [1,H+1]\cap\N$, $k^\boxdot\in\N^{H+2} $
satisfy for all $n\in [1,H+1]\cap\N$ that
\begin{align}
k^\boxdot_n=\sum_{i=1}^{M}k_{i,n} \quad\text{and}\quad 
k^\boxdot=(p,k_1^\boxdot,\ldots, k_{H+1}^\boxdot),
\end{align}
let $W_1\in \R^{k_1^\boxdot\times p}$, $B_1\in \R^{k_1^\boxdot}$
satisfy that

\begin{align}
W_1= \begin{pmatrix}
W_{1,1}\\
W_{2,1}\\
\vdots\\
W_{M,1}
\end{pmatrix}, B_1=\begin{pmatrix}
B_{1,1}\\
B_{2,1}\\
\vdots\\
B_{M,1}
\end{pmatrix},
\end{align}
let $W_n\in \R^{k_n^{\boxdot}\times k_{n-1}^{\boxdot}}$,
$B_n\in \R^{k_n^{\boxdot}}$, $n\in [2,H+1]\cap\Z$, satisfy for all
$n\in [2,H+1]\cap\Z$ that

\begin{align}\begin{split}
W_n= \begin{pmatrix}
W_{1,n} &	0		&	0		&	0	\\
0 		&	W_{2,n}	&	0		&	0	\\
0		&	0		&	\ddots	&	0	\\
0		&	0		&	0		&W_{M,n}
\end{pmatrix}
\quad\text{and}\quad
B_n=
\begin{pmatrix}
B_{1,n}\\
B_{2,n}\\
\vdots\\
B_{M,n}
\end{pmatrix},\end{split}\label{b01d}
\end{align}
let $x_0\in\R^p,\, x_1\in \R^{k_1^{\boxdot}},
x_2\in \R^{k_2^{\boxdot}}
\ldots,x_H\in \R^{k_H^{\boxdot}}$, 
let 
$x_{1,0},x_{2,0},\ldots,x_{M,0}\in \R^{p}$,
 $x_{i,n}\in \R^{k_{i,n}}$, $i\in [1,M]\cap\N$, $n\in [1,H]\cap\N$, satisfy 
for all $i\in [1,M]\cap\N$, $n\in [1,H]\cap\N$
that 
\begin{align}\begin{split}
&x_0=x_{1,0}=x_{2,0}=\ldots=x_{M,0},\\
&x_{i,n}=\mathbf{A}_{k_{i,n}}(W_{i,n}x_{i,n-1}+B_{i,n}),\\ 
&x_n= \mathbf{A}_{k^{\boxdot}_{n}}(W_{n}x_{n-1}+B_{n}),
\end{split}
\end{align}
and let $\psi\in \mathbf{N}$ satisfy that
\begin{align}
\psi= \left((W_1,B_1),(W_2,B_2),\ldots,(W_H,B_H),(W_{H+1},B_{H+1})\right).
\end{align}
First, 
the definitions of $\mathcal{D}$ and $\mathcal{R}$, 
the choice of $\phi_i$, and the fact that $\forall \, i\in [1,M]\cap\N\colon f_i\in C(\R^p,\R^{q_i})$ show for all $i\in [1,M]\cap\N$ that 
$k_{i,H+1}=q_i$ and
$
k_i=
(p,k_{i,1},k_{i,2},\ldots,k_{i,H+1}).
$
The definition of $\mathcal{R}$,
$\boxdot$,
and $k^\boxdot$ then show that
\begin{align}
\mathcal{D}(\psi)= (p,k_1^{\boxdot},\ldots,k_{H+1}^{\boxdot})=\boxdot_{i=1}^Mk_i.\label{b23}
\end{align}
Next, we prove by induction on $n\in [1,H]\cap\N$ that $ x_n=(x_{1,n},x_{2,n},\ldots,x_{M,n})$.
First, the definition of $(W_1,B_1)$ shows  that
\begin{align}
W_1x_0+B_1= 
\begin{pmatrix}
W_{1,1}\\
W_{2,1}\\
\vdots\\
W_{M,1}
\end{pmatrix}x_0+
\begin{pmatrix}
B_{1,1}\\
B_{2,1}\\
\vdots\\
B_{M,1}
\end{pmatrix}
=
\begin{pmatrix}
W_{1,1}x_0+B_{1,1}\\
W_{2,1}x_0+B_{2,1}\\
\vdots\\
W_{M,1}x_0+B_{M,1}
\end{pmatrix}.\label{b20}
\end{align}
This implies that
\begin{align}
x_1= \mathbf{A}_{k_1^{\boxdot}}(W_1x_0+B_1)=\begin{pmatrix}
x_{1,1}\\x_{2,1}\\\vdots\\x_{M,1}\end{pmatrix}.
\end{align}
This proves the base case. Next, for the induction step let $n\in [2,H]\cap\N$ and assume that $x_{n-1}=(x_{1,n-1},x_{2,n-1},\ldots,x_{M,n-1})$.
Then \eqref{b01d} and the induction hypothesis ensure that
\begin{align}\begin{split}
&W_nx_{n-1}+B_n
\\
&= W_{n}\begin{pmatrix}
x_{1,n-1}\\
x_{2,n-1}\\
\vdots\\
x_{M,n-1}
\end{pmatrix}+B_{n}
=\begin{pmatrix}
W_{1,n} &	0		&	0		&	0	\\
0 		&	W_{2,n}	&	0		&	0	\\
0		&	0		&	\ddots	&	0	\\
0		&	0		&	0		&W_{M,n}
\end{pmatrix}
\begin{pmatrix}
x_{1,n-1}\\
x_{2,n-1}\\
\vdots\\
x_{M,n-1}
\end{pmatrix}+
\begin{pmatrix}
B_{1,n}\\
B_{2,n}\\
\vdots\\
B_{M,n}
\end{pmatrix}
\\
&=
\begin{pmatrix}
W_{1,n}x_{1,n-1}+
B_{1,n}\\
W_{2,n}x_{2,n-1}+B_{2,n}\\
\vdots\\
W_{M,n}x_{M,n-1}+ B_{M,n}
\end{pmatrix}.\label{b21}\end{split}
\end{align}
This yields that
\begin{align}
x_{n}= \mathbf{A}_{k_n^{\boxdot}}(W_nx_{n-1}+B_n)=\begin{pmatrix}
x_{1,n}\\x_{2,n}\\\vdots\\x_{M,n}
\end{pmatrix}.
\end{align}
This proves the induction step. Induction now proves for all $n\in [1,H]\cap\N$ that
$x_n=(x_{1,n},x_{2,n},\ldots,x_{M,n})$.
This and the definition of $\mathcal{R}$ imply that
\begin{align}
&(\mathcal{R}(\psi))(x_0)=W_{H+1}x_H+B_{H+1}\nonumber\\
&=W_{H+1}\begin{pmatrix}
x_{1,H}\\
x_{2,H}\\
\vdots\\
x_{M,H}
\end{pmatrix}+B_{H+1}\nonumber
\\&=\begin{pmatrix}
W_{1,H+1} &	0		&	0		&	0	\\
0 		&	W_{2,H+1}	&	0		&	0	\\
0		&	0		&	\ddots	&	0	\\
0		&	0		&	0		&W_{M,H+1}
\end{pmatrix}
\begin{pmatrix}
x_{1,H}\\
x_{2,H}\\
\vdots\\
x_{M,H}
\end{pmatrix}+\begin{pmatrix}
B_{1,H+1}\\
B_{2,H+1}\\
\vdots\\
B_{M,H+1}
\end{pmatrix}
\nonumber\\
&=\begin{pmatrix}
W_{1,H+1}x_{1,H}+B_{1,H+1}\\
W_{2,H+1}x_{2,H}+B_{2,H+1}\\
\vdots\\
W_{M,H+1}x_{M,H}+B_{M,H+1}
\end{pmatrix}
=\begin{pmatrix}
\mathcal{R}(\phi_1)(x_{1,0})\\
\mathcal{R}(\phi_2)(x_{2,0})\\
\vdots\\
\mathcal{R}(\phi_M)(x_{M,0})\\
\end{pmatrix}=
\begin{pmatrix}
f_1(x_{0})\\
f_2(x_{0})\\
\vdots\\
f_M(x_{0})\\
\end{pmatrix}
\end{align}
The proof of \cref{p11} is thus completed.
\end{proof}
\subsection{DNN representation of  MLP approximations}
In \cref{s12} below we introduce MLP approximations for the solutions of McKean--Vlasov SDEs in the case when the drift coefficient $\mu$ is a DNN function, see \eqref{a05b}. In \cref{k12} we show that those MLP approximations can also be represented by DNNs. 
\begin{setting}\label{s12}
Assume \cref{m07}. Let 
$T\in (0,\infty)$, 
$m,d\in\N$,
$\mu \in C(\R^d\times\R^d,\R^d)$, 
$\Phi_\mu\in \bfN$,
$\Theta= \cup_{n\in\N}(\N_0)^n$
satisfy that
$\mu=\calR(\Phi_\mu)$.
Let $(\Omega,\mathcal{F},\P)$ be a  probability space, 
Let $\mathfrak{t}^\theta\colon\Omega\to[0,1]$, $\theta\in\Theta$, be i.i.d.\ random variables. Assume for all $t\in [0,1] $ that
$\P(\mathfrak{t}^0\leq t)=t$. Let $W^\theta\colon [0,T]\times\Omega\to\R^d$, $\theta\in\Theta$, be i.i.d.\ standard 
Brownian motions with continuous sample paths.
Assume that 
$(\mathfrak{t}^\theta)_{ \theta\in\Theta}$ and
$(W^\theta)_{ \theta\in\Theta}$ 
are independent. For every $x\in \R^d$
let $X^{\theta,x}_{n}\colon [0,T]\times\Omega\to\R^d$, $\theta\in\Theta$,  $n\in \N_0$, satisfy  for all
$\theta\in\Theta$, $n\in\N$, $t\in [0,T] $  that
$
X_{0}^{\theta,x}(t)=0
$
and \begin{align}\label{a05b} 
\begin{split}
&
X_{n}^{\theta,x}(t)= x+ W^\theta\left(\sup\!\left(\left\{\frac{kT}{m^n}\colon k\in\N_0\right\}\cap[0,t]\right)\right)+t\mu(0,0)\\& + \sum_{\ell=1}^{n-1}
\sum_{k=1}^{m^{n-\ell}}\Biggl[\frac{t\left[
\mu\bigl(X^{\theta,x}_{\ell}(\mathfrak{t}^{(\theta,n,k,\ell)}t),
X^{(\theta,n,k,\ell),x}_{\ell}(\mathfrak{t}^{(\theta,n,k,\ell)}t)
\bigr)-
\mu\bigl(X^{\theta,x}_{\ell-1}(\mathfrak{t}^{(\theta,n,k,\ell)}t),
X^{(\theta,n,k,\ell),x}_{\ell-1}(\mathfrak{t}^{(\theta,n,k,\ell)}t)
\bigr)\right]
}{m^{n-\ell}}
\Biggr]
.
\end{split}\end{align}
Let $\omega\in \Omega$.
\end{setting}
For the proof of \cref{k12} below we use the techniques learned from the proof of \cite[Lemma 3.10]{HJKN2020a}.
\begin{lemma}\label{k12}
Assume \cref{s12} and
 let $c\in \R$ satisfy that
\begin{align}
2c\geq \max\{4d,\supnorm{\Phi_\mu}\}.\label{k27}
\end{align}
Then for all $n\in \N_0$ there exists $(\Phi_{n,t}^\theta)_{t\in [0,T],\theta\in\Theta}\subseteq \bfN$ such that the following items hold.
\begin{enumerate}[(i)]
\item For all $t_1,t_2\in[0,T]$, $\theta_1,\theta_2\in \Theta$ we have that
$\calD(\Phi_{n,t_1}^{\theta_1})=\calD(\Phi_{n,t_2}^{\theta_2})$.
\item For all $t\in[0,T]$, $\theta\in \Theta$ we have that
$\dim(\calD( \Phi_{n,t}^\theta ))= n(\dim (\calD (\Phi_\mu))-1)+3$.
\item For all $t\in[0,T]$, $\theta\in \Theta$ we have that $\supnorm{\calD( \Phi_{n,t}^\theta )} \leq c(5m)^n$. 
\item For all $t\in[0,T]$, $\theta\in \Theta$, $x\in\R^d$ we have that
$X^{\theta,x}_{n}(t,\omega)=(\calR( \Phi_{n,t}^\theta ))(x)$.
\end{enumerate}
\end{lemma}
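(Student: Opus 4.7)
The plan is to proceed by induction on $n \in \N_0$. For the base case $n=0$, we construct a DNN with dimension vector $(d, d, d)$ and all weights and biases set to zero; its realization is the zero function on $\R^d$, matching $X_0^{\theta,x}(t,\omega) = 0$, and its width $d$ satisfies the bound $c(5m)^0 = c$ since $c \geq 2d$ by \eqref{k27}. The dimension vector has length $3 = 0 \cdot (\dim(\calD(\Phi_\mu))-1) + 3$, and independence from $t, \theta$ is obvious.

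For the inductive step, suppose the claim holds for levels $0, 1, \ldots, n-1$. Viewing $X_n^{\theta,x}(t, \omega)$ as a function of $x$, the recursion \eqref{a05b} decomposes it as the identity $x$, plus a constant vector $W^\theta(\cdot)(\omega) + t\mu(0,0) \in \R^d$, plus, for each $(\ell, k) \in ([1, n-1] \cap \N) \times ([1, m^{n-\ell}] \cap \N)$, the scaled difference $\frac{t}{m^{n-\ell}}\bigl[\mu(X_\ell^{\theta,x}(s), X_\ell^{(\theta,n,k,\ell),x}(s)) - \mu(X_{\ell-1}^{\theta,x}(s), X_{\ell-1}^{(\theta,n,k,\ell),x}(s))\bigr]$ where $s = \mathfrak{t}^{(\theta,n,k,\ell)}(\omega) t$. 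The identity is realized at any prescribed depth with width $2d$ via \cref{b03}; each $\mu(X_\ell^{\theta,x}(s), X_\ell^{(\theta,n,k,\ell),x}(s))$ is realized by first merging the two IH-provided DNNs (identical in architecture by IH claim (i)) into a DNN $\R^d \to \R^{2d}$ using \cref{p11}, then composing with $\Phi_\mu$ via \cref{m11b}. With $L := \dim(\calD(\Phi_\mu))$, such a composition has length $(\ell+1)(L-1) + 3$, maximised when $\ell = n-1$ to give $n(L-1)+3$; all remaining summands are padded to this common length using \cref{b02}, and then combined (with scalar multiples $\pm t/m^{n-\ell}$ and with the constant absorbed into the output bias via \cref{p01}) using \cref{b01b}.

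Claim (i) is immediate since the architecture depends only on the combinatorial structure of \eqref{a05b}, not on $t$ or $\theta$; claim (ii) follows from the length arithmetic above; claim (iv) holds by construction. The main technical obstacle is claim (iii), the width bound. For each $(\ell, k)$ summand, \cref{b04} shows that composing $\Phi_\mu$ with the merged pair yields width at most $\max\{\supnorm{\calD(\Phi_\mu)}, 2c(5m)^\ell, 4d\} \leq 2c(5m)^\ell$ thanks to \eqref{k27}, and padding by identity DNNs of width $2d \leq 2c$ does not worsen this. Since $\boxplus$ adds widths layerwise, the total width obeys
\begin{align*}
\supnorm{\calD(\Phi_{n,t}^\theta)} \leq 2d + \sum_{\ell=1}^{n-1} m^{n-\ell}\bigl(2c(5m)^\ell + 2c(5m)^{\ell-1}\bigr),
\end{align*}
and an elementary geometric-series estimate bounds the right-hand side by $\tfrac{3}{5}c(5m)^n + 2d \leq c(5m)^n$ for $n \geq 1$ (using $c \geq 2d$ and $(5m)^n \geq 5$). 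The delicate part is the bookkeeping of widths through merging, composing, padding, and summing, and the exponential factor $(5m)^n$ is precisely what the geometric sample growth $m^{n-\ell}$ combined with the width doubling from $\boxdot$ forces; the assumption $2c \geq \max\{4d, \supnorm{\Phi_\mu}\}$ is tight enough to close the induction.
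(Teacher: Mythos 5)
Your proposal is correct and takes essentially the same route as the paper's proof: strong induction on $n$, representing the affine part $x\mapsto x+W^\theta(\cdot,\omega)+t\mu(0,0)$ via \cref{b03,p01}, realizing each summand by merging the two induction-hypothesis networks (\cref{p11}) and composing with $\Phi_\mu$ (\cref{m11b}) with identity padding to equalize depths, summing with \cref{b01b}, and closing the width bound with \cref{b04} plus a geometric-series estimate against $c(5m)^n$. The only slight imprecision is invoking \cref{b02} for the depth padding, which by itself gives no control of $\supnorm{\cdot}$; your subsequent width bookkeeping implicitly uses padding by explicit identity networks (\cref{b03} composed in via \cref{m11b}, of width $2d$ or $4d\leq 2c$), which is exactly the paper's mechanism, so the argument still closes.
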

\begin{proof}
[Proof of \cref{k12}]
We prove this result by induction 
on $n\in \N_0$. 
Throughout this proof let the notation in \cref{m07b} be given.
First, the fact that the zero function can be represented by a DNN of arbitrary length proves the base case $n=0$. For the induction step $\N_0\ni n\mapsto n+1\in \N$ let $n\in \N_0$, 
$(\Phi_{\ell,t}^\theta)_{\ell\in [0,n]\cap\Z, t\in [0,T],\theta\in\Theta}\subseteq \bfN$ satisfy 
 for all $t,t_1,t_2\in[0,T]$, $x\in \R^d$, $\theta,\theta_1,\theta_2\in \Theta$, $\ell\in [0,n]\cap\Z$  that
\begin{align} \begin{split}& \calD(\Phi_{\ell,t_1}^{\theta_1})=\calD(\Phi_{\ell,t_2}^{\theta_2}),
\quad\dim(\calD( \Phi_{\ell,t}^\theta ))= \ell(\dim (\calD (\Phi_\mu))-1)+3,
\\&
\supnorm{\calD( \Phi_{\ell,t}^\theta )} \leq c(5m)^\ell,\quad 
X^{\theta,x}_{\ell}(t,\omega)=(\calR( \Phi_{\ell,t}^\theta ))(x).
\end{split}
\label{k29}
\end{align}
First,
\cref{b03} and
\cref{p01} show for all $t\in [0,T]$, $\theta\in\Theta$ that
\begin{align} \begin{split} 
&
\left[
\R^d\ni x\mapsto
x+ W^\theta\left(\sup\!\left(\left\{\frac{kT}{m^{n+1}}\colon k\in\N_0\right\}\cap[0,t]\right),\omega\right)+t\mu(0,0)\in \R^d
\right]\\
&\in \calR\left(\Bigl\{
\Phi\in\bfN\colon 
\calD(\Phi)=\mathfrak{n}^d_{
(n+1)(\dim (\calD (\Phi_\mu))-1)+3}
\Bigr\}\right).
\end{split}\label{k30}\end{align}
Next, observe that \eqref{k07} implies that
\begin{align}
\dim (\mathfrak{n}^d_{
(n+1)(\dim (\calD (\Phi_\mu))-1)+3})=
(n+1)(\dim (\calD (\Phi_\mu))-1)+3.
\end{align}
Moreover, the fact that $\mu=\calR(\Phi_\mu)$, \eqref{k29}, \cref{m11b},  \cref{p11}, and the definition of $\odot$ and $\boxdot$ 
show for all $t\in [0,T]$, $\theta\in\Theta$ that
\begin{align} \begin{split} 
&
\left[
\R^d\ni x\mapsto
\mu\bigl(X^{\theta,x}_{n}(\mathfrak{t}^{(\theta,n+1,k,n)}(\omega)t,\omega),
X^{(\theta,n+1,k,n),x}_{n}(\mathfrak{t}^{(\theta,n+1,k,n)}(\omega)t,\omega)
\bigr)\in \R^d
\right]\\&\in \calR\left(\Bigl\{
\Phi\in\bfN\colon\calD(\Phi)=
 \calD(\Phi_\mu)\odot\left(\calD(\Phi_{n,0}^0)\boxdot\calD(\Phi_{n,0}^0)\right)
\Bigr\}\right)
\end{split}
\end{align} and
\begin{align} 
&
\dim\!\left(
\calD(
\Phi_\mu)\odot\left(\calD(\Phi_{n,0}^0)\boxdot\calD(\Phi_{n,0}^0)\right)\right)\nonumber\\&=
\dim(\calD(\Phi_\mu))+\dim\!\left(\calD(\Phi_{n,0}^0)\boxdot\calD(\Phi_{n,0}^0)  \right)-1\nonumber
\\
&=\dim(\calD(\Phi_\mu))+
\dim\!\left(\calD\!\left(\Phi_{n,0}^0\right)  \right)-1\nonumber\\
&
=\dim(\calD(\Phi_\mu))+\left[\xeqref{k29}n(\dim (\calD (\Phi_\mu))-1)+3\right]-1\nonumber\\&=
(n+1)(\dim (\calD (\Phi_\mu))-1)+3.
\end{align}
Next, the fact that $\mu=\calR(\Phi_\mu)$, \eqref{k29}, \cref{m11b}, \cref{b03}, \cref{p11}, \eqref{k07}, and the definition of $\odot$ and $\boxdot$
show for all $t\in [0,T]$, $\theta\in\Theta$, $\ell\in [1,n-1]\cap\Z$ that
\begin{align} 
&\left[
\R^d\ni x\mapsto
\mu\!\left(X^{\theta,x}_{\ell}(\mathfrak{t}^{(\theta,n+1,k,\ell)}(\omega)t,\omega),
X^{(\theta,n+1,k,\ell),x}_{\ell}(\mathfrak{t}^{(\theta,n+1,k,\ell)}(\omega)t,\omega)
\right)\in\R^d\right]\nonumber
\\
&=\Bigl[
\R^d\ni x\mapsto
(\mu\circ \mathrm{Id}_{\R^{2d}})
\! \left(X^{\theta,x}_{\ell}(\mathfrak{t}^{(\theta,n+1,k,\ell)}(\omega)t,\omega),
X^{(\theta,n+1,k,\ell),x}_{\ell}(\mathfrak{t}^{(\theta,n+1,k,\ell)}(\omega)t,\omega)
\right)\in\R^d\Bigr]\nonumber
\\
&\in \calR\Bigl(\Bigl\{\Phi\in\bfN\colon
\calD(\Phi)=
\calD(\Phi_\mu)\odot
\mathfrak{n}^{2d}_{(n+1-\ell)(\dim (\calD (\Phi_\mu))-1)
-\dim (\calD (\Phi_\mu))+2}
\odot\left(\calD(\Phi_{\ell,0}^0)\boxdot\calD(\Phi_{\ell,0}^0)\right) \Bigr\}\Bigr)
\end{align}
and
\begin{align}
&
\dim \!\left(
\calD(\Phi_\mu)\odot
\mathfrak{n}^{2d}_{(n+1-\ell)(\dim (\calD (\Phi_\mu))-1)
-\dim (\calD (\Phi_\mu))+2}
\odot\left(\calD(\Phi_{\ell,0}^0)\boxdot\calD(\Phi_{\ell,0}^0)\right)\right)\nonumber\\
&=\dim\!\left( \calD(\Phi_\mu) \right)
+\dim\!\left(
\mathfrak{n}^{2d}_{(n+1-\ell)(\dim (\calD (\Phi_\mu))-1)
-\dim (\calD (\Phi_\mu))+2}\right) +\dim \!\left(\calD(\Phi_{\ell,0}^0)\boxdot\calD(\Phi_{\ell,0}^0)\right)-2\nonumber\\
&=\dim\!\left( \calD(\Phi_\mu) \right)
+\left[(n+1-\ell)(\dim (\calD (\Phi_\mu))-1)
-\dim (\calD (\Phi_\mu))+2\right] +
\dim \!\left(\calD(\Phi_{\ell,0}^0)\right)-2\nonumber
\\
&=
\dim\!\left( \calD(\Phi_\mu) \right)
+(n+1-\ell)(\dim (\calD (\Phi_\mu))-1)
-\dim (\calD (\Phi_\mu))+2\nonumber\\&\quad +
\left[\xeqref{k29}
\ell(\dim (\calD (\Phi_\mu))-1)+3\right]
-2\nonumber\\
&=
(n+1)(\dim (\calD (\Phi_\mu))-1)+3.
\end{align}
In addition, the fact that $\mu=\calR(\Phi_\mu)$, \eqref{k29}, \cref{m11b}, \cref{b03}, \cref{p11}, \eqref{k07}, and the definition of $\odot$ and $\boxdot$
show for all $t\in [0,T]$, $\theta\in\Theta$, $\ell\in [1,n]\cap\Z$ that
\begin{align} 
&\left[
\R^d\ni x\mapsto
\mu\bigl(X^{\theta,x}_{\ell-1}(\mathfrak{t}^{(\theta,n+1,k,\ell)}(\omega)t,\omega),
X^{(\theta,n+1,k,\ell),x}_{\ell-1}(\mathfrak{t}^{(\theta,n+1,k,\ell)}(\omega)t,\omega)
\bigr)\in\R^d\right]\nonumber\\
&=
\Bigl[
\R^d\ni x\mapsto
(
\mu\circ \mathrm{Id}_{\R^{2d}})\bigl(X^{\theta,x}_{\ell-1}(\mathfrak{t}^{(\theta,n+1,k,\ell)}(\omega)t,\omega),
X^{(\theta,n+1,k,\ell),x}_{\ell-1}(\mathfrak{t}^{(\theta,n+1,k,\ell)}(\omega)t,\omega)
\bigr)\in\R^d\Bigr]\nonumber\\
&\in \calR\Bigl(\Bigl\{\Phi\in\bfN\colon
\calD(\Phi)=
\calD(\Phi_\mu)\odot
\mathfrak{n}^{2d}_{(n+2-\ell)(\dim (\calD (\Phi_\mu))-1)
-\dim (\calD (\Phi_\mu))+2}
\odot\left(\calD(\Phi_{\ell-1,0}^0)\boxdot\calD(\Phi_{\ell-1,0}^0)\right) \Bigr\}\Bigr)
\end{align}
and
\begin{align} 
&
\dim\!\left(
\calD\!\left(
\Phi_\mu\odot
\mathfrak{n}^{2d}_{(n+2-\ell)(\dim (\calD (\Phi_\mu))-1)
-\dim (\calD (\Phi_\mu))+2}
\odot\left(\Phi_{\ell-1,0}^0\boxdot\Phi_{\ell-1,0}^0\right)\right) \right)\nonumber
\\
&=\dim\!\left( \calD(\Phi_\mu) \right)
+\dim\!\left(
\mathfrak{n}^{2d}_{(n+2-\ell)(\dim (\calD (\Phi_\mu))-1)
-\dim (\calD (\Phi_\mu))+2}\right) +\dim \!\left(\calD(\Phi_{\ell-1,0}^0)\boxdot\calD(\Phi_{\ell-1,0}^0)\right)-2\nonumber\\
&=\dim\!\left( \calD(\Phi_\mu) \right)
+\left[\xeqref{k07}
(n+2-\ell)(\dim (\calD (\Phi_\mu))-1)
-\dim (\calD (\Phi_\mu))+2
\right]
+\dim \!\left(\calD(\Phi_{\ell-1,0}^0)\right)-2\nonumber\\
&=\dim\!\left( \calD(\Phi_\mu) \right)
+\left[
(n+2-\ell)(\dim (\calD (\Phi_\mu))-1)
-\dim (\calD (\Phi_\mu))+2
\right]\nonumber\\&\quad 
+
\left[\xeqref{k29}
(\ell-1)(\dim (\calD (\Phi_\mu))-1)+3
\right]
-2\nonumber\\
&=
(n+1)(\dim (\calD (\Phi_\mu))-1)+3.
\label{k37}
\end{align}
Now,
\eqref{k30}--\eqref{k37} and \cref{b01b} show that there exists  $(\Phi_{n+1,t}^\theta)_{t\in[0,T],\theta\in \Theta}\subseteq\bfN$ such that for all $t\in[0,T]$, $\theta\in \Theta $, $x\in\R^d$ we have that
\begin{align}
&(\calR(\Phi_{n+1,t}^\theta))(x)\nonumber\\
&=
x+ W^\theta\left(\sup\!\left(\left\{\frac{kT}{m^{n+1}}\colon k\in\N_0\right\}\cap[0,t]\right),\omega\right)+t\mu(0,0)\nonumber\\
&\quad +\sum_{k=1}^{m}\frac{t
\mu\!\left(X^{\theta,x}_{n}(\mathfrak{t}^{(\theta,n+1,k,n)}(\omega)t,\omega),
X^{(\theta,n+1,k,n),x}_{n}(\mathfrak{t}^{(\theta,n+1,k,n)}(\omega)t,\omega)
\right)}{m}\nonumber\\
&
\quad +\sum_{\ell=1}^{n-1}
\sum_{k=1}^{m^{n+1-\ell}}
\frac{t\mu\bigl(X^{\theta,x}_{\ell}(\mathfrak{t}^{(\theta,n+1,k,\ell)}(\omega)t,\omega),
X^{(\theta,n+1,k,\ell),x}_{\ell}(\mathfrak{t}^{(\theta,n+1,k,\ell)}(\omega)t,\omega)
\bigr)}{m^{n+1-\ell}}\nonumber\\
&\quad 
-\sum_{\ell=1}^{n}\sum_{k=1}^{m^{n+1-\ell}}\frac{t\mu\bigl(X^{\theta,x}_{\ell-1}(\mathfrak{t}^{(\theta,n+1,k,\ell)}(\omega)t,\omega),
X^{(\theta,n+1,k,\ell),x}_{\ell-1}(\mathfrak{t}^{(\theta,n+1,k,\ell)}(\omega)t,\omega)
\bigr)}{m^{n+1-\ell}}\nonumber\\
&=
x+ W^\theta\left(\sup\!\left(\left\{\frac{kT}{m^{n+1}}\colon k\in\N_0\right\}\cap[0,t]\right),\omega\right)+t\mu(0,0)
\nonumber\\
&+ \sum_{\ell=1}^{n}
\sum_{k=1}^{m^{n+1-\ell}}\Biggl[\frac{t
\mu\!\left(X^{\theta,x}_{\ell}(\mathfrak{t}^{(\theta,n+1,k,\ell)} (\omega) t,\omega),
X^{(\theta,n+1,k,\ell),x}_{\ell}(\mathfrak{t}^{(\theta,n+1,k,\ell)}(\omega)t,\omega)
\right)
}{m^{n+1-\ell}}\nonumber\\
&\qquad\qquad\qquad-\frac{t\mu\!\left(X^{\theta,x}_{\ell-1}(\mathfrak{t}^{(\theta,n+1,k,\ell)}(\omega)t,\omega),
X^{(\theta,n+1,k,\ell),x}_{\ell-1}(\mathfrak{t}^{(\theta,n+1,k,\ell)}(\omega)t, \omega)
\right)}{m^{n+1-\ell}}\Biggr]\nonumber\\
&=X_{n+1}^{\theta,x}(t,\omega),
\label{k38}
\end{align}
\begin{align}
\dim (\calD (\Phi_{n+1,t}^\theta))=(n+1)(\dim (\calD (\Phi_\mu))-1)+3,\label{k39}
\end{align}
and
\begin{align}  
&
\calD(\Phi^\theta_{n+1,t})\nonumber\\&=\mathfrak{n}^{2d}_{(n+1)(\dim (\calD (\Phi_\mu))-1)+3}
\boxplus\left(\operatorname*{\boxplus}_{k=1}^m\calD(\Phi_\mu)\odot \left(\calD(\Phi^0_{n,t})\boxdot\calD(\Phi^0_{n,t})
\right)\right)\nonumber\\
&\quad \boxplus
\biggl(\operatorname*{\boxplus}_{\ell=1}^{n-1}
\operatorname*{\boxplus}_{k=1}^{m^{n+1-\ell}}
\calD(\Phi_\mu)\odot \mathfrak{n}^{2d}_{(n+1-\ell)(\dim (\calD (\Phi_\mu))-1)
-\dim (\calD (\Phi_\mu))+2}
\odot
\left(\calD(\Phi^0_{\ell,t})\boxdot\calD(\Phi^0_{\ell,t})
\right)
\biggr)\nonumber
\\
&
\quad \boxplus
\biggl(\operatorname*{\boxplus}_{\ell=1}^{n}
\operatorname*{\boxplus}_{k=1}^{m^{n+1-\ell}}
\calD(\Phi_\mu)\odot \mathfrak{n}^{2d}_{(n+2-\ell)(\dim (\calD (\Phi_\mu))-1)
-\dim (\calD (\Phi_\mu))+2}
\odot
\left(\calD(\Phi^0_{\ell-1,t})\boxdot\calD(\Phi^0_{\ell-1,t})
\right)
\biggr).\label{k40}
\end{align}
Hence, for all $t_1,t_2\in[0,T]$, $\theta_1,\theta_2\in \Theta$ we have that
\begin{align}
\calD(\Phi^{\theta_1}_{n+1,t_1})
=
\calD(\Phi^{\theta_2}_{n+1,t_2}).\label{k41}
\end{align}
Next, \cref{b04}, the definition of 
$\boxdot$, \eqref{k29}, and \eqref{k27} show for all $t\in[0,T]$ that
\begin{align} 
\supnorm{\calD(\Phi_\mu)\odot \left(\calD(\Phi^0_{n,t})\boxdot\calD(\Phi^0_{n,t})
\right)}&\leq \max\!\left\{
4d,\supnorm{\calD(\Phi_\mu)},
\supnorm{\calD(\Phi^0_{n,t})\boxdot\calD(\Phi^0_{n,t})}
\right\}\nonumber\\
&
\leq \max\!\left\{
4d,\supnorm{\calD(\Phi_\mu)},
2
\supnorm{\calD(\Phi^0_{n,t})}
\right\}\nonumber\\&\leq 2c(5m)^n.
\label{k42}
\end{align}
Moreover, \cref{b04}, 
\eqref{k07},
the definition of 
$\boxdot$, \eqref{k29}, and \eqref{k27} show for all $t\in[0,T]$, $\ell\in [1,n-1]\cap\Z$ that
\begin{align}
&
\supnorm{
\calD(\Phi_\mu)\odot \mathfrak{n}^{2d}_{(n+1-\ell)(\dim (\calD (\Phi_\mu))-1)
-\dim (\calD (\Phi_\mu))+2}
\odot
\left(\calD(\Phi^0_{\ell,t})\boxdot\calD(\Phi^0_{\ell,t})
\right)}\nonumber\\
&\leq 
\max\Bigl\{
4d,\supnorm{\calD(\Phi_\mu)},\supnorm{
\mathfrak{n}^{2d}_{(n+1-\ell)(\dim (\calD (\Phi_\mu))-1)
-\dim (\calD (\Phi_\mu))+2}},
\supnorm{\calD(\Phi^0_{\ell,t})\boxdot\calD(\Phi^0_{\ell,t})}
\Bigr\}\nonumber\\
&\leq 
\max\!\left\{
4d,\supnorm{\calD(\Phi_\mu)},
2
\supnorm{\calD(\Phi^0_{\ell,t})}
\right\}\leq 2c(5m)^\ell.
\end{align}
In addition, \cref{b04}, 
\eqref{k07},
the definition of 
$\boxdot$, \eqref{k29}, and \eqref{k27} show for all $t\in[0,T]$, $\ell\in [1,n]\cap\Z$ that
\begin{align}
 &
\supnorm{
\calD(\Phi_\mu)\odot \mathfrak{n}^{2d}_{(n+2-\ell)(\dim (\calD (\Phi_\mu))-1)
-\dim (\calD (\Phi_\mu))+2}
\odot
\left(\calD(\Phi^0_{\ell-1,t})\boxdot\calD(\Phi^0_{\ell-1,t})
\right)}\nonumber\\
&\leq 
\max\Bigl\{
4d,\supnorm{\calD(\Phi_\mu)},\supnorm{
\mathfrak{n}^{2d}_{(n+2-\ell)(\dim (\calD (\Phi_\mu))-1)
-\dim (\calD (\Phi_\mu))+2}},
\supnorm{\calD(\Phi^0_{\ell-1,t})\boxdot\calD(\Phi^0_{\ell-1,t})}
\Bigr\}\nonumber\\
&\leq 
\max\!\left\{
4d,\supnorm{\calD(\Phi_\mu)},
2
\supnorm{\calD(\Phi^0_{\ell-1,t})}
\right\}\leq 2c(5m)^{\ell-1}.
 \label{k44}
\end{align}
Next, \eqref{k40} and 
the triangle inequality show for all $\theta\in\Theta$,
$t\in [0,T]$ that
\begin{align} 
&
\supnorm{\calD(\Phi^\theta_{n+1,t})}\nonumber\\&\leq 
\supnorm{\mathfrak{n}^{2d}_{(n+1)(\dim (\calD (\Phi_\mu))-1)+3}}
+\left(\sum_{k=1}^m\supnorm{\calD(\Phi_\mu)\odot \left(\calD(\Phi^0_{n,t})\boxdot\calD(\Phi^0_{n,t})
\right)}\right)\nonumber\\
&\quad +
\Biggl(\sum_{\ell=1}^{n-1}
\sum_{k=1}^{m^{n+1-\ell}} \supnorm{
\calD(\Phi_\mu)\odot \mathfrak{n}^{2d}_{(n+1-\ell)(\dim (\calD (\Phi_\mu))-1)
-\dim (\calD (\Phi_\mu))+2}
\odot
\left(\calD(\Phi^0_{\ell,t})\boxdot\calD(\Phi^0_{\ell,t})
\right)}
\Biggr)\nonumber
\\
&
\quad +
\Biggl(\sum_{\ell=1}^{n}
\sum_{k=1}^{m^{n+1-\ell}}\supnorm{
\calD(\Phi_\mu)\odot \mathfrak{n}^{2d}_{(n+2-\ell)(\dim (\calD (\Phi_\mu))-1)
-\dim (\calD (\Phi_\mu))+2}
\odot
\left(\calD(\Phi^0_{\ell-1,t})\boxdot\calD(\Phi^0_{\ell-1,t})
\right)}
\Biggr).
\end{align}
This, \eqref{k07}, \eqref{k42}--\eqref{k44}, and  \eqref{k27} show for all $\theta\in\Theta$,
$t\in [0,T]$ that
\begin{align} \supnorm{\calD(\Phi^\theta_{n+1,t})}&\leq 
4d+\left(\sum_{k=1}^{m}2c(5m)^n\right)+\left(\sum_{\ell=1}^{n-1}\sum_{k=1}^{m^{n+1-\ell}}
2
c(5m)^\ell\right)+
\left(
\sum_{\ell=1}^{n}\sum_{k=1}^{m^{n+1-\ell}}
2c(5m)^{\ell-1}\right)\nonumber\\
&\leq 2c+2c\cdot 5^nm^{n+1}+
\left( \sum_{\ell=1}^{n-1}m^{n+1-\ell}2c\cdot 5^\ell m^\ell\right)
+\left(
\sum_{\ell=1}^{n}
m^{n+1-\ell}2c\cdot 5^{\ell-1} m^{\ell-1}\right)
\end{align} and
\begin{align}
  \supnorm{\calD(\Phi^\theta_{n+1,t})}
&\leq 
2cm^{n+1}\left(\sum_{\ell=0}^{n}5^\ell\right)+2cm^n\left(\sum_{\ell=1}^{n}5^{\ell-1}\right)\leq 2cm^{n+1}
\frac{5^{n+1}-1}{5-1}
+2cm^n\frac{5^{n}-1}{5-1}\nonumber\\&\leq c(5m)^{n+1}.
\end{align}
This, \eqref{k38}, \eqref{k39}, and \eqref{k41} complete the induction step. Induction hence completes the proof of \cref{k12}. 
\end{proof}
\subsection{DNN representation of Monte Carlo approximations}
In \cref{s14} below we consider the Monte Carlo approximation
$\frac{1}{K}\sum_{i=1}^{K}f(X^{i,x}_n(T))$ which approximates
$\E [f(X^{x}(T) )]$ where $X^x$ is the solution of the corresponding SDE. We will show that when $f$ is a DNN function then the Monte Carlo approximation
$\frac{1}{K}\sum_{i=1}^{K}f(X^{i,x}_n(T))$ is also a DNN function.
\begin{lemma}\label{s14}
Assume \cref{s12}, 
let $f\colon C(\R^d,\R)$, $\Phi_f\in \bfN$ satisfy that
$f=\mathcal{R}(\Phi_f)$, 
 and
 let $c\in \R$ satisfy that
\begin{align}
c\geq \max\{4d,\supnorm{\Phi_\mu},\supnorm{\Phi_f}\}.\label{k27b}
\end{align}
Then for all 
$n\in \N_0$, $K\in \N$ there exists $\Psi_{K,n}\in\bfN$
such that for all 
$x\in \R^d$ we have that
\begin{align}
\dim(\calD(\Psi_{K,n}))=
\dim(\calD(\Phi_f))+n(\dim (\calD (\Phi_\mu))-1)+2,
\end{align}
\begin{align}
\supnorm{\calD(\Psi_{K,n})}\leq Kc(5m)^n,
\end{align}
and
\begin{align} \begin{split}
 (\calR(\Psi_{K,n}))(x)= \frac{1}{K}\sum_{i=1}^{K}f(X^{i,x}_n(T)).
\end{split}\end{align}
\end{lemma}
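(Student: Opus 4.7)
The approach is to assemble $\Psi_{K,n}$ by applying the DNN calculus from Lemmas \ref{m11b}, \ref{b01b}, and \ref{b04} to the MLP representations produced by Lemma \ref{k12}. First, since assumption \eqref{k27b} yields $2c\geq\max\{4d,\supnorm{\Phi_\mu}\}$, I can invoke Lemma \ref{k12} at time $t=T$ with $\theta=i$ for each $i\in\{1,2,\ldots,K\}\subseteq\Theta$ to obtain DNNs $\Phi_{n,T}^{i}\in\bfN$ satisfying $(\calR(\Phi_{n,T}^{i}))(x)=X_{n}^{i,x}(T,\omega)$, and moreover (by item (i) of Lemma \ref{k12}) sharing a common architecture $\calD(\Phi_{n,T}^{i})$ whose length equals $n(\dim(\calD(\Phi_\mu))-1)+3$ and whose width is bounded by $c(5m)^n$.

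Second, I would compose each $\Phi_{n,T}^{i}$ with $\Phi_{f}$ via Lemma \ref{m11b}. This gives DNNs with dimension vector $\calD(\Phi_f)\odot\calD(\Phi_{n,T}^{i})$ whose realizations equal the maps $\R^d\ni x\mapsto f(X_n^{i,x}(T,\omega))\in\R$. By the definition of $\odot$, the length of this composed vector equals $\dim(\calD(\Phi_f))+n(\dim(\calD(\Phi_\mu))-1)+2$, matching the dimension claim. For the width, Lemma \ref{b04} with intermediate dimension $d$ gives the bound $\max\{\supnorm{\calD(\Phi_f)},\supnorm{\calD(\Phi_{n,T}^{i})},2d\}\leq c(5m)^n$, where the last inequality uses \eqref{k27b} together with $n\geq 0$.

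Third, since all $K$ composed networks share the same architecture, Lemma \ref{b01b} applies with $h_i=1/K$ to produce a single DNN $\Psi_{K,n}\in\bfN$ satisfying
\begin{align}
(\calR(\Psi_{K,n}))(x)=\tfrac{1}{K}\sum_{i=1}^{K}f(X_n^{i,x}(T,\omega))\quad\text{and}\quad \calD(\Psi_{K,n})=\operatorname*{\boxplus}_{i=1}^{K}\bigl(\calD(\Phi_f)\odot\calD(\Phi_{n,T}^{i})\bigr).
\end{align}
The operation $\boxplus$ preserves length, so the dimension formula is inherited from the previous step. The only widths affected by the sum are the hidden layers, which get multiplied by at most $K$; the input dimension remains $d$ and the output dimension remains $1$. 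Hence
\begin{align}
\supnorm{\calD(\Psi_{K,n})}\leq\max\{d,\,K\cdot c(5m)^n,\,1\}\leq Kc(5m)^n,
\end{align}
where the last inequality again uses $c\geq 4d$ from \eqref{k27b}.

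The only points requiring care are (a) verifying that the architectures of the $K$ composed networks actually coincide (so that Lemma \ref{b01b} is applicable), which is immediate from item (i) of Lemma \ref{k12}, and (b) handling the intermediate dimension $d$ correctly when invoking Lemma \ref{b04}. Beyond this, the proof is pure bookkeeping with the DNN operations $\odot$, $\boxplus$, and $\boxdot$; there is no substantial analytic obstacle.
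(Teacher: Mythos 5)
Your proposal is correct and follows essentially the same route as the paper: invoke Lemma~\ref{k12} (its hypothesis holds since \eqref{k27b} gives $2c\geq\max\{4d,\supnorm{\Phi_\mu}\}$), compose with $\Phi_f$ via Lemma~\ref{m11b}, control the width of the composition with Lemma~\ref{b04}, and form the average with Lemma~\ref{b01b}, using that all $K$ composed networks share the same architecture by item~(i) of Lemma~\ref{k12}. The only cosmetic difference is that the paper works with the common architecture $\calD(\Phi_{n,0}^0)$ and bounds the $\boxplus$-sum via the triangle inequality (Lemma~\ref{b15}) rather than your direct layer-by-layer count, which is immaterial.
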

\begin{proof}
[Proof of \cref{s14}]
The assumption of \cref{s14} and \cref{k12} show that for all $n\in \N_0$ there exists $(\Phi_{n,t}^\theta)_{t\in [0,T],\theta\in\Theta}\subseteq \bfN$ such that
 for all $t, t_1,t_2\in[0,T]$, $\theta,\theta_1,\theta_2\in \Theta$ we have that
\begin{align} \begin{split} 
&
\calD(\Phi_{n,t_1}^{\theta_1})=\calD(\Phi_{n,t_2}^{\theta_2}),\quad
\dim(\calD( \Phi_{n,t}^\theta ))= n(\dim (\calD (\Phi_\mu))-1)+3,\\
& 
 \supnorm{\calD( \Phi_{n,t}^\theta )} \leq c(5m)^n, \quad 
X^{\theta,x}_{n}(t,\omega)=(\calR( \Phi_{n,t}^\theta ))(x).\end{split}
\end{align}
This,  \cref{m11b}, \cref{b01b}, the definition of $\odot$, the triangle inequality (cf. \cref{b15}), \cref{b04}, and \eqref{k27b} show that 
for all $n\in \N_0$, $K\in \N$ there exists $\Psi_{K,n}\in \bfN$
such that for all 
$x\in\R^d$ we have that
\begin{align}
\calD(\Psi_{K,n})
=\operatorname*{\boxplus}_{i=1}^K\calD(\Phi_f)\odot
\calD(\Phi_{n,0}^0),
\end{align}
\begin{align}
\dim(\calD(\Psi_{K,n}))&=
\dim(\calD(\Phi_f))+\dim(\calD(\Phi_{n,0}^0))-1
\nonumber\\&
=\dim(\calD(\Phi_f))+n(\dim (\calD (\Phi_\mu))-1)+3-1\nonumber\\
&=\dim(\calD(\Phi_f))+n(\dim (\calD (\Phi_\mu))-1)+2,
\end{align}
\begin{align} 
\supnorm{\calD(\Psi_{K,n})}
&=\supnorm{\operatorname*{\boxplus}_{i=1}^K\calD(\Phi_f)\odot
\calD(\Phi_{n,0}^0)}\nonumber
\\&\leq K\supnorm{\calD(\Phi_f)\odot
\calD(\Phi_{n,0}^0)}\nonumber
\\&\leq K\max\{2d,\supnorm{\calD(\Phi_f)},\supnorm{\calD(\Phi_{n,0}^0)}\}\nonumber\\
&\leq K\max\{2d,\supnorm{\calD(\Phi_f)},c(5m)^n\}\nonumber\\&\leq Kc(5m)^n
\end{align}
and 
\begin{align}
(\calR(\Psi_{K,n}))(x)=  \frac{1}{K}\sum_{i=1}^{K}f(X^{i,x}_n(T)).
\end{align}
This completes the proof of \cref{s14}.
\end{proof}

\section{DNN approximations of the expectations}\label{s02}
In this section we provide the proof of   \cref{g01} and \cref{g01b}. The idea of the proof is the following.
First, we introduce an MLP setting (cf. \eqref{a05d}). Then we split the error into $3$ terms as in \eqref{x90} where the first term is estimated by an MLP error (cf. \cite[Theorem 3.1]{HKN2022}), the second term is estimated by a standard property of the variance, and the third term is estimated by the perturbation lemma, \cref{a01}.
\begin{theorem}\label{g01}
Assume \cref{m07}. Let $T\in(0,\infty)$,
$c\in [1,\infty)$,
$r\in\N$.  For every
$d\in \N$, $\varepsilon\in [0,1)$ let $\mu_{d,\varepsilon}\in C(\R^d\times \R^d,\R^d)$,
$f_{d,\varepsilon}\in C(\R^d,\R)$.
For every
$d\in \N$, $\varepsilon\in (0,1)$ let
$\Phi_{\mu_{d,\varepsilon}},\Phi_{f_{d,\varepsilon}}\in \bfN$
satisfy that
$\mu_{d,\varepsilon}=\calR(\Phi_{\mu_{d,\varepsilon}}) $ and
$f_{d,\varepsilon}=\calR(\Phi_{f_{d,\varepsilon}}) $.
Assume for all 
$d\in \N$,
$\varepsilon\in (0,1)$, $x,y,x_1,x_2,y_1,y_2\in \R^d$  that
\begin{align}\label{g02}
\left\lVert
\mu_{d,\varepsilon}(x_1,y_1)-
\mu_{d,\varepsilon}(x_2,y_2)\right\rVert\leq 0.5c\lVert x_1-y_1\rVert+0.5c \lVert x_2-y_2\rVert
\end{align}
\begin{align}\label{g03}
\left\lVert
\mu_{d,\varepsilon}(x_1,y_1)-
\mu_{d,0}(x_1,y_1)\right\rVert\leq cd^c\varepsilon+ 0.5cd^c\varepsilon\lVert x_1\rVert^r+
0.5cd^c\varepsilon\lVert y_1\rVert^r,
\end{align}
\begin{align}
\left\lvert
f_{d,\varepsilon}(x)-f_{d,0}(x)
\right\rvert\leq \varepsilon(1+ \lVert x\rVert^r),\quad \left \lvert f_{d,\varepsilon}( x )-f_{d,\varepsilon}(y)\right\rvert\leq c\lVert x-y\rVert,\label{g04b}
\end{align}
\begin{align}
\left\lvert
f_{d,\varepsilon}(0)
\right\rvert+
1+T
\left\lVert\mu_{d,\varepsilon}(0,0)\right\rVert
+\sqrt{T(d+4r)}\leq cd^c,\label{g05}
\end{align}
\begin{align}
\max \!\left\{\dim(\calD(\Phi_{f_{d,\varepsilon}})),\dim (\calD (\Phi_{\mu_{d,\varepsilon}}))\right\}\leq d^c\varepsilon^{-c},\label{k85}
\end{align} and
\begin{align}
\max\!\left\{\supnorm{\calD(\Phi_{\mu_{d,\varepsilon}})},
\supnorm{\calD(\Phi_{f_{d,\varepsilon}})}\right\}\leq d^c\varepsilon^{-c}.\label{k87}
\end{align}
Then the following items hold.
\begin{enumerate}[i)]
\item\label{i01} There exists a probability space $(\Omega,\mathcal {F}, \P)$, a family of standard Brownian motions $W^d\colon [0,T]\times \Omega\to \R^d$, $d\in \N$, and  $X=(X^{d,x}(t))_{d\in\N,x\in\R^d,t\in [0,T]}$ such that for every $d\in \N$, $ (X^{d,x}(t))_{t\in [0,T]} $ 
is a
$(\sigma( (W_s)_{s\in[0,t]} ))_{t\in[0,T]}$-adapted stochastic process with continuous sample paths
 and satisfies for all $t\in [0,T]$, $m\in [1,\infty)$ that
$
\E\!\left[
\sup_{s\in [0,T]}\left\lvert
X^{d,x}(s)\right\rvert^m\right]<\infty
$ and $\P$-a.s.\
\begin{align} \begin{split} X^{d,x}(t)=x+\int_{0}^{t}\E \!\left[\mu_{d,0}\bigl(y,X^{d,x}(s)\bigr)
\right]\bigr|_{y=X^{d,x}(s)}\,ds+W^{d}(t).\end{split}
\end{align}
\item\label{i02} There exist $(C_\delta)_{\delta\in (0,1)}\subseteq (0,\infty)$ and $(\Psi_{d,\epsilon}) _{d\in\N,\epsilon\in \N}\subseteq\bfN$ 
such that for all $d\in \N$, $\delta,\epsilon\in (0,1)$ we have that
$
\calP(\Psi_{d,\epsilon})\leq 96d^{3c}\left((2cd^c)^{r+1}e^{(r+2)cT}\right)^{3c+8+\delta}C_\delta\epsilon^{-(3c+8+\delta)}
$ and
\begin{align}
\left(
\int_{[0,1]^d}
\left\lvert
(\calR(\Psi_{d,\epsilon}))(x)
-\E\!\left[f_{d,0}(X^{d,x}(T))\right]\right\rvert^2 dx
\right)^\frac{1}{2}
<\epsilon.
\end{align}
\end{enumerate}

\end{theorem}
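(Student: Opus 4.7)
The plan is to build the approximating DNN $\Psi_{d,\epsilon}$ by combining, at the DNN proxies $\mu_{d,\varepsilon}$ and $f_{d,\varepsilon}$ of the true coefficients, a multilevel Picard (MLP) approximation of the McKean--Vlasov SDE with a Monte Carlo average, and then to tune the proxy accuracy $\varepsilon$, the MLP iteration depth $n$, the branching factor $m$, and the Monte Carlo size $K$ so that the total $L^2([0,1]^d)$ error falls below $\epsilon$ while the parameter count stays polynomial in $d$ and $\epsilon^{-1}$. Item (i) is immediate from \cref{a01b} applied dimension-by-dimension.

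For item (ii), I would set up the MLP scheme of \cref{s12} with the smoothed drift $\mu_{d,\varepsilon}$ to obtain the random processes $X^{\varepsilon,\theta,x}_n$. By \cref{k12}, each $\omega$-realization $x\mapsto X^{\varepsilon,\theta,x}_n(T,\omega)$ is generated by a DNN of length $\dim(\calD(\Phi_{\mu_{d,\varepsilon}}))\cdot O(n)$ and width $O(d^c\varepsilon^{-c}(5m)^n)$, where the bound on the ambient constant uses \eqref{k87}. Composing with the DNN $\Phi_{f_{d,\varepsilon}}$ and averaging $K$ independent copies via \cref{s14} produces a single DNN $\Psi_{d,\epsilon}$ with $(\calR(\Psi_{d,\epsilon}))(x)=\frac{1}{K}\sum_{i=1}^K f_{d,\varepsilon}(X^{\varepsilon,i,x}_n(T,\omega))$ whose total parameter count is polynomially bounded in $K$, $m^n$, and $d^c\varepsilon^{-c}$.

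The core of the argument is the pointwise (in $x$) error decomposition
\begin{align*}
\E[f_{d,0}(X^{d,x}(T))]-(\calR(\Psi_{d,\epsilon}))(x)
&=\bigl(\E[f_{d,0}(X^{d,x}(T))]-\E[f_{d,0}(X^{0,0,x}_{n}(T))]\bigr)\\
&\quad+\bigl(\E[f_{d,0}(X^{0,0,x}_{n}(T))]-\tfrac{1}{K}\textstyle\sum_{i=1}^{K}f_{d,0}(X^{0,i,x}_{n}(T))\bigr)\\
&\quad+\tfrac{1}{K}\textstyle\sum_{i=1}^{K}\bigl(f_{d,0}(X^{0,i,x}_{n}(T))-f_{d,\varepsilon}(X^{\varepsilon,i,x}_{n}(T))\bigr).
\end{align*}
I would bound the first summand using the MLP convergence result \cite[Theorem~3.1]{HKN2022}, which yields a decay in $n$ like $d^c(cT)^n/\sqrt{n!}$ with a polynomial-in-$d$ prefactor. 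The middle summand has mean zero over the independent realizations, so a variance computation followed by taking the $L^2(\P)$ norm and then the $L^2([0,1]^d)$ norm gives a Monte Carlo error of order $d^c/\sqrt{K}$. The third summand is exactly what the perturbation lemma \cref{a01} (applied with $b=cd^c$ via \eqref{g03}--\eqref{g05}) controls: its $L^p(\P)$ norm is $b\varepsilon$ multiplied by a polynomial in $d$ and in $\|x\|$, which integrates over $[0,1]^d$ to a polynomial in $d$.

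Finally I would choose $m$ fixed (e.g.\ $m=8$), $n=\lceil\kappa\log(1/\epsilon)\rceil$ so that the MLP term is $<\epsilon/3$, $K=\lceil\eta d^{2c}\epsilon^{-2}\rceil$ so that the Monte Carlo term is $<\epsilon/3$, and $\varepsilon=\epsilon/(3b\cdot\mathrm{poly}(d))$ so that the perturbation term is $<\epsilon/3$. Since $(5m)^{2n}=\epsilon^{-2\kappa\log(5m)}$ and $\max\{\supnorm{\calD(\Phi_{\mu_{d,\varepsilon}})},\supnorm{\calD(\Phi_{f_{d,\varepsilon}})},\dim(\calD(\Phi_{\mu_{d,\varepsilon}})),\dim(\calD(\Phi_{f_{d,\varepsilon}}))\}\leq d^c\varepsilon^{-c}$, the parameter count from \cref{s14} stays polynomial in $d$ and $\epsilon^{-1}$ with the stated exponent $3c+8+\delta$ after absorbing the $(cT)^n/\sqrt{n!}$ factor into an $\epsilon^{-\delta}$ slack for arbitrarily small $\delta>0$. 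The main technical nuisance will be the bookkeeping of exponents: the exponent $c$ controlling $\Phi_{\mu_{d,\varepsilon}}$, the exponent picked up by $(5m)^{2n}$, and the additional $d^c$ factors from the perturbation lemma must be combined carefully to hit the precise polynomial rate claimed in the statement.
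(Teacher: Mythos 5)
Your overall architecture (MLP iterates driven by the DNN drift, a Monte Carlo average, DNN representation via \cref{k12} and \cref{s14}, then parameter counting) is the paper's, but your error decomposition places the perturbation step where the available tools do not reach. Your third summand compares the MLP iterates built from $\mu_{d,0},f_{d,0}$ with the MLP iterates built from $\mu_{d,\varepsilon},f_{d,\varepsilon}$; \cref{a01} says nothing about this, since it compares the \emph{exact} McKean--Vlasov solutions $X^{\varepsilon,x}$ and $X^{0,x}$, not the iterates of the recursion \eqref{a05b}. To control your term you would need a separate stability estimate for the MLP scheme under perturbation of the drift (a discrete Gr\"onwall argument over the levels together with moment bounds for the iterates), which neither you nor the paper provides. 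In addition, your first summand invokes \cite[Theorem 3.1]{HKN2022} for the drift $\mu_{d,0}$, whose Lipschitz property must first be extracted from \eqref{g02}--\eqref{g03} by letting $\varepsilon\to0$. The paper avoids both issues by ordering the telescoping as in \eqref{x90}: the MLP-versus-exact error and the Monte Carlo error are both taken with the fixed smoothed drift $\mu_{d,\varepsilon}$, and \cref{a01} is applied only to $\E[f_{d,\varepsilon}(X^{d,0,\varepsilon,x}(t))]-\E[f_{d,0}(X^{d,0,0,x}(t))]$. You should also make explicit the selection of a good realization $\omega_{d,\epsilon}$ from the $L^2(\P\otimes dx)$ bound (as in \eqref{k82}), since $\Psi_{d,\epsilon}$ must be a deterministic network.

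The second gap is the parameter choice. The MLP error bound carries the factor $e^{3cTn}$ (cf.\ \eqref{g72}) against the decay $m^{-n/2}$, so with $m=8$ fixed the bound does not even tend to zero unless $e^{3cT}<\sqrt{8}$, i.e.\ roughly $cT<0.35$; for general $T$ and $c\ge 1$ your choice ``$m$ fixed, $n\sim\log(1/\epsilon)$'' fails outright. Taking $m$ fixed but large (of order $e^{6cT}$, say) restores convergence, but then the width factor $(5m)^{n}\sim\epsilon^{-\kappa\log(5m)}$ carries an exponent growing linearly in $cT$, so you cannot reach the claimed bound $96d^{3c}\bigl((2cd^c)^{r+1}e^{(r+2)cT}\bigr)^{3c+8+\delta}C_\delta\,\epsilon^{-(3c+8+\delta)}$, whose exponent of $\epsilon^{-1}$ does not depend on $T$. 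The paper instead couples the parameters, $m=n=N_{d,\epsilon}$ and $K=\lvert N_{d,\epsilon}\rvert^{N_{d,\epsilon}}$ with $N_{d,\epsilon}$ defined by minimality in \eqref{a61}; the superexponential decay $n^{-n/2}$ then beats $e^{3cTn}$, and the minimality inequality \eqref{k89} together with the finiteness of the supremum $C_\delta$ in \eqref{k90} is precisely what converts the combinatorial factor $N_{d,\epsilon}5^{2N_{d,\epsilon}}\lvert N_{d,\epsilon}\rvert^{4N_{d,\epsilon}}$ into the $\epsilon^{-(8+\delta)}$ contribution. This coupling and the $C_\delta$ trick are substantive parts of the argument, not bookkeeping, and they are missing from your plan.
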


\begin{proof}
[Proof of \cref{g01}]
Let $(\Omega,\mathcal{F},\P)$ be a  probability space.
Let $\mathfrak{t}^\theta\colon\Omega\to[0,1]$, $\theta\in\Theta$, be i.i.d.\ random variables, assume for all $t\in [0,1] $ that
$\P(\mathfrak{t}^0\leq t)=t$.
For every $d\in \N$
let $W^{d,\theta}\colon [0,T]\times\Omega\to\R^d$, $\theta\in\Theta$, be i.i.d.\ standard 
Brownian motions with continuous sample paths.
Assume for every $d\in \N$ that 
$(\mathfrak{t}^\theta)_{ \theta\in\Theta}$ and
$(W^{d,\theta})_{ \theta\in\Theta}$ 
are independent. For every 
$d\in \N$, $\varepsilon\in (0,1)$,
$x\in \R^d$
let $X^{d,\theta,\varepsilon,x}_{n,m}\colon [0,T]\times\Omega\to\R^d$, $\theta\in\Theta$,  $n\in \N_0$, satisfy  for all
$\theta\in\Theta$, $m,n\in\N$, $t\in [0,T] $  that
$
X_{0,m}^{d,\theta,\varepsilon,x}(t)=0
$
and \begin{align}\label{a05d}  
\begin{split}
&X_{n,m}^{d,\theta,\varepsilon,x}(t)= x+ W^{d,\theta}\left(\sup\!\left(\left\{\frac{kT}{m^n}\colon k\in\N_0\right\}\cap[0,t]\right)\right)+t\mu_{d,\varepsilon}(0,0)\\&+ \sum_{\ell=1}^{n-1}
\sum_{k=1}^{m^{n-\ell}}\Biggl[\frac{t
\mu_{d,\varepsilon}\bigl(X^{d,\theta,\varepsilon,x}_{m,\ell}(\mathfrak{t}^{(\theta,n,k,\ell)}t),
X^{d,(\theta,n,k,\ell),\varepsilon,x}_{\ell}(\mathfrak{t}^{(\theta,n,k,\ell)}t)
\bigr)
}{m^{n-\ell}}
\\
&\qquad\qquad\qquad\qquad-
\frac{t
\mu_{d,\varepsilon}\bigl(X^{d,\theta,\varepsilon,x}_{\ell-1}(\mathfrak{t}^{(\theta,n,k,\ell)}t),
X^{d,(\theta,n,k,\ell),\varepsilon,x}_{\ell-1}(\mathfrak{t}^{(\theta,n,k,\ell)}t)
\bigr)}{m^{n-\ell}}\Biggr].
\end{split}\end{align}
First,
\cref{a01b} and \eqref{g02} imply that 
for every $d\in \N$, $\theta\in \Theta$, $\varepsilon\in [0,1)$, $x\in \R^d$
there exists a unique 
$(\sigma( (W^{d,\theta}(s))_{s\in[0,t]} ))_{t\in[0,T]}$-adapted 
stochastic process 
$(X^{d,\theta,\varepsilon,x}(t))_{t\in [0,T]}$
such that
for all $t\in [0,T]$
we have $\P$-a.s.\ that
\begin{align}  X^{d,\theta,\varepsilon,x}(t)=x+\int_{0}^{t}\E \!\left[\mu_{d,\varepsilon}\bigl(y,X^{d,\theta,\varepsilon,x}(s)\bigr)
\right]\bigr|_{y=X^{d,\theta,\varepsilon,x}(s)}\,ds+W^{d,\theta}(t)\label{k73}
\end{align}
and 
such that for all $m\in [1,\infty)$ we have that
\begin{align}
\E\!\left[
\sup_{s\in [0,T]}\left\lvert
X^{d,\theta,\varepsilon,x}(t)\right\rvert^m\right]<\infty.
\label{k74}
\end{align}
This proves \eqref{i01}.

Next,
\eqref{k73}, \eqref{k74},
\cref{a01} (applied for every $d\in\N$, $\theta\in \Theta$ with 
$T\gets T$, $c\gets c $, $b\gets cd^c$, $d\gets d$, $p\gets 2$, $r\gets r$,
$(\mu_\varepsilon)_{\varepsilon\in [0,1)}
\gets
(\mu_{d,\varepsilon})_{\varepsilon\in [0,1)}$,
$(f_\varepsilon)_{\varepsilon\in [0,1)}
\gets
(f_{d,\varepsilon})_{\varepsilon\in [0,1)}$,
$(\Omega,\mathcal{F},\P)\gets(\Omega,\mathcal{F},\P)$,
$(X^{\varepsilon,x}(t))_{x\in\R^d,\varepsilon\in[0,1), t\in [0,T]}\gets 
(X^{d,\theta,\varepsilon,x}(t))_{x\in\R^d,\varepsilon\in[0,1), t\in [0,T]}$
in the notation of \cref{a01}), and \eqref{g05} show
for all $d\in \N$, $\theta\in \Theta$, $\varepsilon\in (0,1)$, $x\in \R^d$, $t\in [0,T]$ that
\begin{align} \begin{split} 
\left\lVert X^{d,\theta,\varepsilon,x}(t)\right\rVert_{L^{2r}(\R;\R^d)}
&\leq \left(\lVert x\rVert
+T\lVert \mu_{d,\varepsilon}(0,0) \rVert+\sqrt{T(d+4r)}
\right)e^{cT}
\label{g70}\end{split}
\end{align}
and 
\begin{align}
&
\left\lVert f_{d,\varepsilon}(X^{d,\theta,\varepsilon,x}(t))
-
f_{d,0}(X^{d,\theta,0,x}(t))\right\rVert_{L^2(\P;\R)}\nonumber\\
&\leq 
cd^c\varepsilon \left(1+
\lVert x\rVert+T
\max\{
\left\lVert\mu_{d,\varepsilon}(0,0)\right\rVert,
\left\lVert\mu_{d,0}(0,0)\right\rVert
\}
+\sqrt{T(d+4r)}\right)^re^{(r+2)cT}\nonumber\\
&\leq\xeqref{g05}c d^c\varepsilon \left(
\lVert x\rVert+cd^c
\right)^r e^{(r+2)cT}
.\label{g74}
\end{align}
Moreover, the triangle inequality,
the fact that 
$\forall\,d\in\N, \theta\in\Theta,\varepsilon\in [0,1),t\in[0,T], x\in\R^d,n\in\N\colon \P\!\left( (X^{d,\theta,\varepsilon,x}_n(t),
X^{d,\theta,\varepsilon,x}(t))\in\cdot 
\right)=
\P\!\left( (X^{d,0,\varepsilon,x}_n(t),
X^{d,0,\varepsilon,x}(t))\in\cdot 
\right)
$, \eqref{g04b}, \cite[Theorem 3.1]{HKN2022}
(applied for every $d\in\N$, $\varepsilon\in [0,1)$, $x\in \R^d$ with
$T\gets T$, $L\gets c$, $d\gets d$, $\xi\gets x$, 
$\mu\gets \mu_{d,\varepsilon}$, $\Theta\gets\Theta$,
$\lVert\cdot \rVert\gets \lVert\cdot \rVert$,
$(\Omega,\mathcal{F},\P)\gets(\Omega,\mathcal{F},\P)$,
$(\mathfrak{u}^\theta)_{\theta\in\Theta}\gets (\mathfrak{t}^\theta)_{\theta\in\Theta}$,
$({W}^\theta)_{\theta\in\Theta}\gets ({W}^{d,\theta})_{\theta\in\Theta}$, 
$X\gets X^{d,0,\varepsilon,x}$,
$(X^\theta_{n,m})_{m\in\N,n\in\N_0,\theta\in\Theta}\gets (X^{d,\theta,\varepsilon,x}_{n,m})_{m\in\N,n\in\N_0,\theta\in\Theta}$ in the notation of \cite[Theorem 3.1]{HKN2022}),
\eqref{a05d}, \eqref{g02}, the fact that
$1+2cT\leq e^{2cT}$, and \eqref{g05}
show for all $d\in\N$, 
$\varepsilon\in [0,1)$, 
$t\in [0,T]$,
$x\in\R^d$,
$K,m,n\in\N$ that
\begin{align} 
\left\lVert
\frac{1}{K}
\sum_{i=1}^{K}\left[f_{d,\varepsilon}(X^{d,i,\varepsilon,x}_{n,m}(t))
-
f_{d,\varepsilon}(X^{d,i,\varepsilon,x}(t))\right]
\right\rVert_{L^2(\P;\R)}
&\leq \left\lVert f_{d,\varepsilon}(X^{d,0,\varepsilon,x}_{n,m}(t))
-f_{d,\varepsilon}(X^{d,0,\varepsilon,x}(t))
\right\rVert_{L^2(\P;\R)}\nonumber\\
&
\leq\xeqref{g04b} c
\left\lVert X^{d,0,\varepsilon,x}_{n,m}(t)
-X^{d,0,\varepsilon,x}(t)
\right\rVert_{L^2(\P;\R^d)}\nonumber
\\&\leq c \frac{e^{\frac{m}{2}}}{m^{\frac{n}{2}}}\left[\lVert x\rVert+\lVert \mu_{d,\varepsilon} (0,0) \rVert
t+\sqrt{Td}
\right]e^{cT}
 e^{2cTn}\nonumber\\
&\leq 
c \frac{e^{\frac{m}{2}}}{m^{\frac{n}{2}}}\left[\lVert x\rVert+\lVert \mu_{d,\varepsilon} (0,0) \rVert
t+\sqrt{Td}
\right]
 e^{3cTn}\nonumber\\&\leq
c \frac{e^{\frac{m}{2}}}{m^{\frac{n}{2}}}\left[\lVert x\rVert+cd^c
\right]
 e^{3cTn}
.
\label{g72}
\end{align}
Next, a standard variance estimate,
\eqref{g04b}, \eqref{g70}, the fact that $c\geq 1$,
and \eqref{g05}
imply for all $d\in\N$, $\theta\in\Theta$,
$\varepsilon\in [0,1)$, 
$t\in [0,T]$,
$x\in\R^d$,
$K\in\N$ that
\begin{align} 
&
\left\lVert
\frac{1}{K}
\sum_{i=1}^{K}
f_{d,\varepsilon}(X^{d,i,\varepsilon,x}(t))
-\E\!\left[f_{d,\varepsilon}(X^{d,0,\varepsilon,x}(t))\right]\right\rVert_{L^2(\P;\R)}\nonumber
\\&\leq \frac{\left\lVert
f_{d,\varepsilon}(X^{d,0,\varepsilon,x}(t))
\right\rVert_{L^2(\P;\R)}}{\sqrt{K}}\nonumber
\\
&\leq \xeqref{g04b}
 \frac{\left\lvert f_{d,\varepsilon}(0)\right\rvert+c\left\lVert
X^{d,0,\varepsilon,x}(t)
\right\rVert_{L^2(\P;\R^d)}}{\sqrt{K}}\nonumber
\\&\leq \xeqref{g70}
\frac{\left\lvert f_{d,\varepsilon}(0)\right\rvert+c\left[\lVert x\rVert +T\lVert \mu_\varepsilon(0,0)\rVert +\sqrt{T(d+4q)} \right]e^{cT}}{\sqrt{K}}\nonumber\\
&
\leq 
\frac{c\left[\left\lvert f_{d,\varepsilon}(0)\right\rvert+\lVert x\rVert +T\lVert \mu_\varepsilon(0,0)\rVert +\sqrt{T(d+4q)} \right]e^{cT}}{\sqrt{K}}\nonumber\\&\leq\xeqref{g05} \frac{c\left[\lVert x\rVert+cd^c\right]e^{cT}}{\sqrt{K}}.
\label{g73}
\end{align}
In addition, a telescoping sum argument shows for all
$d\in\N$, $\theta\in\Theta$,
$\varepsilon\in [0,1)$, 
$t\in [0,T]$,
$x\in\R^d$,
$m,n,K\in\N$
that
\begin{align} 
\frac{1}{K}
\sum_{i=1}^{K}f_{d,\varepsilon}(X^{d,i,\varepsilon,x}_{n,m}(t))
-\E\!\left[f_{d,0}(X^{d,0,0,x}(t))\right]
&
=
\frac{1}{K}
\sum_{i=1}^{K}\left[f_{d,\varepsilon}(X^{d,i,\varepsilon,x}_{n,m}(t))
-
f_{d,\varepsilon}(X^{d,i,\varepsilon,x}(t))\right]\nonumber\\
&\quad
+
\frac{1}{K}
\sum_{i=1}^{K}\left[
f_{d,\varepsilon}(X^{d,i,\varepsilon,x}(t))
-\E\!\left[f_{d,\varepsilon}(X^{d,0,\varepsilon,x}(t))\right]\right]\nonumber\\&\quad 
+
\E\!\left[f_{d,\varepsilon}(X^{d,0,\varepsilon,x}(t))\right]
-\E\!\left[f_{d,0}(X^{d,0,0,x}(t))\right].\label{x90}
\end{align}
This, the triangle inequality, \eqref{g72},  \eqref{g73}, and \eqref{g74}
imply for all
$d,m,n,K\in\N$, $\theta\in\Theta$,
$\varepsilon\in [0,1)$, 
$t\in [0,T]$
that
\begin{align}
&\left(\E \left[
\int_{[0,1]^d}
\left\lvert
\frac{1}{K}
\sum_{i=1}^{K}f_{d,\varepsilon}(X^{d,i,\varepsilon,x}_{n,m}(t))
-\E\!\left[f_{d,0}(X^{d,0,0,x}(t))\right]\right\rvert^2 dx 
\right]\right)^\frac{1}{2}\nonumber
\\
&=\left(
\int_{[0,1]^d}
\E \left[
\left\lvert
\frac{1}{K}
\sum_{i=1}^{K}f_{d,\varepsilon}(X^{d,i,\varepsilon,x}_{n,m}(t))
-\E\!\left[f_{d,0}(X^{d,0,0,x}(t))\right]\right\rvert^2 
\right]dx \right)^\frac{1}{2}\nonumber\\
&\leq\xeqref{x90}
\left(
\int_{[0,1]^d}
\E \left[
\left\lvert\frac{1}{K}
\sum_{i=1}^{K}\left[f_{d,\varepsilon}(X^{d,i,\varepsilon,x}_{n,m}(t))
-
f_{d,\varepsilon}(X^{d,i,\varepsilon,x}(t))\right]\right\rvert^2 
\right]dx \right)^\frac{1}{2}\nonumber\\
&\quad +
\left(
\int_{[0,1]^d}
\E \left[
\left\lvert\frac{1}{K}
\sum_{i=1}^{K}\left[
f_{d,\varepsilon}(X^{d,i,\varepsilon,x}(t))
-\E\!\left[f_{d,\varepsilon}(X^{d,0,\varepsilon,x}(t))\right]\right]\right\rvert^2 
\right]dx \right)^\frac{1}{2}\nonumber\\
&\quad +\left(
\int_{[0,1]^d}
\E \left[
\left\lvert\E\!\left[f_{d,\varepsilon}(X^{d,0,\varepsilon,x}(t))\right]
-\E\!\left[f_{d,0}(X^{d,0,0,x}(t))\right]\right\rvert^2 
\right]dx \right)^\frac{1}{2}\nonumber\\
&
\leq \Biggl(\int_{[0,1]^d}\Biggl[\xeqref{g72}c \frac{e^{\frac{m}{2}}}{m^{\frac{n}{2}}}\left[\lVert x\rVert+cd^c
\right]
 e^{3cTn}\Biggr]^2dx\Biggr)^{\frac{1}{2}}+ \Biggl(\int_{[0,1]^d}\Biggl[\xeqref{g73}\frac{c\left[\lVert x\rVert+cd^c\right]e^{cT}}{\sqrt{K}}
\Biggr]^2dx\Biggr)^{\frac{1}{2}}\nonumber\\
&\quad + \Biggl(\int_{[0,1]^d}\Biggl[\xeqref{g74}cd^c\varepsilon\left[\lVert x\rVert+cd^c\right]^r e^{(r+2)cT}\Biggr]^2dx\Biggr)^{\frac{1}{2}}\nonumber\\
&\leq \left(\int_{[0,1]^d}\left\lvert cd^c\left(\lVert x\rVert+cd^c\right)^r\right\rvert^2 dx\right)^\frac{1}{2}
\left(\frac{e^{\frac{m}{2}+3cTn}}{m^{\frac{n}{2}}}+\frac{e^{cT}}{\sqrt{K}}+\varepsilon
e^{(r+2)cT}
\right)
\nonumber\\
&\leq cd^c (2cd^c)^r
\left(\frac{e^{\frac{m}{2}+3cTn}}{m^{\frac{n}{2}}}+\frac{e^{cT}}{\sqrt{K}}+\varepsilon
e^{(r+2)cT}
\right)\nonumber\\
&= 2^r (cd^c)^{r+1}
\left(\frac{e^{\frac{m}{2}+3cTn}}{m^{\frac{n}{2}}}+\frac{e^{cT}}{\sqrt{K}}+\varepsilon
e^{(r+2)cT}
\right).
\label{a76}
\end{align}
For the next step for every 
$d\in \N$, $\epsilon\in (0,1)$ let $\varepsilon_{d,\epsilon}\in(0,1)$, 
$N_{d,\epsilon}\in\N$ satisfy that
\begin{align}
\varepsilon_{d,\epsilon}=\frac{\epsilon}{2^r (cd^c)^{r+1}e^{(r+2)cT}}\label{a60}
\end{align}
and
\begin{align}\label{a61}
N_{d,\epsilon}=\min\!\left\{n\in \N\cap[2,\infty)\colon2^r (cd^c)^{r+1}\frac{2e^{\frac{n}{2}+3cTn}}{n^\frac{n}{2}}\leq \frac{\epsilon}{2}\right\}.
\end{align}
Then \eqref{a76} proves
for all
$d\in\N$,
$\varepsilon\in [0,1)$
that
\begin{align} 
&\left(\E\!\left[\int_{[0,1]^d}
\left\lvert
\frac{1}{\lvert N_{d,\epsilon}\rvert^{N_{d,\epsilon}}}
\sum_{i=1}^{\lvert N_{d,\epsilon}\rvert^{N_{d,\epsilon}}}f_{d,\varepsilon_{d,\epsilon}}(X^{d,i,\varepsilon_{d,\epsilon},x}_{N_{d,\epsilon},N_{d,\epsilon}}(T))
-\E\!\left[f(X^{d,0,0,x}(T))\right]\right\rvert^2 dx\right]\right)^\frac{1}{2}\nonumber\\
&\leq \xeqref{a76}
2^r (cd^c)^{r+1}
\left(\frac{e^{\frac{N_{d,\epsilon}}{2}+3cTN_{d,\epsilon}}}{\lvert N_{d,\epsilon}\rvert^{\frac{N_{d,\epsilon}}{2}}}+\frac{e^{cT}}{\lvert N_{d,\epsilon}\rvert^{N_{d,\epsilon}}}+\varepsilon_{d,\epsilon}
e^{(r+2)cT}
\right)\nonumber\\
&\leq 
2^r (cd^c)^{r+1}
\left(\frac{2e^{\frac{N_{d,\epsilon}}{2}+3cTN_{d,\epsilon}}}{\lvert N_{d,\epsilon}\rvert^{\frac{N_{d,\epsilon}}{2}}}
+\varepsilon_{d,\epsilon}
e^{(r+2)cT}
\right)\nonumber\\
&\leq \xeqref{a60}\xeqref{a61}\frac{\epsilon}{2}+\frac{\epsilon}{2}=\epsilon.
\end{align}
Therefore, for all $d\in\N$,
$\varepsilon\in [0,1)$ there exists $\omega_{d,\epsilon}\in \Omega$ such that
\begin{align}
\int_{[0,1]^d}
\left\lvert
\frac{1}{\lvert N_{d,\epsilon}\rvert^{N_{d,\epsilon}}}
\sum_{i=1}^{\lvert N_{d,\epsilon}\rvert^{N_{d,\epsilon}}}f_{d,\varepsilon_{d,\epsilon}}(X^{d,i,\varepsilon_{d,\epsilon},x}_{N_{d,\epsilon},N_{d,\epsilon}}(T,\omega_{d,\epsilon}))
-\E\!\left[f(X^{d,0,0,x}(T))\right]\right\rvert^2 dx<\epsilon^2.\label{k82}
\end{align}
Next, 
\cref{s14} (applied for every
$d\in \N$, $\varepsilon\in (0,1)$
with
$T\gets T$, $m\gets N_{d,\epsilon}$, $d\gets d$,
$\mu\gets\mu_{d,\varepsilon}$,
$\Phi_\mu\gets\Phi_{\mu_{d,\varepsilon}}$,
$(\mathfrak{t}^\theta)_{\theta\in\Theta}\gets (\mathfrak{t}^\theta)_{\theta\in\Theta}$,
$(W^\theta)_{\theta\in\Theta}\gets(W^{d,\theta})_{\theta\in\Theta}$,
$(X^{\theta,x}_n)_{\theta\in\Theta,n\in \N_0,x\in\R^d}\gets
(X^{d,\theta,\varepsilon,x}_{n,N_{d,\epsilon}})_{\theta\in\Theta,n\in \N_0,x\in\R^d}
$, $\omega\gets\omega_{d,\epsilon}$,
$f\gets f_{d,\varepsilon}$,
$\Phi_f\gets\Phi_{f_{d,\varepsilon}}$,
$c\gets \max\!\left\{4d,\supnorm{\calD(\Phi_{\mu_{d,\varepsilon}})},
\supnorm{\calD(\Phi_{f_{d,\varepsilon}})}\right\}$,
$n\gets N_{d,\epsilon}$, 
$K\gets \lvert N_{d,\epsilon}\rvert^{N_{d,\epsilon}}$ in the notation of \cref{s14}) and
the fact that
$\forall\, d\in \N,\varepsilon\in (0,1)\colon \mu_{d,\varepsilon}=\calR(\Phi_{\mu_{d,\varepsilon}}) $ and
$f_{d,\varepsilon}=\calR(\Phi_{f_{d,\varepsilon}}) $
show that
for all 
 $d\in \N$, $\epsilon\in (0,1)$
there exists
$\Psi_{d,\epsilon}\in \bfN$
such that for all $x\in \R^d$ we have that
\begin{align}
\dim(\calD(\Psi_{d,\epsilon}))=
\dim(\calD(\Phi_{f_{d,\varepsilon_{d,\epsilon}}}))+N_{d,\epsilon}(\dim (\calD (\Phi_{\mu_{d,\varepsilon_{d,\epsilon}}}))-1)+2,
\end{align}
\begin{align} \begin{split} 
\supnorm{\calD(\Psi_{d,\epsilon})}\leq \lvert N_{d,\epsilon}\rvert^{N_{d,\epsilon}}\max\!\left\{4d,\supnorm{\calD(\Phi_{\mu_{d,\varepsilon_{d,\epsilon}}})},
\supnorm{\calD(\Phi_{f_{d,\varepsilon_{d,\epsilon}}})}\right\}(5N_{d,\epsilon})^{N_{d,\epsilon}} ,
\end{split}\end{align}
and
\begin{align}
 (\calR(\Psi_{d,\epsilon}))(x)=\frac{1}{\lvert N_{d,\epsilon}\rvert^{N_{d,\epsilon}}}
\sum_{i=1}^{\lvert N_{d,\epsilon}\rvert^{N_{d,\epsilon}}}f_{d,\varepsilon_{d,\epsilon}}(X^{d,i,\varepsilon_{d,\epsilon},x}_{N_{d,\epsilon},N_{d,\epsilon}}(T, \omega_{d,\epsilon})).\label{k01}
\end{align}
This, \eqref{k82}, \eqref{k85}, and \eqref{k87} imply for all $d\in \N$, $\epsilon\in (0,1)$ that
\begin{align}
\int_{[0,1]^d}
\left\lvert
(\calR(\Psi_{d,\epsilon}))(x)
-\E\!\left[f(X^{d,0,0,x}(T))\right]\right\rvert^2 dx<\epsilon^2,
\end{align}
\begin{align} 
&
\dim(\calD(\Psi_{d,\epsilon}))=
\dim(\calD(\Phi_{f_{d,\varepsilon_{d,\epsilon}}}))+n(\dim (\calD (\Phi_{\mu_{d,\varepsilon_{d,\epsilon}}}))-1)+2\nonumber\\
&
\leq 3N_{d,\epsilon}\max \!\left\{\dim(\calD(\Phi_{f_{d,\varepsilon_{d,\epsilon}}})),\dim (\calD (\Phi_{\mu_{d,\varepsilon_{d,\epsilon}}}))\right\}\nonumber\\
&\leq \xeqref{k85}3N_{d,\epsilon}d^c\lvert \varepsilon_{d,\epsilon}\rvert^{-c},
\label{k84}
\end{align}
and
\begin{align} 
\supnorm{\calD(\Psi_{d,\epsilon})}&\leq \lvert N_{d,\epsilon}\rvert^{N_{d,\epsilon}}\max\!\left\{4d,\supnorm{\calD(\Phi_{\mu_{d,\varepsilon_{d,\epsilon}}})},
\supnorm{\calD(\Phi_{f_{d,\varepsilon_{d,\epsilon}}})}\right\}(5N_{d,\epsilon})^{N_{d,\epsilon}}\nonumber \\
&\leq\xeqref{k87} 5^{N_{d,\epsilon}} \lvert N_{d,\epsilon}\rvert^{2N_{d,\epsilon}}4d^c\lvert\varepsilon_{d,\epsilon}\rvert^{-c}.
\label{k86}\end{align}
This, the fact that $\forall\, \Phi\in\bfN\colon \calP(\Phi)\leq 2\dim (\calD(\Phi))\supnorm{\calD(\Phi)}^2$, 
and \eqref{a60} prove for all $d\in \N$, $\epsilon\in (0,1)$ that
\begin{align} 
\calP(\Psi_{d,\epsilon})&\leq 2
\dim(\calD(\Psi_{d,\epsilon}))
\supnorm{\calD(\Psi_{d,\epsilon})}^2\nonumber\\
&
=2\left(\xeqref{k84}3N_{d,\epsilon}d^c\lvert \varepsilon_{d,\epsilon}\rvert^{-c}\right)\left(\xeqref{k86} 5^{N_{d,\epsilon}} \lvert N_{d,\epsilon}\rvert^{2N_{d,\epsilon}}4d^c\lvert\varepsilon_{d,\epsilon}\rvert^{-c}\right)^2\nonumber\\
&=96 N_{d,\epsilon}5^{2N_{d,\epsilon}}
\lvert N_{d,\epsilon}\rvert^{4N_{d,\epsilon}}
d^{3c}
\lvert\varepsilon_{d,\epsilon}\rvert^{-3c}\nonumber\\
&=
96 N_{d,\epsilon}5^{2N_{d,\epsilon}}
\lvert N_{d,\epsilon}\rvert^{4N_{d,\epsilon}}
d^{3c}\left(\xeqref{a60}\frac{\epsilon}{2^r (cd^c)^{r+1}e^{(r+2)cT}}\right)^{-3c}\nonumber\\
&=
96 N_{d,\epsilon}5^{2N_{d,\epsilon}}
\lvert N_{d,\epsilon}\rvert^{4N_{d,\epsilon}}
d^{3c}\left(2^r (cd^c)^{r+1}e^{(r+2)cT}\right)^{3c}
\epsilon^{-3c}
\label{k88}
\end{align}
For the next step let $C_\delta\in \R$, $\delta\in (0,1)$, satisfy for all $\delta\in (0,1)$ that
\begin{align}
C_\delta=\sup_{n\in [2,\infty)\cap\Z}\left[5^{2n}nn^{4n} \left(\frac{2e^{\frac{n-1}{2} +3cT(n-1)}}{(n-1)^\frac{n-1}{2}}\right)^{8+\delta}
\right].
\end{align}
Note  for all $\delta\in (0,1)$ that
\begin{align} 
C_\delta&=
\sup_{n\in [2,\infty)\cap\Z}\left[
5^{2n}n^5n^{4n-4}
\frac{\left(2e^{\frac{n-1}{2} +3cT(n-1)}\right)^{8+\delta}}{(n-1)^{4n-4}(n-1)^{\frac{\delta(n-1)}{2}}}\right]\nonumber\\
&=
\sup_{n\in [2,\infty)\cap\Z}\left[
5^{2n}n^5\left(\frac{n}{n-1}\right)^{4n-4}
\frac{\left(2e^{\frac{n-1}{2} +3cT(n-1)}\right)^{8+\delta}}{(n-1)^{\frac{\delta(n-1)}{2}}}\right]\nonumber\\
&\leq 
\sup_{n\in [2,\infty)\cap\Z}\left[
5^{2n}n^52^{4n-4}
\frac{\left(2e^{\frac{n-1}{2} +3cT(n-1)}\right)^{8+\delta}}{(n-1)^{\frac{\delta(n-1)}{2}}}\right]<\infty.
\label{k90}
\end{align}
Furthermore, \eqref{a61} shows for all $d\in \N$, $\epsilon\in (0,1)$ that
\begin{align}
\epsilon\leq\xeqref{a61} (2cd^c)^{r+1} \frac{2e^{\frac{N_{d,\epsilon}-1}{2}+3cT(N_{d,\epsilon}-1)}}{(N_{d,\epsilon}-1)^\frac{N_{d,\epsilon}-1}{2}}.
\label{k89}
\end{align}
This, \eqref{k88}, and \eqref{k90} show for all
$d\in \N$, $\delta,\epsilon\in (0,1)$ that
\begin{align} 
&
\calP(\Psi_{d,\epsilon})\epsilon^{3c+8+\delta}
\nonumber\\&=\xeqref{k88}
96 N_{d,\epsilon}5^{2N_{d,\epsilon}}
\lvert N_{d,\epsilon}\rvert^{4N_{d,\epsilon}}
d^{3c}\left(2^r (cd^c)^{r+1}e^{(r+2)cT}\right)^{3c}
\epsilon^{-3c}\cdot \epsilon^{3c+8+\delta}\nonumber\\
&=96 N_{d,\epsilon}5^{2N_{d,\epsilon}}
\lvert N_{d,\epsilon}\rvert^{4N_{d,\epsilon}}
d^{3c}\left(2^r (cd^c)^{r+1} e^{(r+2)cT}\right)^{3c}
 \epsilon^{8+\delta}\nonumber\\
&\leq 
96 N_{d,\epsilon}5^{2N_{d,\epsilon}}
\lvert N_{d,\epsilon}\rvert^{4N_{d,\epsilon}}
d^{3c}\left(2^r (cd^c)^{r+1} e^{(r+2)cT}\right)^{3c}
 \left(\xeqref{k89}(2cd^c)^{r+1} \frac{2e^{\frac{N_{d,\epsilon}-1}{2}+3cT(N_{d,\epsilon}-1)}}{(N_{d,\epsilon}-1)^\frac{N_{d,\epsilon}-1}{2}}\right)^{8+\delta}\nonumber\\
&\leq 96d^{3c}\left((2cd^c)^{r+1}e^{(r+2)cT}\right)^{3c+8+\delta} \left[
N_{d,\epsilon}5^{2N_{d,\epsilon}}
\lvert N_{d,\epsilon}\rvert^{4N_{d,\epsilon}}
\left( \frac{2e^{\frac{N_{d,\epsilon}-1}{2}+3cT(N_{d,\epsilon}-1)}}{(N_{d,\epsilon}-1)^\frac{N_{d,\epsilon}-1}{2}}\right)^{8+\delta}
\right]\nonumber\\
&\leq 96d^{3c}\left((2cd^c)^{r+1}e^{(r+2)cT}\right)^{3c+8+\delta}C_\delta<\infty
\end{align}
and hence
$
\calP(\Psi_{d,\epsilon})\leq 96d^{3c}\left((2cd^c)^{r+1}e^{(r+2)cT}\right)^{3c+8+\delta}C_\delta\epsilon^{-(3c+8+\delta)}
$.
This completes the proof of \cref{g01}.
\end{proof}
Finally, we provide the proof of \cref{g01b}.
\begin{proof}[Proof of \cref{g01b}]
First,
the fact that $\forall\,\Phi\in \bfN\colon \max \{\supnorm{\calD(\Phi)}, \dim (\calD(\Phi))\}\leq \calP(\Phi)$ and
\eqref{g09} show for all $d\in \N$, $\varepsilon\in (0,1)$ that
\begin{align}
\max \!\left\{\dim(\calD(\Phi_{f_{d,\varepsilon}})),\dim (\calD (\Phi_{\mu_{d,\varepsilon}}))\right\}\leq
\max\!\left\{\calP(\Phi_{\mu_{d,\varepsilon}}),
\calP(\Phi_{f_{d,\varepsilon}})\right\}\leq 
 d^c\varepsilon^{-c},\label{k85b}
\end{align} and
\begin{align}
\max\!\left\{\supnorm{\calD(\Phi_{\mu_{d,\varepsilon}})},
\supnorm{\calD(\Phi_{f_{d,\varepsilon}})}\right\}\leq
\max\!\left\{\calP(\Phi_{\mu_{d,\varepsilon}}),
\calP(\Phi_{f_{d,\varepsilon}})\right\}\leq  d^c\varepsilon^{-c}.\label{k87b}
\end{align}
Next,
\eqref{g08} implies that there exist $\mathbf{c}\in [c,\infty)$ such that for all $d\in \N$, $\varepsilon\in (0,\infty)$ we have that
\begin{align}
\left\lvert 
f_{d,\varepsilon}(0)
\right\rvert+
1+T
\left\lVert\mu_{d,\varepsilon}(0,0)\right\rVert
+\sqrt{T(d+4r)}\leq \mathbf{c}d^\mathbf{c}.\label{g05b}
\end{align}
Now, \eqref{g04}--\eqref{g07}, \eqref{k85b}--\eqref{g05b}, and \cref{g01} (with $c\gets \mathbf{c}$ in the notation of \cref{g01}) complete the proof of \cref{g01b}. 
\end{proof}

\section{Conclusion}
With Theorem~\ref{g01b}, we mathematically prove that neural networks can overcome the curse of dimensionality when approximating McKean-Vlasov SDEs. This hence can be seen as a motivation to develop in the future DNN based algorithms which can approximately solve McKean-Vlasov SDEs, together with their full error and complexity analysis, for which we hopefully will be able to demonstrate that the corresponding complexity of the algorithm only grows polynomially in the dimension $d$ and the reciprocal of the accuracy $\epsilon$. We believe that our theoretical result is a first step toward that goal. We refer to \cite{GMW2022,HHL2023,PW2022} for existing deep algorithms for McKean-Vlasov SDEs, but without a full convergence and complexity analysis. Moreover, for future research, it would be interesting to analyze if one can extend our Theorem~\ref{g01b} also for forward backward McKean--Vlasov SDEs.

{
\bibliographystyle{acm}
\bibliography{References}
}

\end{document}